\numberwithin{equation}{section}
\numberwithin{figure}{section}
\newtheorem{thm}{Theorem}[section]
\newtheorem{lem}[thm]{Lemma}
\newtheorem{cor}[thm]{Corollary}
\theoremstyle{definition}
\newtheorem{defn}[thm]{Definition}
\newtheorem{rem}[thm]{Remark}
\newcommand{\bb}[1]{\bm{\mathrm{#1}}}
\newcommand{\m}{\bb}
\newcommand{\mc}[1]{\m{\hat{#1}}}
\newcommand\oeis[1]{\href{https://oeis.org/#1}{#1}}
\DeclareMathOperator{\pk}{\m{pk}}
\DeclareMathOperator{\dd}{\m{dd}}
\DeclareMathOperator{\eOdis}{\m{eOdis}}
\DeclareMathOperator{\oOdis}{\m{oOdis}}
\DeclareMathOperator{\eEdis}{\m{eEdis}}
\DeclareMathOperator{\oEdis}{\m{oEdis}}
\DeclareMathOperator{\aOrpt}{\m{aOrpt}}
\DeclareMathOperator{\aErpt}{\m{aErpt}}
\newcommand{\F}{\mathcal{F}}
\newcommand{\N}{\mathbb{N}}
\newcommand{\Z}{\mathbb{Z}}
\newcommand{\A}{\mathcal{A}}
\newcommand{\B}{\mathcal{B}}
\newcommand{\s}{\mathcal{S}} % because \S is already defined to be §
\newcommand{\D}{\mathcal{D}}
\newcommand{\p}{\mathcal{P}} % well, again \P is ¶
\newcommand{\I}{\mathcal{I}}
\newcommand{\LAT}{\mathcal{L}}
\def\udots{\reflectbox{$\ddots$}}
\let\le\leqslant
\let\ge\geqslant
\let\leq\leqslant
\let\geq\geqslant
\author[1]{Jean-Luc Baril}
\author[2]{Alexander Burstein}
\author[1]{Sergey Kirgizov}
\affil[1]{\rm LIB, Université de Bourgogne Franche-Comté\protect\\
 B.P. 47 870, 21078 Dijon Cedex France\protect\\
  {\tt E-mails: \{barjl, sergey.kirgizov\}@u-bourgogne.fr
  }
\medskip
}
\affil[2]{\rm Howard University, Department of Mathematics\protect\\ Washington, DC 20059, USA\protect\\
  {\tt E-mail: aburstein@howard.edu}}
\date{\today}
\title{Pattern statistics in faro words and permutations}
\begin{document}

\maketitle

\begin{abstract}
 We study the distribution and the popularity of some patterns in {\em $k$-ary faro words}, 
i.e. words over the 
 alphabet $\{1, 2, \ldots, k\}$ obtained by interlacing 
 the letters of two nondecreasing  words of lengths differing by at most one.
 We present a bijection between these words and dispersed 
 Dyck paths (i.e. Motzkin paths with all level steps 
  on the $x$-axis)
 with a given number of peaks. We show
 how the bijection maps statistics of
 consecutive patterns of faro words 
 into linear combinations of
 other pattern statistics on paths.  Then, we deduce
 enumerative results by providing multivariate 
 generating functions for the distribution and 
 the popularity of patterns of length at most 
 three. Finally, we consider some interesting 
 subclasses of faro words that are permutations, 
 involutions, derangements, or subexcedent words.
\end{abstract}

%\tableofcontents

\section{Introduction and notations}

The faro shuffle is a well-known technique to shuffle a deck of
cards. The deck is split in two at the middle, and the
cards from the two halves are combined back by taking alternatively
the bottoms of stacks. Certain mathematical questions about the faro
shuffle are considered for example in the works of Morris~\cite{Mor}, 
Diaconis, Graham and Kantor~\cite{Dia}.
Inspired by these studies and a solid body of modern combinatorial
literature (see for instance Lothaire~\cite{lot}, Stanley~\cite{sta},
Bóna~\cite{bona} and Kitaev~\cite{Kit} books) that explores
enumerative and bijective aspects of patterns in various discrete
structures, the present paper considers an unexpectedly overlooked
combinatorial objects, which we call {\em faro words}. They are
special kind of word shuffles, which are important in several algorithmic and combinatorial settings (see for example Barnes work~\cite{barnes} and references therein).
In this paper, we present enumerative results and show how
faro words and patterns therein are related to other structures such
as Dyck paths, Motzkin paths and Dumont permutations.

\subsection{Faro words and permutations}
We deal with $k$-ary words $u_1 u_2 \ldots u_n$ over the
integer alphabet $[1,k] = \{1, 2, \ldots, k\}$ endowed with the usual
total order. A $k$-ary word is called {\em nondecreasing} if $u_i \le
u_{i+1}$ for all $i \in [1, n-1]$. 
\begin{defn}\label{faroword} For two $k$-ary words $u$ and $v$
such that $0\leq |u|-|v|\leq 1$, the {\it faro shuffle} of $u$ and $v$
is the $k$-ary word of length $|u|+|v|$ obtained by interlacing the
letters of $u$ and $v$ as follows: $u_1v_1u_2v_2u_3v_3\ldots$ A
\emph{$k$-ary faro word} is a faro shuffle of two nondecreasing
$k$-ary words. 
\end{defn}
Let $\s_{n,k}$ be the set of $k$-ary faro words of
length $n$. Its cardinality equals the product of two binomial
coefficients $\binom{\lfloor n/2 \rfloor + k-1}{k-1}\binom{\lceil n/2
  \rceil + k-1}{k-1}$, each of them being, respectively, the number of
$m$-multisets of $[1,k]$ for $m=\lfloor\frac{n}{2}\rfloor$ and
$m=\lceil\frac{n}{2}\rceil$.  For example, we have $\s_{4,2} = \{1111,
1112, 1121, \allowbreak 1122, \allowbreak 1212,\allowbreak 1222, 2121,
2122, 2222\}$ and $|\s_{4,2}|=9$. 
\begin{defn}\label{faroperm} A \emph{faro permutation} of length
 $n$ is an $n$-ary faro word of length $n$ that contains
every letter in $[1,n]$ exactly once. 
\end{defn}
Let $\p_n$ be the
set of length $n$ faro permutations. For instance, we have
$\p_3=\{123,132,213\}$. Since a faro permutation is entirely
determined by the choice of its values on the odd indices, the
cardinality of $\p_n$ is $\binom{n}{\left\lfloor n/2\right\rfloor}$.

A $k$-ary word $w = w_1 w_2 \ldots w_n$ avoids a classical pattern
(resp. consecutive pattern) $p = p_1\mbox{-} p_2\mbox{-} \cdots\mbox{-} p_k$
(resp.  $p = p_1p_2 \ldots p_k$) if there does not
 exist a strictly increasing sequence of indices $i_1
i_2 \ldots i_k$ (resp. with $i_{j+1}=i_{j}+1$ for $1\leq j\leq k-1$)
such that $w_{i_1} w_{i_2} \ldots w_{i_k}$ is order-isomorphic to $p$
(see \cite{Kit} for instance). Obviously, any faro word avoids the
classical pattern $3\mbox{-}2\mbox{-}1$. Let $Av_n(\sigma)$
denote the set of permutations avoiding a classical pattern $\sigma$,
then we have $\p_n\subseteq Av_n(3\mbox{-}2\mbox{-}1)$ for $n\geq 0$, and $\p_n\neq
Av_n(3\mbox{-}2\mbox{-}1)$ for $n\geq 3$ since $(n-1)n12\ldots (n-2)\in Av_n(3\mbox{-}2\mbox{-}1)$ is not a faro word.  Note that a faro permutation can contain all classical patterns of length $3$ except $3\mbox{-}2\mbox{-}1$ (e.g., $31425$). 
\begin{rem}\label{excpat} A $k$-ary word $w = w_1 w_2 \ldots w_n$ is a faro word if and only if $w_i \le w_{i+2}$ for any $i \in [1,n-2]$, which means that faro permutations are precisely those avoiding the three consecutive patterns $231$, $321$ and $312$.
\end{rem}

\subsection{Dyck and dispersed Dyck paths}
In order to study the distribution of patterns in faro words, we will exhibit one-to-one correspondences between these objects and some specific lattice paths in the first quadrant of the plane. Hence, we provide basic necessary definitions on lattice paths.
\begin{defn}\label{paths}
\emph{Dispersed Dyck paths} (see \cite{hac}) are lattice paths starting at $(0,0)$, ending at $(n,0)$, consisting of level steps $F=(1,0)$, up step $U = (1,1)$ and down steps $D = (1,-1)$, and never going below the $x$-axis and where all level steps are on the $x$-axis. 
\end{defn}
 Let $\B_n$ be
 the set of dispersed Dyck paths of length $n$ (or, equivalently, consisting of $n$ steps) and set $\B=\cup_{n\geq 0}\B_n$, where the empty path is denoted by $\epsilon$. A \emph{Dyck path} of semilength $n \ge 0$ is a dispersed Dyck path of length $2n$ with no level steps. Let $\D_n$ be the set of Dyck paths of semilength $n$ and let $\D = \bigcup_{n\ge0}\D_n$. Dispersed Dyck paths of length $n$ are in straightforward bijection with prefixes of Dyck paths of length $n$, also known as ballot paths~\cite{Ber,Whi}. Indeed, we can obtain a ballot path from a dispersed Dyck path by
 replacing all level steps with up steps.  Dyck and dispersed Dyck paths are counted by the Catalan and ballot numbers, respectively (see
  \oeis{A000108} and \oeis{A001405} in the Online
 Encyclopedia of Integer Sequences of N.J.A. Sloane~\cite{Slo}, where the general terms are $c_n = \frac{1}{n+1}\binom{2n}{n}$ and $b_n = \binom{n}{\lfloor n/2 \rfloor}$, respectively).

A path $P$ avoids a pattern $X$ if and only if $P$ does not contain $X$ as a sequence of consecutive steps (see for instance \cite{deu,mansour}). Note that other pattern definitions exist in the literature where steps are not necessarily consecutive~\cite{Bach}. We also need some notations similar to Kleene star and plus symbols
of formal language theory. For a nonempty pattern $X$, an occurrence of the pattern $X^+$ in a path $P$ is a maximal sequence of consecutive  repetitions of $X$, i.e. a maximal subword of the form $X^k$ for $k\geq 1$. The pattern $X^*$ will be either an empty pattern or a pattern $X^+$. More generally, for two possibly empty patterns $Y$ and $Z$ such that $Y$ does not end with $X$ and $Z$ does not start with $X$, the pattern $YX^+Z$ (resp. $YX^*Z$) corresponds to an occurrence  obtained by concatenation of $Y$, $X^+$ and $Z$ (resp. $Y$, $X^*$ and $Z$). For instance, the path $FUDUDFFUDF$ contains two occurrences of the pattern $F(UD)^+F$ and three occurrences of $F(UD)^*F$.

\subsection{Statistics on words and lattice paths}
\begin{defn} \label{statistic} 
A \emph{statistic} $\m{s}$ is an integer-valued function from a set
$\A$ of words or paths.
\end{defn}
To a given pattern $p$, we associate the
pattern statistic $\m{p} : \A \to \N$ such that $\m p(a)$ is the
number of occurrences of the pattern $p$ in the object $a \in \A$ (we
use the boldface to denote statistics). For example,
  the statistic giving the number of
  occurrences of the consecutive pattern $123$ (resp. $UDUD$) in a word (resp. a
  lattice path) is denoted by $\m{123}$ (resp. $\m {UDUD}$).
  We denote by $\mc 1$ (resp. $\mc 2, \mc n$) the constant statistic
  returning the value $1$ (resp. $2$, $n$).
 \begin{defn} The \emph{popularity} of a pattern $p$ in $\A$ is the total number of
  occurrences of $p$ over all objects of $\A$, that is
  $\m{p}(\A)=\sum_{a\in \A}\m p(a)$ (see ~\cite{des, Bon, Homb, Kit}).
  \end{defn} 
  For instance, for a dispersed Dyck path $P = FFUDFUUDUUUDDDD$ we have
  $\m{FF}(P) = 1$, $\m{DDD}(P) = 2$, $\m{UD}(P) = 3$, $\m{UUUU}(P)
  = 0$ and $\mc 1 (P) = 1$.
  Moreover, if $\A=\{UUDD,UDUD\}$ then the popularity of the
  pattern $UD$ in $\A$ is $\m{UD}(\A)=3$.

Let $\m T_\A$ be the set of all statistics defined on a set $\A$.  For any pair of statistics $\m s,\m t \in \m T_\A$, we define the statistic $\m s + \m t$ by $(\m s+\m t)(a)=\m s(a)+\m t(a)$ for any $a\in \A$, which endows $\m T_\A$ with a $\Z$-module structure. Let $\B$ be a set of combinatorial objects, and let  $\m T_\B$ be the corresponding set of statistics. We say that two statistics $\m s \in \m T_\A$ and $\m t \in \m T_\B$ have the \emph{same distribution}, or are \emph{equidistributed}, if there exists a bijection $f : \A \to \B$ such that $\m s(a) = \m t (f (a))$ for any $a \in \mathcal{A}$. In this case, with a slight abuse of the notation already used in \cite{patdd}, we write shortly  $f(\m s)=\m t$ or $\m s =\m t$ whenever $f$ is the identity. As a byproduct, for any constant statistic $\mc n$, we have  $f(\mc n) = \mc n$.

\subsection{Outline of the paper}
The paper is organized as follows. In Section \ref{sec:pat_faro_words}, we present a constructive bijection $f$ between the set $\s_{n,k}$ of $k$-ary faro words of length $n$ and the set of dispersed Dyck paths of length $n+2k-2$ with $k-1$ peaks. We show where pattern statistics are transported by $f$, which provides a more suitable ground for studying the distribution of consecutive patterns. Thus, we derive enumerating results on the distribution and popularity of patterns in $\s_{n,k}$ by giving multivariate generating functions
where the coefficient of $x^ny^kz^t$ is the number of $k$-ary faro words of length $n$ having exactly $t$ occurrences of a given pattern.  In Section \ref{sec:pat_faro_perm}, we present a similar study for faro permutations.  More precisely, we provide a bijection $g$ between $\p_n$ and the set of dispersed Dyck paths of length $n$ and show how $g$ acts on pattern statistics of length at most three. Consequently, we deduce enumerative results for the distribution and the popularity of these patterns in $\p_n$.  We also present a bijection between $\p_n$ and involutions avoiding the classical pattern $3\mbox{-}2\mbox{-}1$.  Finally, in  Section 4, we prove that the set of subexcedent words in $\s_{n,n}$ is related to ternary trees and Dumont permutations of the second kind~\cite{Bur} avoiding
the classical pattern $2\mbox{-}1\mbox{-}4\mbox{-}3$, and we show why faro involutions and faro derangements are respectively enumerated by the Fibonacci and Catalan numbers. %In the last section, further %research directions are discussed.

\section{Patterns in faro words} \label{sec:pat_faro_words}

In this section we construct a bijection $f$ between the set $\s_{n,k}$ of $k$-ary faro words of length $n$ and a subset of dispersed Dyck paths, and show how $f$ transports pattern statistics.  Then, we deduce generating functions for the distribution and popularity of some patterns.

%A $k$-ary word $w = w_1 w_2 \ldots w_n$ is a faro word if and only if $w_i \le w_{i+2}$ for any $i \in %[1,n-2]$.
A \emph{pair} in a faro word $w$ is an occurrence $w_iw_{i+1}$ with $w_i > w_{i+1}$.  Remark~\ref{excpat} implies that a letter cannot be part of two pairs since a faro word avoids the consecutive pattern $321$. A \emph{singleton} in $w$ is a letter $w_i$ not in any pair of $w$. Any faro word can be uniquely decomposed as a sequence of pairs and singletons, which are called \emph{blocks} of faro words. For instance, the block decomposition of $111212131333$ is $1^3 (21)^2 (31) 3^3$.

Let $\LAT_k$ be the set of all possible blocks of a decomposition of a $k$-ary faro word, that is
$$
\LAT_k=\{1,2,\ldots ,k\} \cup \{ji \,:\, 1\leq i<j\leq k\}.
$$

\begin{defn}\label{order} We define an order relation $\preceq$ on $\LAT_k$ as follows: for
$g,h,i,j\in\{1,2,\ldots,k\}$, 
$$\left\{\begin{array}{ll}
i \preceq j, &\mbox{ if } i \leq j,\\
i \preceq jh, &\mbox{ if } i \leq h < j,\\
ig \preceq j, &\mbox{ if } g < i \leq j,\\
 ig \preceq j h, &\mbox{ if } g<i\leq j  \mbox{ and } g\leq h<j.\\
\end{array}\right.
$$ 
\end{defn}

\begin{rem}\label{remorder} The order relation $\preceq$ can be defined less technically as follows:
for
$p,q\in\LAT_k$, 
$$
p\preceq q \Longleftrightarrow pq\mbox{ is a faro word different from a pair.}
$$ 
\end{rem}
This order relation endows the set $\LAT_k$ with a poset structure, which we call \emph{faro poset}. See Figure~\ref{jl} for an illustration of the Hasse diagram of $(\LAT_k,\preceq)$.  

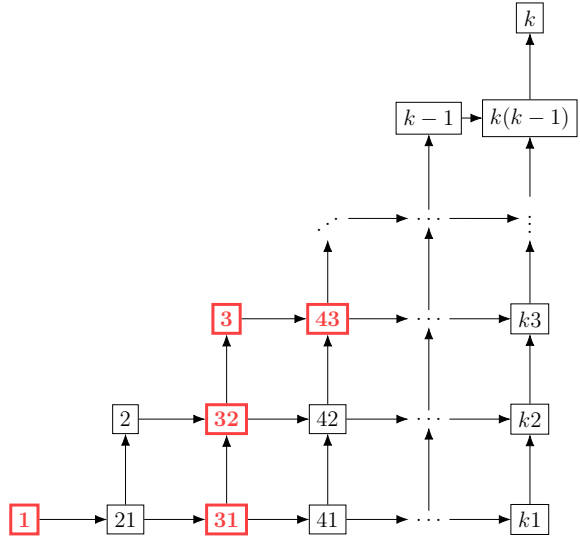
\begin{figure}[!ht]
  \centering
  \scalebox{0.7}{\begin{tikzpicture}[scale=1.9]
  \definecolor{Red2}{HTML}{FA4040}
  \tikzset{
    b/.style={circle, fill, minimum size=1mm, inner sep=0pt, draw=white},
    sp/.style={line width=1mm, white},
    s/.style={rectangle, draw},
    r/.style={rectangle, draw, Red2, ultra thick, font=\boldmath},
    %r/.style={rectangle, draw, fill, color=black, ultra thick, font=\boldmath, text=white},
    every label/.style={label distance=5mm}
  }

  \node[r] (a1)  at (1,1) {$1$};
  \node[s] (a21) at (2,1) {$21$};
  \node[r] (a31) at (3,1) {$31$};
  \node[s] (a41) at (4,1) {$41$};
  \node    (a51) at (5,1) {$\ldots$};
  \node[s] (a61) at (6,1) {$k1$};

  \node[s] (a2)  at (2,2) {$2$};
  \node[r] (a32) at (3,2) {$32$};
  \node[s] (a42) at (4,2) {$42$};
  \node    (a52) at (5,2) {$\ldots$};
  \node[s] (a62) at (6,2) {$k2$};

  \node[r] (a3)  at (3,3) {$3$};
  \node[r] (a43) at (4,3) {$43$};
  \node    (a53) at (5,3) {$\ldots$};
  \node[s] (a63) at (6,3) {$k3$};

  \node (a4)  at (4,4) {$\udots$};
  \node (a54) at (5,4) {$\ldots$};
  \node (a64) at (6,4) {$\vdots$};

  \node[s] (a5)  at (5,5) {$k-1$};
  \node[s] (a65) at (6,5) {$k(k-1)$};

  \node[s] (a6)  at (6,6) {$k$};

  \path[-{Latex[scale=1.3]}] (a1) edge (a21)
  (a21) edge (a31) edge (a2)
  (a31) edge (a41) edge (a32)
  (a41) edge (a51) edge (a42)
  (a51) edge (a61) edge (a52)
  (a61) edge (a62)

  (a2) edge (a32)
  (a32) edge (a3) edge (a42)
  (a42) edge (a43) edge (a52)
  (a52) edge (a62) edge (a53)

  (a62) edge (a63)

  (a3) edge (a43)
  (a43) edge (a53) edge (a4)
  (a53) edge (a63) edge (a54)
  (a63) edge (a64)

  (a4) edge (a54)
  (a54) edge (a64) edge (a5)
  (a64) edge (a65)

  (a5) edge (a65)

  (a65) edge (a6)

  ;
\end{tikzpicture}}
  \caption{The faro poset $(\LAT_k,\preceq)$. Red blocks represent the multichain associated to the  $k$-ary
    faro word $11313232343=1^2 (31) (32)^2 3 (43)$.}
  \label{jl}
\end{figure}

A \emph{multichain} in a poset is a chain, i.e. \emph{a totally ordered subset}, with repetitions allowed. Due to the simple structure of the faro poset, we easily deduce the following remarks.

\begin{rem} \label{uni} 
There is a one-to-one correspondence between $k$-ary faro words and
the multichains of $\LAT_k$. Indeed, Remark~\ref{remorder} implies that the block decomposition of a
$k$-ary faro word $w$ into pairs and singletons $w = b_1 b_2 \ldots b_\ell$
unambiguously corresponds to the multichain $b_1 \preceq b_2
\preceq \cdots \preceq b_\ell$ in $\LAT_k$, and vice versa.
For instance, the faro word $11313232343=11(31)(32)(32)3(43)$ corresponds to the multichain $1\preceq 1\preceq 31\preceq 32\preceq 32\preceq 3\preceq 43$ (see Figure \ref{jl}).
\end{rem}

\begin{rem} \label{unibis}
If a $k$-ary faro word $w$ contains a singleton $x$ in its decomposition into blocks, then it satisfies the following property: the set of pairs of the form $ab$, $b<a\leq x$, equals the set of pairs of the form $cd$, $d\leq x-1$.
\end{rem}

\subsection{A bijection to the set of dispersed Dyck paths}

As mentioned by E. Deutsch in \cite{Slo} (see sequence
\oeis{A124428}), the number of dispersed paths of length $n$ with $k$ peaks (a peak is an occurrence of the pattern $UD$) is given by 
$$
|\B_{n,k}| = 
\binom{\left\lfloor\frac{n}{2}\right\rfloor}{k}
\binom{\left\lceil\frac{n}{2}\right\rceil}{k}.
$$
Thus, we present a bijection $f$ from the set $\s_{n,k}$ of $k$-ary faro words of length $n$ to the set $B_{n+2(k-1), k-1}$ of dispersed Dyck paths of length $n+2(k-1)$ with exactly $k-1$ peaks. For a given $w\in \s_{n,k}$, we set 
$$
f(w)=F^{T_0} U^{T_1} D^{T_2} F^{T_3} \ldots F^{T_{3(k-2)}}
U^{T_{3(k-2)+1}} D^{T_{3(k-2)+2}} F^{T_{3(k-1)}},
$$
where $T_i$ is defined for $0\leq i\leq 3(k-1)$ as follows:
\begin{itemize}

\item if $i=3(x-1)$ then $T_i$ is the number of occurrences of the singleton $x$ in $w$;

\item if $i=3(x-1)-1$ then $T_i$ is one plus the number of pairs $xy$, $y<x$, in $w$;

\item if $i=3(x-1)+1$ then $T_i$ is one plus the number of pairs $yx$, $y>x$, in $w$.

\end{itemize}

It is worth noting that the image of a faro word $w\in \s_{n,k}$
depends on the arity $k$ that we consider. Indeed, the image of the
empty word $\epsilon$ is $UD$ when $k=2$, while $f(\epsilon)=UDUD$ for
$k=3$. We refer to Figure~\ref{bij} for one detailed example of this
bijection, while Figure~\ref{patex} provides more additional examples.
For instance, the images by $f$ of the $5$-ary
words $\epsilon, 12345, 3141, 111111212222$ are, respectively,
$UDUDUDUD$, $FUDFUDFUDFUDF$, $UUUDUDDUDDUD$ and
$FFFFFFUUDDFFFFUDUDUD$.

\begin{figure}[!ht]
  \centering
  \scalebox{1.1}{
  \def\svgwidth{30em}
  \input{bijpaper3.tex}
  }
  \caption{The image by  $f$ of the $5$-ary faro word $w=11313232343$ is $f(w)=FFUUDUUUDDDDFUUDDUD$.}
  \label{bij}
\end{figure}

\begin{rem} \label{linearalgo}
Clearly, the values $T_i$, $0\leq i\leq 3(k-1)$, can be obtained 
from $w$ by reading it from left to right and by determining if the current
entry $x$ belongs to either a pair $xy$ or $yx$, or a singleton $x$. Moreover, values of $T$ at indices $i = 0 \mod 3$ correspond 
to the lengths of maximal runs of consecutive level 
steps, and values at indices $i = 1 \mod 3$ (resp. $i = 
2 \mod 3$) correspond to the lengths of maximal runs
of consecutive up (resp. down) steps, which means that the sequence $T=T_0T_1\ldots T_{3(k-1)}$ is a 
run-length-like encoding of the path $f(w)$. Thus, $f(w)$ can be constructed from $w$ using a linear time algorithm. 
\end{rem}
\begin{lem}\label{bijimage} The path $f(w)$ is necessarily a dispersed Dyck path of
length $n+2(k-1)$ with exactly $k-1$ peaks.
\end{lem}
\begin{proof}
Since for any 
$i\neq 0 \mod 3$, $1\leq i\leq
3(k-1)-1$ we have $T_i\geq 1$, the path $w$ contains
exactly $k-1$ peaks $UD$.
Interpreting Remark
\ref{unibis} on the path $f(w)$, the number of up
steps before a given level step equals the number of
down steps before the same level step, which implies
that any level step belongs to the $x$-axis.  Let
$d_x=\sum_{i=2}^{x+2}T_{3(i-1)-1}$
(resp. $u_x=\sum_{i=1}^{x+1}T_{3(x-1)+1}$) be the
total number of down steps (resp. up steps) in the
first $x+1$ maximal runs of down steps (resp. up
steps). Due to the definition of $f$, $d_x$ equals
the number of pairs $ij$, $1\leq j<i\leq x+2$, in
$w$, and $u_x$ equals the number of pairs $ij$,
$1\leq j\leq x+1$, $i\geq j+1$, which implies that
$d_x\leq u_x$.  Also by definition, the total number
of up steps (resp. down steps) in $f(w)$ equals the
total
number of pairs in $w$, which completes the proof.
\end{proof}

\begin{thm} \label{farobij}
The map $f$ is a bijection from $\s_{n,k}$ to the set $\B_{n+2(k-1),k-1}$ of dispersed Dyck paths of length $n + 2(k-1)$ with exactly $k-1$ peaks.
\end{thm}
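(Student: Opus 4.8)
The plan is to show that $f$ is well-defined into the claimed codomain, and then to construct an explicit inverse. The discussion immediately preceding the statement already does most of the ``well-defined'' work: it verifies that for any $w \in \s_{n,k}$ the word $f(w)$ is a genuine dispersed Dyck path (level steps lie on the $x$-axis because $d_x \le u_x$ and, at the end, the total numbers of $U$'s and $D$'s agree since both equal the number of pairs of $w$), that its length is $n + 2(k-1)$ (the $T_i$'s sum to $n$ plus the $2(k-1)$ extra $+1$'s at the indices $i \not\equiv 0 \bmod 3$), and that it has exactly $k-1$ peaks $UD$ (one for each consecutive pair of runs $U^{T_{3(x-1)+1}} D^{T_{3(x-1)+2}}$, each run being nonempty by the $+1$). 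So the substance of the proof is injectivity and surjectivity.

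First I would argue that $f$ is \emph{injective} by recovering $w$ from the run-length encoding $T = T_0 T_1 \ldots T_{3(k-1)}$ of $f(w)$. Given a dispersed Dyck path $P \in \B_{n+2(k-1),k-1}$, read off its maximal runs: since it has exactly $k-1$ peaks, it has the shape $F^{a_0} U^{b_1} D^{c_1} F^{a_1} U^{b_2} D^{c_2} \cdots U^{b_{k-1}} D^{c_{k-1}} F^{a_{k-1}}$ with all $b_i, c_i \ge 1$ (two consecutive same-direction runs would be merged, and a $U$ not immediately followed by a $D$ would create no peak there but the peak count forces exactly this alternation pattern after the initial/final level stretches — I should check that a dispersed Dyck path with exactly $k-1$ peaks really must have this canonical form, which follows because every maximal $U$-run is followed by a maximal $D$-run, giving one peak, and distinct $U$-runs give distinct peaks). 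Set $T_{3(x-1)} = a_{x}$ for the singleton count of $x$, $T_{3(x-1)-1} = b_{x-1}$ (interpreted with the appropriate index shift) and $T_{3(x-1)+1} = c'$ accordingly, subtract the $+1$'s, and then invoke Remark~\ref{uni}: the numbers of singletons of each value together with the numbers of pairs $xy$ with $y<x$ and pairs $yx$ with $y>x$, for each $x \in [1,k]$, determine the multiset of blocks and their compatible linear order, hence $w$ uniquely. This also shows $f$ lands in the image bijectively onto whatever its image is.

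For \emph{surjectivity}, I would take an arbitrary $P \in \B_{n+2(k-1),k-1}$, write it in the canonical run form above, define the $T_i$'s by stripping the forced $+1$'s, and check that the resulting block multiset and order actually assemble into a valid $k$-ary faro word of length $n$. The key point to verify is that the nonnegativity and the path conditions on $P$ translate exactly into the consistency conditions of Remark~\ref{unibis} (equivalently, that at each level $x$ the down steps accumulated so far do not exceed the up steps, which is the condition $d_x \le u_x$ that makes the candidate sequence of blocks a legitimate multichain in $\LAT_k$, and the global balance $\sum b_i = \sum c_i$ which says the last block is completed correctly). Length bookkeeping gives $n = \sum T_i - 2(k-1) = |w|$. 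The main obstacle I expect is precisely this last equivalence: checking carefully that the lattice-path constraints on dispersed Dyck paths with a prescribed peak count are \emph{exactly} the constraints characterizing multichains in the faro lattice $\LAT_k$, with no extra or missing condition — in other words, that the map $T \mapsto w$ is the honest two-sided inverse of $f$. Once that dictionary is pinned down, injectivity and surjectivity both fall out, and a cardinality cross-check ($|\s_{n,k}| = \binom{\lfloor n/2\rfloor + k-1}{k-1}\binom{\lceil n/2\rceil + k-1}{k-1}$ versus $|\B_{n+2(k-1),k-1}| = \binom{\lfloor (n+2k-2)/2\rfloor}{k-1}\binom{\lceil (n+2k-2)/2\rceil}{k-1} = \binom{\lfloor n/2\rfloor + k-1}{k-1}\binom{\lceil n/2\rceil + k-1}{k-1}$) confirms the counts match, which is a useful sanity check though not logically needed once the inverse is exhibited.
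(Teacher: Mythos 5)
Your strategy -- build the explicit inverse of $f$ -- is genuinely different from the paper's proof, which establishes injectivity directly (comparing two faro words at the first index where they differ and showing some entry of the run-length sequence $T$ must then differ) and gets surjectivity for free from a cardinality argument ($|\B_{n+2(k-1),k-1}|=|\s_{n,k}|$, sequence \oeis{A124428}); the paper only sketches an inverse procedure informally \emph{after} the proof. Your route is viable, but as written it has a real gap exactly where you say ``the main obstacle I expect'' lies: you assert, citing Remark~\ref{uni}, that the numbers of singletons of each value, of pairs with a given first element, and of pairs with a given second element determine the multiset of blocks and their order. Remark~\ref{uni} does not give this. The sequence $T$ records only the two \emph{marginals} of the multiset of pairs, and distinct multisets of pairs can share the same marginals (e.g.\ $\{31,42\}$ versus $\{41,32\}$). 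What makes the reconstruction unique is the unproved (in your text) observation that two pairs satisfy $ji\preceq j'i'$ if and only if $j\le j'$ and $i\le i'$, so in a faro word the pairs occur with both coordinates simultaneously non-decreasing; hence the $m$-th pair must couple the $m$-th smallest first element with the $m$-th smallest second element, and likewise the position of each singleton $x$ (after every pair with first element $\le x$, before every pair with second element $\ge x$) is forced. Without this argument the inverse map is not even well defined, so injectivity does not ``fall out.''

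The same dictionary must then be verified in the other direction for surjectivity, and your proposal again only names the task: one must check (a) that the total numbers of up and down steps being equal gives equally many first and second elements; (b) that nonnegativity of the height at the end of each maximal run of $D$-steps says precisely that, for every $v$, the number of pairs with first element at most $v$ is at most the number of pairs with second element at most $v-1$, which is exactly what guarantees that the $m$-th smallest first element strictly exceeds the $m$-th smallest second element, i.e.\ that each reconstructed block really is a pair; and (c) that level steps lying on the $x$-axis force the equality of Remark~\ref{unibis} whenever a singleton is inserted, so that singleton is $\preceq$-comparable to every pair and the blocks do form a multichain. None of this is difficult, but under your strategy it is the substance of the theorem, and the proposal stops short of doing it. (A pragmatic remark: your closing ``sanity check'' is in fact the paper's actual surjectivity argument, so an alternative is to complete only the uniqueness half above and then finish by counting, as the paper does.)
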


\begin{proof}
Let us prove that if $w$ and $w'$ are two distinct $k$-ary faro words then we have $f(w)\neq f(w')$.  Let $i\geq 1$ be the smallest positive integer such that $w_i\neq w'_i$. Without loss of generality, we assume $w_i<w'_i$. Let us consider the positions of $w_i$ and $w'_i$ in the block 
decomposition of $w$.

If $w_i$ and $w'_i$ are both in the pairs $w_{i}w_{i+1}$ and $w'_{i}w'_{i+1}$, then Remark~\ref{uni} implies that a pair $w_ix$, $w_i>x$, cannot appear to the right of $w'_{i}$ in $w'$, which implies that $T_{3(w_i-1)-1}\neq T'_{3(w_i-1)-1}$, and thus $f(w)\neq f(w')$.
    
There remain the following cases: 
\begin{enumerate}
\item[(i)] $w_i$ or $w'_i$ is a singleton in $w$, 

\item[(ii)] $w_i$ and $w'_i$ are both in the pairs
$w_{i-1}w_i$ and $w'_{i-1}w'_i=w_{i-1}w'_i$,

\item[(iii)] $w_i$ belongs to the  pair $w_{i-1}w_i$ and $w'_i$ belongs to the pair $w'_iw'_{i+1}$,

\item[(iv)] $w_i$ and $w'_i$ are both in the pairs $w_{i}w_{i+1}$ and $w'_{i-1}w'_{i}$. 
\end{enumerate}
The fact that a faro word avoids $231$ in case (i) and Remark~\ref{uni} for cases (ii), (iii) (iv), imply that $w_i$ cannot appear to the right of $w'_i$ in $w'$.  Then the number of $w_i$ in $w$, i.e. $T_{3(w_i-1)}+T_{3(w_i-1)+1}+T_{3(w_i-1)-1}$, is different from the number of $w_i$ in $w'$, which is $T'_{3(w_i-1)}+T'_{3(w_i-1)+1}+T'_{3(w_i-1)-1}$. Therefore, there is $\delta\in\{-1,0,1\}$ such that $T_{3(w_i-1)+\delta}\neq T'_{3(w_i-1)+\delta}$, which implies that $f(w)\neq f(w')$.

Thus, $f$ is an injective map, and using a cardinality argument (see \oeis{A124428} in \cite{Slo}), we conclude that $f$ is a bijection from $\s_{n,k}$ to $\B_{n+2(k-1),k-1}$.
\end{proof}

 Although it is not used in the paper, we could prove that from a given dispersed Dyck path 
$P\in\B_{n+2(k-1),k-1}$, $f^{-1}(P)$ can be obtained after applying the following procedure. We refer to Figure~\ref{patex} for several examples.

We set $s=1$ as the initial value. We mark all 
$D$-steps preceded by an $U$-step and all the other 
$D$-steps are left unmarked. Reading the steps of 
$P$ from left to right:
\begin{itemize}
\item If a $D$-step is encountered, then skip it.
\item If an $F$-step is encountered, then write the singleton $s$. If the next step is not an $F$-step, then update $s = s + 1$.
\item If an $U$-step is encountered in the $i$th run of $U$-steps, then we distinguish two cases:
\begin{enumerate}
\item[(i)] the next step is $D$; then we skip this $UD$-pattern by continuing from the step after $D$, if it exists.

\item[(ii)] the next step is $U$; then we write the pair $ji$, where $j$ is the least integer such that the $(j-1)$-th run of $D$-steps has at least one unmarked $D$-step. Mark the first unmarked $D$-step from the $(j-1)$-th run of $D$-steps.
\end{enumerate}
\end{itemize}

\begin{figure}[!ht]
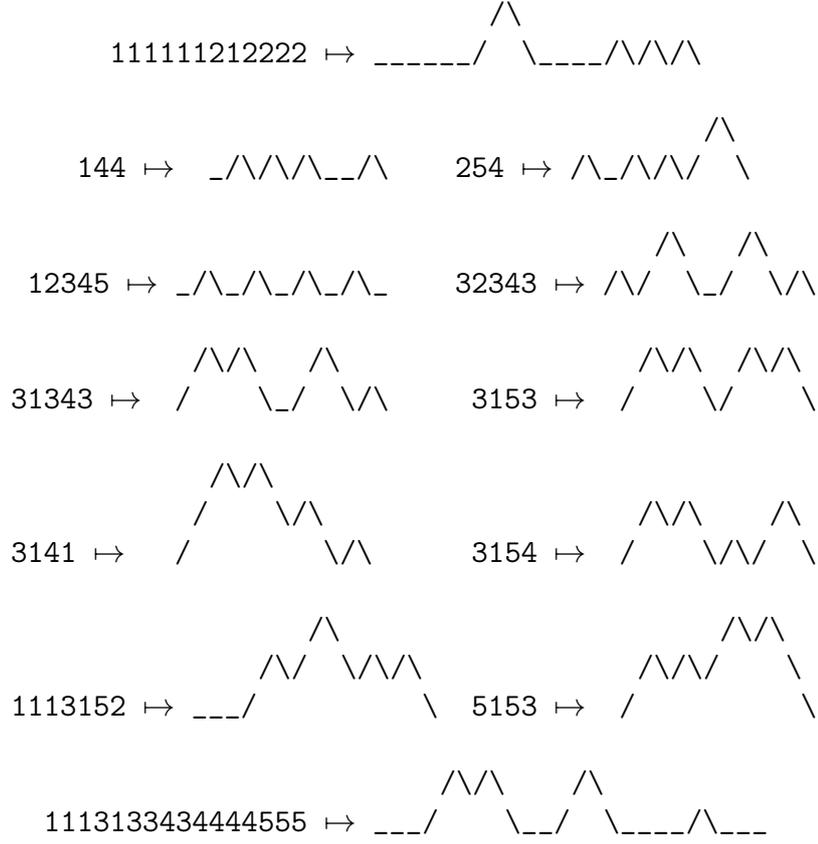

  \centering
\begin{verbatim}
                                       /\           
                111111212222 ↦ ______/  \____/\/\/\

                                                    /\
              144 ↦  _/\/\/\__/\    254 ↦ /\_/\/\/  \

                                                 /\   /\  
           12345 ↦ _/\_/\_/\_/\_    32343 ↦ /\/  \_/  \/\

                     /\/\   /\                  /\/\  /\/\   
          31343 ↦  /    \_/  \/\     3153 ↦  /    \/    \

                      /\/\      
                     /    \/\                   /\/\    /\   
          3141 ↦   /        \/\      3154 ↦  /    \/\/  \

                            /\                       /\/\  
                         /\/  \/\/\             /\/\/    \ 
          1113152 ↦ ___/          \  5153 ↦  /          \

                                    /\/\    /\          
            1113133434444555 ↦ ___/    \__/  \____/\___

\end{verbatim}
 \caption{Images of several 5-ary words under bijection $f$.}
 \label{patex}
\end{figure}

\subsection{Distribution and popularity of patterns}

In this part, we first show how the bijection $f$ transports pattern statistics on $\s_{n,k}$ into the context of dispersed Dyck paths. After, we deduce multivariate generating functions for the distribution and the popularity of patterns of length two by exploiting the classic recursive decomposition of dispersed Dyck paths.

\begin{thm}
  \label{faropat}
  For $n\geq 0$, the bijection $f$ from $\s_{n,k}$ to
  $\B_{n+2(k-1),k-1}$ maps statistics associated to patterns of length $2$ as follows:
  \begin{align*}
    f(\m{11}) & = \m{FF},\\
    f(\m{21}) & = \m{UU} = \m{DD},\\
    f(\m{12}) & = \m{DD(UD)^*UU} + \m{DD(UD)^*D} + \m{DD(UD)^*F} +\\
    & \;\;\;+ \m{F(UD)^+F} + \m{F(UD)^*UU} \\
    & = \mc n - \mc 1-\m{UU}-\m{FF}.
  \end{align*}
\end{thm}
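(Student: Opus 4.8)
The plan is to establish the three identities $f(\m{21})=\m{UU}=\m{DD}$, $f(\m{11})=\m{FF}$, and the five-term expansion of $f(\m{12})$, and to deduce the closed form $f(\m{12})=\m n-\m 1-\m{UU}-\m{FF}$ at once. Indeed, each factor $w_iw_{i+1}$ of a faro word is order-isomorphic to exactly one of $11,12,21$, so $\m{11}+\m{12}+\m{21}=\m n-\m 1$ on $\s_{n,k}$; since the length $n$ (hence the constant statistics $\m n$ and $\m 1$) is recovered from a path $P\in\B_{n+2k-2,k-1}$ via $n=|P|-2\m{UD}(P)$, pushing this relation forward by $f$ and using the first two identities yields precisely the closed form. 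So the work lies in the three individual identities.

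For the pattern $21$: a factor $w_iw_{i+1}$ with $w_i>w_{i+1}$ is exactly a pair block, so $\m{21}(w)$ is the number of pair blocks of $w$. By the definition of $f$, the number of $U$-steps of $f(w)$ is $\sum_{x=1}^{k-1}\bigl(1+\#\{\text{pairs }yx:\,y>x\}\bigr)=(k-1)+\m{21}(w)$ (the smaller entry of every pair lies in $[1,k-1]$), while $f(w)$ has exactly $k-1$ maximal $U$-runs, because every $T_i$ with $i\not\equiv0\pmod 3$ is positive, so these runs are nonempty and consecutive ones are separated by a nonempty $D$-run. Hence $\m{UU}(f(w))=\#U-(k-1)=\m{21}(w)$. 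The same count on $D$-steps gives $\m{DD}(f(w))=\m{21}(w)$; independently $\m{UU}=\m{DD}$ on every dispersed Dyck path, since $\#U=\#D$ and each peak's up-step (resp. down-step) is the last (resp. first) step of a unique maximal $U$-run (resp. $D$-run), so both run-counts equal the number of peaks.

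For the pattern $11$: using the covering relations of $\LAT_k$ together with the faro condition $w_i\le w_{i+2}$, one checks that $w_i=w_{i+1}$ forces $w_i$ and $w_{i+1}$ to be two consecutive singleton blocks of the same value, and that all singleton blocks of a fixed value $x$ occur consecutively in the decomposition (singleton values weakly increase along a multichain, and no pair block can sit between two value-$x$ singletons). Thus the singletons of value $x$ form a factor $x^{c_x}$ of $w$ with $c_x=T_{3(x-1)}$, contributing $\max(c_x-1,0)$ occurrences of $11$, so $\m{11}(w)=\bigl(\sum_x c_x\bigr)-\#\{x:c_x\ge 1\}$. On the path side the maximal $F$-runs of $f(w)$ are exactly the nonempty ones among $F^{T_0},F^{T_3},\dots,F^{T_{3(k-1)}}$ (nonempty $F$-runs are separated by nonempty $U$-runs), so $\m{FF}(f(w))=\#F-\#\{x:c_x\ge 1\}=\bigl(\sum_x c_x\bigr)-\#\{x:c_x\ge1\}=\m{11}(w)$.

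Finally, for the five-term expansion I would compute $\m{12}(w)$ from the block decomposition: every adjacent pair either lies inside a pair block (type $21$) or straddles a block junction, and a junction produces type $11$ precisely for two consecutive equal singletons, so $\m{12}(w)=(\#\text{blocks})-1-\m{11}(w)$, which by the previous step equals $\m{UU}(f(w))+\bigl(\#\text{maximal }F\text{-runs of }f(w)\bigr)-1$. On the path side I would read $f(w)$ in the normal form $F^{S_1}U^{1+B_1}D^{1+A_2}\cdots U^{1+B_{k-1}}D^{1+A_k}F^{S_k}$, where $S_x,A_x,B_x$ count the singletons equal to $x$, the pairs with first entry $x$, and the pairs with second entry $x$, and show that the five patterns count: each $DD$-factor of $f(w)$, classified — after crossing the maximal run of peaks it leads into — by whether it is next followed by a down-step ($\m{DD(UD)^*D}$), a level step ($\m{DD(UD)^*F}$), or a double up-step ($\m{DD(UD)^*UU}$), together with the last step of each non-terminal maximal $F$-run, classified the same way (the down-step case being vacuous, giving $\m{F(UD)^+F}$ and $\m{F(UD)^*UU}$), with exactly one $DD$-factor or $F$-run-end "lost" at the very end of the path. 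The main obstacle is this bookkeeping: one must verify, using incompatibilities in $\LAT_k$ — for instance that a singleton $x$ coexists with a pair $(x+j)z$, $z\le x+j-1$, only if this creates a pair with second entry in $\{x,\dots,x+j-1\}$ — that no further configuration occurs (in particular that the absent patterns such as $F(UD)^+D$ genuinely cannot occur in an image $f(w)$), and that exactly one "loss" occurs at the end of the path; comparing the resulting total with $\m{12}(w)=(\#\text{blocks})-1-\m{11}(w)$ then finishes the proof.
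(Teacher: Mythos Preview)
Your treatment of $\m{21}$ and $\m{11}$ is essentially the paper's argument, just rephrased via the run--count identity $\m{UU}=\#U-\#(\text{maximal $U$-runs})$; likewise, the closed form $f(\m{12})=\m n-\m 1-\m{UU}-\m{FF}$ follows in both approaches from $\m{11}+\m{12}+\m{21}=\m n-\m 1$. Where you diverge is the five-term expansion. The paper works on the \emph{word} side: it classifies every $12$-occurrence $xy$ by the two blocks it straddles (distinct pairs $(ax)(yb)$ in three subcases, equal pairs, pair--singleton, singleton--pair, two singletons) and reads off, for each case, which of the five path patterns $f$ produces and with what exponent in $(UD)^m$. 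Your route is a \emph{path}-side double count: each $DD$ and each run-final $F$ is followed, after a maximal $(UD)^m$, by exactly one of $D$, $F$, $UU$, or end-of-path, and you want to match the total against $\m{DD}+\m{F^+}-1$. This is a legitimate alternative and arguably slicker, since it avoids the subcase analysis; conversely, the paper's case analysis is what explains \emph{why} those particular five patterns appear, and it scales to the length-$3$ statistics of Theorem~\ref{faropatbis}.

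Two points to tighten in your sketch. First, the ``exactly one loss at the end of the path'' claim is the crux and can be argued entirely on the path side: write the maximal suffix $(UD)^m$ of $P=f(w)$; since $n\ge 1$ forces $P\neq (UD)^{k-1}$, the step $s$ immediately preceding this suffix exists and lies at height $0$, hence $s\in\{F,D\}$; if $s=D$ then maximality of $m$ gives a $DD$ there, and in either case this is the unique lost $DD$ or run-final $F$. Second, your appeal to ``incompatibilities in $\LAT_k$'' to rule out $F(UD)^+D$ is unnecessary: in \emph{any} dispersed Dyck path every $F$ lies on the $x$-axis, so after $F(UD)^m$ the path is again on the $x$-axis and the next step cannot be $D$. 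No word-side argument is needed for this, and dropping it cleans up the bookkeeping you flagged as the main obstacle.
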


\begin{proof}
  By Remark~\ref{uni}, any occurrence of the pattern $11$ in a faro
  word $w$ is formed by two consecutive singletons $xx$. From the
  definition of the bijection $f$, it follows that the number of
  occurrences of $11$ in $w$ equals the number of occurrences of $FF$
  in $f(w)$, that is $f(\m{11})=\m{FF}$.
  
  An occurrence of the pattern $21$ in $w$ is necessarily a pair in
  the decomposition of $w$. Since the length of a maximal run of
  consecutive up steps is equal to one plus the number of pairs $yx$
  in $w$ for a given $x\in[1,n]$, the number of occurrences of $21$ in
  $w$ equals the number of occurrences of $UU$ in $f(w)$. On the other hand, any nonempty dispersed Dyck path $P$ is of the form either $P=FR$ or $P=UQDR$ where $Q$ is a Dyck path and $R$ a dispersed Dyck path. Reasoning by induction, we obtain that the number  of occurrences of $DD$ equals those of $UU$ in any dispersed Dyck path, which implies $f(\m{21})=\m{UU}=\m{DD}$.

  Now, let us prove the equation $f(\m{12}) = \m{DD(UD)^*UU} + \m{DD(UD)^*D} +
  \m{DD(UD)^*F} + \m{F(UD)^+F} + \m{F(UD)^*UU}$.  An
  occurrence $xy$ of the pattern $12$ occurs in $w$ as a subblock of one
  of the following:
\begin{enumerate}
\item[(i)] two distinct consecutive pairs $(ax)(yb)$,  
\item[(ii)] two equal consecutive  pairs $(yx)(yx)$, 
\item[(iii)] a pair followed by a singleton $(ax)(y)$, 
\item[(iv)] a singleton followed by a pair $(x)(ya)$, 
\item[(v)] two distinct singletons $(x)(y)$.
\end{enumerate}

\noindent For the case (i), we distinguish three subcases.

\emph{Subcase 1.}  The occurrence $xy$ appears in a factor of the form
$(ax)(yb)$ with $b\geq a$.
This implies that neither a singleton
$s \in [a,b]$ nor a pair $pq$ with $p \in (a,b]$ or $q \in [a,b)$ can
appear in $w$. Therefore, $T_{3(s-1)}=0$ for $s \in [a,b]$,
$T_{3(p-1)-1}=1$ for $p \in (a,b]$ and $T_{3(q-1)+1}=1$ for
any $q \in [a,b)$. Thus, between the run of $D$-steps associated
to $T_{3(a-1)-1} \ge 2$ and the run of $U$-steps associated to
$T_{3(b-1)+1} \ge 2$, there are no level steps, and the runs
of $D$-steps and $U$-steps are of length one, which creates
$m = b - a\geq 0$ peaks $UD$.  Hence, the occurrence $xy$ is
associated to an occurrence of the pattern $DD(UD)^* UU$.

\emph{Subcase 2.} The occurrence $xy$ appears in a factor of
the form $(ax)(yb)$ with $b<a$ and $a<y$. This implies that
neither a singleton $x \in [a,y)$ nor a pair $pq$ with
$p \in (a,y)$ or $q \in [a,y)$ can appear in the word $w$.
Therefore, $T_{3(x-1)}=0$ for $x \in [a,y[$, $T_{3(p-1)-1}=1$
for $p \in (a,y)$ and $T_{3(q-1)+1}=1$ for any $q \in [a,y)$.
Thus, between the run of $D$-steps associated to $T_{3(a-1)-1} \ge 2$
and the run of $D$-steps associated to $T_{3(y-1)+1} \ge 2$,
there are no level steps, and the runs of $D$-steps and $U$-steps
are of length one, which creates $m = y - a > 0$ peaks $UD$.
Hence, the occurrence $xy$ is associated to an occurrence
of the pattern $DD(UD)^+ D$.
   
\emph{Subcase 3.}
The occurrence $xy$ appears in a factor of the form
$(ax)(yb)$ with $b<a$ and $a \geq y$. By definition of a faro word, we
necessarily have $a \leq y$. Thus. we deduce $ a = y$.  So, we have
$T_{3(a-1)-1} \ge 3$, which counts all consecutive pairs $az, a>z$ in
$w$. Due to Remark~\ref{uni}, all these pairs appear consecutively in
$w$.  Thus, the number of occurrences of the form $(ax)(ab)$, for
$x,b$ such that $x \le b < a$ is equal to the number of
$DDD=DD(UD)^0D$ patterns in the $(a-1)$-th run of $D$-steps in the
corresponding dispersed Dyck path. Combining to the subcase 2, the occurrence 
$xy$ is associated to an occurrence of the pattern $DD(UD)^* D$.

In the case (ii), we have a factor of the form $(ax)(yb)$ with $a = y$
and $x=b$ and the argument from Subcase 3 of case (i) applies.
 For the remaining cases, (iii) through (v), the
occurrence $xy$ of the pattern $12$ is either created by a pair
followed by a singleton $(ax)(y)$, or by a singleton followed by a
pair $(x)(ya)$, or by two different singletons $(x)(y)$.  Arguments
similar to the ones given above, allow us to prove that an occurrence
$xy$ in $w$ corresponds to an occurrence of:
\begin{itemize}
\item $DD(UD)^{y-a}F$ for the case $(ax)(y)$,
\item $F(UD)^{a-x}UU$ for the case $(x)(ya)$, and 
\item $F(UD)^{y-x}F$ for the case $(x)(y)$.
\end{itemize}

Finally, in any $n$-length word we have  $n-1$ occurrences of $2$-length
patterns, thus $\mc n - \mc 1 = \m {11} + \m{21} + \m{12}$.
Applying the bijection $f$ to both parts of the equation, we obtain $f(\m{12}) = \mc n - \mc 1 - f(\m{11}) - f(\m{21}) = \mc n
- \mc 1 - \m{UU} - \m{FF}$.
\end{proof}

\begin{thm} \label{farogenfunc}
For $p\in\{11,12,21\}$, the trivariate generating functions $F_p(x,y,z)$ where the coefficient at $x^ny^kz^t$ is the number of $k$-ary faro words of length $n$ containing exactly $t$ occurrences of the pattern $p$ are:
    {\footnotesize \begin{align*}
        F_{11}(x,y,z)&={\frac {2y \left( xz-x-1 \right) }{-xyz+xy+{x}^{3}z-{x}^{3}+y-{x}^{2}+xz+x-1+(xz-x-1)A_1}},\\
        F_{21}(x,y,z)&={\frac {2y}{-y+{x}^{2}z-2\,x+1+A_2}},\\
        F_{12}(x,y,z)&={\frac {y \left( {x}^{3}{z}^{2}-{x}^{3}z+{x}^{2}z+xyz-xy-3\,xz+x+y-1+(xz-x+1)A_2 \right) }{ \left( {
              x}^{3}{z}^{2}-{x}^{3}z+{x}^{2}z-xyz+xy-xz-x-y+1+(xz-x+1)A_2 \right)  \left( -1+y \right) z}}+\frac{y}{1-y},
  \end{align*}}
where {\footnotesize 
$A_1=\sqrt{{x}^{4}-2\,{x}^{2}y-2\,{x}^{2}+{y}^{2}-2\,y+1}$} and {\footnotesize 
$A_2=\sqrt{{x}^{4}{z}^{2}-2\,{x}^{2}yz-2\,{x}^{2}z+{y}^{2}-2\,y+1}.$}
\end{thm}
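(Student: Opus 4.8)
The plan is to transport the whole question to dispersed Dyck paths via the bijection $f$ of Theorem~\ref{farobij} and the dictionary of Theorem~\ref{faropat}, and to read off all three series from the single four-variable generating function
\[
  M(x,y,u,\ell)=\sum_{P\in\B}x^{|P|}\,y^{\pk(P)}\,u^{\m{UU}(P)}\,\ell^{\m{FF}(P)} .
\]
Because $f$ sends $\s_{n,k}$ bijectively onto $\B_{n+2(k-1),k-1}$ and satisfies $f(\m{11})=\m{FF}$, $f(\m{21})=\m{UU}$, a $k$-ary faro word of length $n$ with $t$ occurrences of the relevant pattern contributes the monomial $x^{\,n+2(k-1)}y^{\,k-1}(\cdot)^{t}$ to $M$, so converting back to faro-word parameters is the substitution $y\mapsto y/x^{2}$ followed by multiplication by $y$. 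This gives $F_{11}(x,y,z)=y\,M(x,y/x^{2},1,z)$ and $F_{21}(x,y,z)=y\,M(x,y/x^{2},z,1)$. For the pattern $12$ I would avoid the regular-expression patterns of Theorem~\ref{faropat} and use instead the elementary identity $\m{11}+\m{21}+\m{12}=\m n-\m 1$, valid on $\s_{n,k}$ for $n\ge1$ (each of the $n-1$ adjacent pairs of letters is an ascent, a descent, or a repetition); it yields $z^{\m{12}}=z^{\,n-1}z^{-\m{21}}z^{-\m{11}}$ and hence $F_{12}(x,y,z)=\tfrac{y}{z}\,M\!\big(xz,\tfrac{y}{x^{2}z^{2}},\tfrac1z,\tfrac1z\big)$, up to the empty-word correction described below.

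It then remains to compute $M$. I would use the standard recursive description of dispersed Dyck paths together with two additivity facts: $\m{UU}(P)=\#U(P)-\pk(P)$, since every maximal run of up steps ends in exactly one peak, and a maximal run $F^{a}$ of level steps contributes $a-1$ to $\m{FF}$. For ordinary Dyck paths the first-return decomposition $\gamma=U\gamma_{1}D\gamma_{2}$ gives, for the generating function $\widetilde C=\widetilde C(x,y,u)$ refined by steps, peaks and $\m{UU}$, the quadratic
\[
  x^{2}u\,\widetilde C^{2}+(x^{2}y-x^{2}u-1)\,\widetilde C+1=0 ,
\]
whose power-series root carries the radical $\sqrt{(x^{2}y-x^{2}u-1)^{2}-4x^{2}u}$; under $y\mapsto y/x^{2}$ this becomes precisely $A_{1}$ (for $u=1$) and, with $u$ playing the role of $z$, $A_{2}$. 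Finally a dispersed Dyck path is a sequence of maximal level runs and elevated arches $U\gamma D$ ($\gamma$ an ordinary Dyck path) in which no two maximal level runs are adjacent; writing $r=x/(1-x\ell)$ for one maximal level run and $a=x^{2}u\,\widetilde C+x^{2}(y-u)$ for one arch, the sequence count gives the rational form $M=\dfrac{1+r}{\,1-a(1+r)\,}$. Substituting $\widetilde C$ and clearing denominators then turns the three specializations above into the stated closed forms.

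The one genuinely delicate point is the empty faro word. The identity $\m{11}+\m{21}+\m{12}=\m n-\m 1$ fails on $\varepsilon\in\s_{0,k}$ (its two sides are $0$ and $-1$): equivalently the prefactor $1/z$ in $F_{12}$, which encodes $z^{\,n-1}$, is legitimate only for $n\ge1$, so the naive substitution attaches $\tfrac1z y^{k}$ instead of $y^{k}$ to each empty word. Subtracting $\tfrac{y}{z(1-y)}$ and adding back $\tfrac{y}{1-y}$ repairs this for all $k\ge1$ and produces the extra additive term $\tfrac{y}{1-y}$ in $F_{12}$ together with the factor $(y-1)z$ in its denominator. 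I expect the bulk of the work to be exactly this bookkeeping — checking that the ``sequence of arches and isolated maximal level runs'' model enumerates each dispersed Dyck path once and only once, carrying the off-by-one counts of the maximal runs correctly through the decomposition, and tracking the negative powers of $z$ for the pattern $12$ — rather than the terminal algebraic simplification, which is mechanical.
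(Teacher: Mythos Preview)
Your proposal is correct and follows the same overall strategy as the paper: transport to dispersed Dyck paths via $f$, apply the dictionary of Theorem~\ref{faropat}, solve a quadratic for the Dyck generating function, and handle $12$ via the complement identity $\m{12}=\m{n}-\m1-\m{11}-\m{21}$ together with the empty-word correction $\tfrac{y}{1-y}$.

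The only noteworthy difference is organizational. The paper treats the three patterns with three separate first-step decompositions of $\B$ (one tailored to $\m{UU}$, one to $\m{FF}$, one to their sum), whereas you compute a single four-variable series $M(x,y,u,\ell)$ using the ``sequence of arches interspersed with maximal flat runs, no two flat runs adjacent'' decomposition $M=\dfrac{1+r}{1-a(1+r)}$ and then specialise. Your route is a bit cleaner---one quadratic and one rational assembly cover all three cases at once, and the additivity of $\pk$, $\m{UU}$ and $\m{FF}$ across the arch/run blocks is transparent---while the paper's case-by-case decompositions make the individual functional equations slightly more explicit at the cost of some repetition. Either way the algebraic content (the same discriminants $A_1$, $A_2$, the same substitution $y\mapsto y/x^{2}$ and global factor~$y$, the same $1/z$ trick for $F_{12}$) is identical.
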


\begin{proof}
We have $f(\s_{n,k})=\B_{n+2(k-1),k-1}$. Thus, for any pattern $p$, the trivariate generating function $F_p(x,y,z)$ is given by  $y\cdot B_p(x,\frac{y}{x^2},z)$ where $B_p(x,y,z)$ is the trivariate generating function whose coefficient at $x^ny^kz^t$ is equal to the number  of dispersed Dyck paths $P\in\mathcal{B}_{n,k}$ such that $\m q(P)=t$, where $\m q=f(\m p)$.
    
For $p=21$, Theorem \ref{faropat} has $f(\m{21})=\m{UU}$. Therefore, we decompose the set $\D$ of Dyck paths as follows:
$$
\D = \epsilon \uplus UD\D \uplus U(\D\setminus\epsilon) D \D.
$$ 
We also decompose the set $\B$ of dispersed paths as follows:
$$
\B = \epsilon \uplus F\B \uplus UD\B \uplus U(\D\setminus\epsilon)D\B.
$$
If $D(x,y,z)$ is the generating function where $x^ny^kz^t$ is the number of Dyck paths of length $n$ with $k$ peaks and $t$ occurrences of $UU$, then the above algebraic equation yields $D(x,y,z)=1+x^2yD(x,y,z)+x^2z(D(x,y,z)-1)D(x,y,z)$. If $B_{21}(x,y,z)$ is the generating function whose coefficient at $x^ny^kz^t$ is the number of dispersed Dyck paths of length $n$ with $k$ peaks and $t$ occurrences of $UU$, then the above decomposition of $\B$ yields the functional equation
$$
B_{21}(x,y,z) = 1 + xB_{21}(x,y,z) + x^2yB_{21}(x,y,z) + x^2z(D(x,y,z)-1)B_{21}(x,y,z),
$$
which, in turn, yields the desired result.

For $p=11$, Theorem \ref{faropat} has $f(\m{11})=\m{FF}$. Therefore, we decompose the set $\D$ of Dyck paths as follows:
$$
\D=\epsilon \uplus UD\D\uplus U(\D\setminus\epsilon) D \D.
$$ 
We also decompose the set $\B$ of dispersed Dyck paths as follows:
$$
\B=\epsilon \uplus \F \uplus \F UD \B \uplus \F U(\D\setminus\epsilon)D\B \uplus UD\B \uplus U(\D\setminus\epsilon)D\B,
$$
where $\F$ is the infinite set of paths $F^k$ for
$k\geq 1$.  Denote by $F(x,y,z)$ the generating function for $\F$,
where its coefficient at $x^ny^kz^t$ is the number of $n$-length paths
from $\F$ having $k$ peaks and $t$ occurrences of a pattern $FF$.
Notice that $F(x,y,z)=\frac{x}{1-xz}$.  If $D(x,y)$ is the generating
function where the coefficient at $x^ny^k$ is the number of Dyck paths
of length $n$ with $k$ peaks, then the above set decomposition yields
$D(x,y)=1+x^2yD(x,y)+x^2(D(x,y)-1)D(x,y)$. Using the second set
decomposition of $\mathcal{B}$, we obtain a functional equation
\begin{align*}
B_{11}(x,y,z)&= 1+x(1+z(F(x,y,z)-1))+x^3(1+z(F(x,y,z)-1))B_{11}(x,y,z)\\
&\phantom{=} \, +x^3(1+z(F(x,y,z)-1))D(x,y)B_{11}(x,y,z)+x^2yB_{11}(x,y,z) \\
&\phantom{=} \, +x^2(D(x,y)-1)B_{11}(x,y,z),
\end{align*}
which provides the result.

For $p=12$, we have, for any $P\in \mathcal{B}_{n,k}$,
that $\m{12}(P) = n-1-\m{11}(P)-\m{21}(P)$
(that is $\m{12}= \mc n- \mc 1-\m{11}-\m{21}$),
and thus
$$
F_{12}(x,y,z) = \frac{1}{z} \left(F_{11+21} \left( xz,y,\frac{1}{z} \right) - \frac{y}{1-y} \right)+\frac{y}{1-y}.
$$
According to Theorem \ref{faropat}, we have
$f(\m{11}+\m{21})=\m{FF}+\m{UU}$. Therefore, we decompose the set $\B$
as before for the case of pattern $11$, and construct a functional
equation by taking into account the different occurrences of $FF$ and
$UU$, which yields the claimed result.
\end{proof}

\begin{cor}
    \label{faropop}
        For $n\geq 0$, the popularity  of pattern $p\in\{11,12,21\}$ in
        $\s_{n,k}$ is given by the bivariate generating
        function $G_p(x,y)$:
        \begin{align*}
          G_{11}(x,y)&={\frac {4{x}^{2}y}{ \left( 1-y-2x+x^2+A_1 \right) ^{2}}},\\
          G_{21}(x,y)&={\frac {2{x}^{2}y \left( 1+y-{x}^{2}-A_1 \right) }{ \left( 
          1-y-2x+x^2+A_1 \right)^{2}A_1}},\\
          G_{12}(x,y)&={\frac {2xy \left( A_3+(x^3-2x^2+2xy-2x-2y+2)A_1 \right) }{ \left( 1-y-2x+x^2+A_1 \right)^{2} \left( 1-y \right) A_1}},\\
        \end{align*}
        where $A_1=\sqrt {{x}^{4}-2\,{x}^{2}y-2\,{x}^{2}+{y}^{2}-2\,y+1}$ and
        $A_3={x}^{5}-2\,{x}^{4}-{x}^{3}y-3\,{x}^{3}+4\,{x}^{2}y+4\,{x}^{2}-2\,xy-2\,{y}^{2}+2\,x+4\,y-2$.
\end{cor}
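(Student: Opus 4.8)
Because the popularity of $p$ in $\s_{n,k}$ is $\sum_{w\in\s_{n,k}}\m p(w)$, it is read off from the distribution generating function of Theorem~\ref{farogenfunc} by differentiating in the pattern‑counting variable and setting it equal to $1$: the coefficient of $x^ny^k$ in $\partial_z F_p(x,y,z)\big|_{z=1}$ is $\sum_t t\,a_{n,k,t}=\m p(\s_{n,k})$, where $a_{n,k,t}$ denotes the coefficient of $x^ny^kz^t$ in $F_p$. This is a legitimate formal operation since each $x^ny^k$‑coefficient of $F_p$ is a polynomial in $z$. So the plan is simply to differentiate the three closed forms of Theorem~\ref{farogenfunc} in $z$, evaluate at $z=1$, and simplify; the corollary records the three outcomes, namely $G_p(x,y)=\partial_zF_p(x,y,z)\big|_{z=1}$.

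Two preliminary identities make the evaluation manageable. First, from the definition of the radicals, $A_2(x,y,1)=A_1$, since substituting $z=1$ into $x^4z^2-2x^2yz-2x^2z+y^2-2y+1$ yields exactly the radicand of $A_1$. Second,
$$\left.\frac{\partial A_2}{\partial z}\right|_{z=1}=\left.\frac{x^4z-x^2y-x^2}{A_2}\right|_{z=1}=\frac{x^2(x^2-y-1)}{A_1}.$$
With these, the cases $p=21$ and $p=11$ are quick. The quotient rule applied to $F_{21}=\dfrac{2y}{-y+x^2z-2x+1+A_2}$ gives
$$\partial_zF_{21}=-\frac{2y\,(x^2+\partial_zA_2)}{(-y+x^2z-2x+1+A_2)^2},$$
which at $z=1$ becomes $\dfrac{2x^2y\,(1+y-x^2-A_1)}{(1-y-2x+x^2+A_1)^2A_1}$ after clearing the inner fraction, i.e.\ exactly $G_{21}$. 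The function $F_{11}$ depends on $z$ only through explicit rational terms — its radical $A_1$ is $z$‑free — so the same mechanical differentiation, followed by reduction via $A_1^2=x^4-2x^2y-2x^2+y^2-2y+1$, collapses to $G_{11}=\dfrac{4x^2y}{(1-y-2x+x^2+A_1)^2}$ (the $A_1$ in the intermediate expression cancels).

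For $p=12$ I would carry out the computation in two ways and cross‑check. \emph{Directly:} write $F_{12}=\dfrac{y\,N(x,y,z)}{M(x,y,z)\,(y-1)\,z}+\dfrac{y}{1-y}$ with $N$, $M$ the polynomial‑plus‑$(xz-x+1)A_2$ expressions displayed in Theorem~\ref{farogenfunc}; the additive term is $z$‑free, and $\partial_z\bigl(N/(Mz)\bigr)\big|_{z=1}=(N'M-NM'-NM)/M^2$ at $z=1$, where the needed values $N|_{z=1}$, $N'|_{z=1}$, $M|_{z=1}=1-y-2x+x^2+A_1$, $M'|_{z=1}$ follow from the two radical identities above. \emph{Alternatively:} use the statistic identity $\m{12}=\m{n-1}-\m{11}-\m{21}$ on nonempty faro words, which on the popularity level gives
$$G_{12}(x,y)=x\,\partial_x H(x,y)-H(x,y)+\frac{y}{1-y}-G_{11}(x,y)-G_{21}(x,y),$$
where $H(x,y)=F_p(x,y,1)=\dfrac{2y}{1-y-2x+x^2+A_1}$ is the (pattern‑independent) size generating function of $\s_{n,k}$ and the $\frac{y}{1-y}$ corrects for the empty word in each arity; this route reuses $G_{11},G_{21}$ and isolates the one new ingredient, the length‑weighted count $x\,\partial_xH$.

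The only obstacle is bookkeeping rather than ideas. After differentiation the expressions for $F_{11}$ and especially $F_{12}$ are bulky, and one must be systematic about rationalizing the several occurrences of $A_1$ (always reducing via $A_1^2$) and about the order of term collection in order to recognise the compact answers — in particular the degree‑five polynomial $A_3$ and the factor $x^3-2x^2+2xy-2x-2y+2$ multiplying $A_1$ in the numerator of $G_{12}$, together with the denominator $(1-y-2x+x^2+A_1)^2(1-y)A_1$. Running the $p=12$ computation both ways and verifying that they agree provides a built‑in check (and, incidentally, re‑confirms Theorem~\ref{farogenfunc}).
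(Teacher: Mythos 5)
Your proposal is correct and follows exactly the paper's route: the paper proves the corollary by computing $\left(\frac{\partial}{\partial z}F_p(x,y,z)\right)\big\vert_{z=1}$ from Theorem~\ref{farogenfunc}, which is precisely your plan, and your preliminary identities ($A_2\vert_{z=1}=A_1$ and the value of $\partial_z A_2$ at $z=1$) together with the cross-check via $\m{12}=\m{n-1}-\m{11}-\m{21}$ are just a more detailed execution of that same calculation.
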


\begin{proof} Using Theorem \ref{farogenfunc}, we obtain the result by calculating  $\left(\frac{\partial}{\partial z} F_p(x,y,z)\right)\big\vert_{z=1}$ for $p\in\{11,21,12\}$.
\end{proof}

\begin{cor}
  \label{faroavoid}
  For $p\in\{11,12,21\}$, the bivariate generating functions
  $H_p(x,y)$ whose coefficient at $x^ny^k$ is the number of $k$-ary faro words of length $n$ avoiding the pattern $p$ are:
        
  \begin{align*}
    H_{11}(x,y) & = {\frac {2y \left(x+1 \right) }{1-x-y-xy+{x}^{2}+{x}^{3}+(1+x)\sqrt {(x^2-2x-y+1)(x^2+2x-y+1)}}},\\
                %&  \text{diagonal est 1, 1, 2, 9, 46, 230, 1303, 6923, 41200, ... NEW}\\
    H_{21}(x,y) & = \frac{y}{1-x-y}, \\
     H_{12}(x,y) & = {\frac {y \left( -{x}^{3}y+{x}^{2}y-x{y}^{2}+xy+{y}^{2}-2y+1 \right) }{x{y}^{3}-3x{y}^{2}-{y}^{3}+3xy+3{y}^{2}-x-3y+1}}.
\end{align*}
  \end{cor}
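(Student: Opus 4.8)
The plan is to observe that a faro word avoids $p$ exactly when it has zero occurrences of $p$, so that $H_p(x,y)=F_p(x,y,0)$ with $F_p$ as in Theorem~\ref{farogenfunc}. (The closed forms there represent formal power series in $x,y$, with the radicals taken on the branch equal to $1$ at the origin, so evaluating at $z=0$ is legitimate.) Thus the proof amounts to substituting $z=0$ in each of the three generating functions and simplifying.

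For $p=21$ this is immediate: at $z=0$ the radical $A_2$ specializes to $\sqrt{y^2-2y+1}=1-y$, so
\[
F_{21}(x,y,0)=\frac{2y}{-y-2x+1+(1-y)}=\frac{y}{1-x-y},
\]
which is $H_{21}$. For $p=11$ the radical $A_1$ does not involve $z$, so one simply sets $z=0$ in the numerator and denominator of $F_{11}$; after multiplying through by $-1$ and using $A_1^{2}=(x^{2}-2x-y+1)(x^{2}+2x-y+1)$ one recovers exactly the stated $H_{11}(x,y)$.

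The case $p=12$ is the only delicate point, because the rational part of $F_{12}$ carries a factor $z$ in both its numerator and its denominator, so $z=0$ produces an indeterminate $0/0$. One route is to expand that numerator and denominator to first order in $z$ — using $A_2|_{z=0}=1-y$ and $\partial_z A_2|_{z=0}=-x^{2}(1+y)/(1-y)$ — and pass to the limit; this is routine but computation-heavy. A cleaner alternative, which I would present instead, is combinatorial: a faro word avoids the consecutive pattern $12$ iff it has no ascent, i.e.\ iff it is weakly decreasing; combining $w_i\ge w_{i+1}$ with the faro condition $w_i\le w_{i+2}$ forces $w_i=w_{i+1}=w_{i+2}$ for every admissible $i$, so a weakly decreasing faro word of length $\ge 3$ is constant, while for length $\le 2$ the faro condition is vacuous and any weakly decreasing word qualifies. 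Counting the $k$-ary such words by length ($1$ for $n=0$, $k$ for $n=1$, $\binom{k+1}{2}$ for $n=2$, and $k$ for each $n\ge 3$) and summing over $n$ and $k$ gives
\[
H_{12}(x,y)=\frac{y}{1-y}+\frac{xy}{(1-y)^{2}}+\frac{x^{2}y}{(1-y)^{3}}+\frac{x^{3}y}{(1-x)(1-y)^{2}},
\]
which, placed over a common denominator, is the closed form in the statement.

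The main obstacle is exactly this $p=12$ computation: whichever route one chooses one must handle the alphabet-size variable $y$ with care (a word with largest letter $m$ is a $k$-ary word for every $k\ge m$, hence contributes $y^{m}/(1-y)$ to the sum), and keep track of the branch of the radical; the $11$ and $21$ cases are then routine algebraic specializations of Theorem~\ref{farogenfunc}.
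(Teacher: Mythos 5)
Your proposal is correct, and for $p=11$ and $p=21$ it is exactly the paper's argument: the paper's entire proof is the observation $H_p(x,y)=F_p(x,y,0)$ with $F_p$ from Theorem~\ref{farogenfunc}, and your specializations (using $A_2\vert_{z=0}=1-y$ and $A_1^2=(x^2-2x-y+1)(x^2+2x-y+1)$) reproduce the stated closed forms. Where you genuinely diverge is $p=12$: you correctly notice that the displayed formula for $F_{12}$ has a removable singularity at $z=0$ (the rational part is $0/0$ there, since its denominator carries an explicit factor $z$ while its numerator vanishes at $z=0$), so the paper's one-line substitution implicitly means evaluating the underlying power series, i.e.\ taking a limit; instead of doing that expansion you give a direct combinatorial count. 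That count is sound: avoiding consecutive $12$ means weakly decreasing, which together with $w_i\le w_{i+2}$ forces constancy for $n\ge 3$, while for $n\le 2$ the faro condition is empty, giving $1$, $k$, $\binom{k+1}{2}$, and $k$ words for $n=0,1,2,\ge 3$ respectively. Your sum
\[
\frac{y}{1-y}+\frac{xy}{(1-y)^{2}}+\frac{x^{2}y}{(1-y)^{3}}+\frac{x^{3}y}{(1-x)(1-y)^{2}}
=\frac{y\left(1-2y+y^{2}+xy-xy^{2}+x^{2}y-x^{3}y\right)}{(1-x)(1-y)^{3}}
\]
does agree with the stated $H_{12}$, since the stated denominator factors as $x(y-1)^3+(1-y)^3=(1-x)(1-y)^3$ and the numerators coincide after expansion. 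The trade-off: the paper's route is uniform across the three patterns but leans on the (unstated) limit interpretation at $z=0$ and on the heavy formula for $F_{12}$; your route for $12$ is independent of Theorem~\ref{farogenfunc} and makes the answer transparent, at the cost of a separate elementary argument. Either is acceptable; if you keep the combinatorial version, state explicitly the convention that the coefficient of $x^ny^k$ counts words over the fixed alphabet $[1,k]$ (so the empty word contributes $y/(1-y)$), which you use when summing over $k$.
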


\begin{proof} 
Note that $H_p(x,y)=F_p(x,y,0)$, where $F_p(x,y,z)$ is as in Theorem \ref{farogenfunc}.% for $p\in\{11,21,12\}$.
\end{proof}

Now we discuss the two special cases of $k=2$ and
$k=n$, which correspond respectively to binary words and $n$-ary words of length $n$ (see Table~\ref{tabpop0} for numerical values).
    
Case $k=2$: using Corollary~\ref{faropop}, we can easily prove that the popularity of the pattern $11$ in $\s_{n,2}$ generates
a shift of the sequence~\oeis{A212964} in~\cite{Slo}, which also
counts the number of 3-element subsets $A$ of $\{1,\ldots, n+1\}$ such that all
the sums $a_1+a_2$ with $a_1\leq a_2$ and $a_1,a_2\in A$ are
distinct. The popularity of $21$ generates a shift of the
sequence~\oeis{A006918} where the general term is given by
$\binom{n+3}{3}/4$ if $n$ is odd, and $n(n+2)(n+4)/24$ if $n$ is
even. The other patterns do not provide known sequences in
~\cite{Slo}.

Case $k=n$: the sequences of popularity of $p\in\{11,21,12\}$ are not listed in ~\cite{Slo}, and we have not succeeded in finding a closed form for the diagonal of $G_p(x,y)$. However, using the  Maple package \emph{gfun}~\cite{gfun}, we conjecture that the popularity sequence for $11$ satisfies a recurrence equation
$Q_1(n)u_n+Q_2(n)u_{n+1}+Q_3(n)u_{n+2}+Q_4(n)u_{n+3}=0$, where $Q_1,Q_2,Q_3,Q_4$ are some polynomial functions of degree at most $10$, which suggests that the generating function of the diagonal is D-finite when $p=11$. However, we have not succeeded in obtaining a closed form of the diagonal of $H_{11}(x,y)$. In contrast, a simple
study of the residues (see \cite{sta} Section 6.3) of $H_{21}(x/y,y)$ at the pole $y_0=(1-\sqrt{1-4x})/2$ yields the generating function $(1-\sqrt{1-4x})/(2\sqrt{1-4x})$ of the diagonal of $H_{21}(x,y)$, and its general term is, therefore, $\binom{2n-1}{n}$ (see sequence \oeis{A001700}). A similar study for the pattern $12$ yields the diagonal $x(x^3-2x^2+x+1)/(1-x)^2$ (here, the pole is $y_0=x$), which generates the sequence $u_1=1$, $u_2=3$, $u_n=n$ for
$n\geq 3$.
\begin{table}[ht!]
    \centering
   $ \begin{array}{c|c|l}
      k&\mbox{Pattern $p$} & \mbox{Popularity of $p$ in $\mathcal{S}_{n,k}$ for $1\leq n\leq 9$}   \\\hline\hline
     2 &11  &     0, 2, 6, 14, 26, 44, 68, 100, 140, \dots\\
      %\hline
     & 21  & 0, 1, 2, 5, 8, 14, 20, 30, 40,  \dots      \\
      %\hline
      &12&  0,1,4,8,14,22,32,45,60, \dots   \\
      \hline
      n &11  & 0,2,12,80,490, 3192, 20076,13094,83655, \dots\\
      %\hline
     & 21  & 0,1,8,85, 574,4788,31800, 24489,162305,\dots     \\
      %\hline
      &12&  0,1,16,135,1036, 7700,53964,38646,2636920,\dots \\
      \hline
    \end{array}$
    \caption{
      Popularity of patterns $p$ of length two in $\mathcal{S}_{n,2}$ and $\mathcal{S}_{n,n}$.}
    \label{tabpop0}
 \end{table}
\medskip

Statistic correspondences for other patterns can be obtained using a method similar to that of Theorem \ref{faropat}. Therefore, we list directly (without proof) in Theorem \ref{faropatbis} the $f$-images of all statistics associated to a pattern of length three.  It is worth noting that the reverse-complement $\chi$ is a bijection on $\mathcal{S}_{n,k}$, which proves that the statistics $\m{112}$ and $\m{122}$ (resp. $\m{121}$ and $\m{212}$, resp.  $\m{132}$ and $\m{213}$) have the same distribution on $\mathcal{S}_{n,k}$.  

\begin{thm}
  \label{faropatbis}
  For $n\geq 0$, the bijection $f$ from $\s_{n,k}$ to  $\B_{n+2(k-1),k-1}$ translates statistics associated to patterns of length three as follows: 
  \begingroup
  \allowdisplaybreaks
  \begin{align*}
    f(\m{111}) & = \m{FFF},\\
    f(\m{112}) & = \m{FF(UD)^+F} + \m{FF(UD)^*UU} ,\\
    f(\m{122}) & = \m{F(UD)^+FF} + \m{DD(UD)^*FF} ,\\
    f(\m{121}) & = \m{FUU} + \m{UUU} ,\\
    f(\m{212}) & = \m{DDF} + \m{DDD} ,\\
    f(\m{132}) & = \m{F(UD)^+UU} + \m{U(UD)^+UU} + \m{DD(UD)^*UU} ,\\
    f(\m{213}) & = \m{DD(UD)^+F} + \m{DD(UD)^+D} + \m{DD(UD)^*UU} ,\\
    f(\m{123}) & = \m{DD(UD)^*F(UD)^*UU} + \m{DD(UD)^*F(UD)^+F} \\
    &\phantom{=} \; + \m{F(UD)^+F(UD)^*UU} + \m{F(UD)^+F(UD)^+F},\\
    f(\m{211})&=f(\m{221})=f(\m{231}) = f(\m{312}) = f(\m{321}) =\mc{0}.\\
  \end{align*}
  \endgroup
\end{thm}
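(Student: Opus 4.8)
The plan is to rerun, one pattern at a time, the argument that proved Theorem~\ref{faropat}, now for consecutive \emph{triples} of positions. Fix a length‑three word pattern $p=p_1p_2p_3$. If $p_1>p_3$ then, since every faro word satisfies $w_i\le w_{i+2}$, no window $w_iw_{i+1}w_{i+2}$ can be order‑isomorphic to $p$; this already disposes of $211,221,231,312,321$ — which are exactly the length‑three word patterns with $p_1>p_3$ — and hence of the last line of the statement, these patterns occurring in no faro word and in particular in no faro permutation. For the remaining $p$, a consecutive occurrence $w_iw_{i+1}w_{i+2}$ must lie inside the decomposition of $w$ into singletons and pairs, and up to the location of the two relevant block boundaries there are exactly eight templates: three consecutive singletons $(a)(b)(c)$ and its three refinements $(\e a)(b)(c)$, $(a)(b)(c\e)$, $(\e a)(b)(c\e)$ in which a flanking position is the matched half of a neighbouring pair; a pair then a singleton $(ba)(c)$; a singleton then a pair $(a)(cb)$; the matched half of a pair then a full pair $(\e a)(cb)$; and a full pair then the matched half of a pair $(ba)(c\e)$. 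First I would, for each admissible $p$, list which of these eight templates are compatible with the relative order $p$ together with the inequalities built into every pair.

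The translation of a compatible occurrence into a factor of $f(w)$ is then the same mechanism as in Subcases~1--3 of the proof of Theorem~\ref{faropat}. By Remark~\ref{uni} the word $w$ is a multichain $b_1\preceq\dots\preceq b_\ell$ in $\LAT_k$, so requiring two blocks to be adjacent in $w$ forces every block strictly between them in $\LAT_k$ to be absent: an intervening singleton $x$ makes $T_{3(x-1)}=0$ (no level step on that stretch of $f(w)$), an intervening pair of shape $(x\e)$ or $(\e x)$ makes the corresponding $T_i$ equal to $1$, so the associated up‑ or down‑run has length exactly one, and the two endpoint blocks force the runs containing them to have length at least two. Reading this off the run‑length encoding $T=T_0T_1\dots T_{3(k-1)}$ turns the occurrence into a factor of $f(w)$ whose two ends are fixed by the endpoint blocks (a single $F$ for a singleton, an $FF$ for two consecutive singletons, a $D$, $DD$ or $UU$ for a pair according to which half is involved) and whose interior is a block $(UD)^m$ produced by the forced absences, with $m\ge0$ or $m\ge1$ according to whether the $m=0$ factor is already counted by an adjacent template — precisely the $(UD)^*$ versus $(UD)^+$ bookkeeping that split off $DD(UD)^0D$ there. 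Summing over the compatible templates and over $m$ yields each displayed identity. The short ones are degenerate instances: $f(\m{111})=\m{FFF}$ is three consecutive singletons; $f(\m{121})=\m{FUU}+\m{UUU}$ records, respectively, a singleton $x$ immediately preceding the first pair with second letter $x$ and two consecutive pairs sharing the same second letter; and $f(\m{212})=\m{DDF}+\m{DDD}$ is the mirror situation on the down side.

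Before grinding through the longer patterns I would establish the auxiliary fact, by a direct computation with the encoding $T$, that $f$ intertwines the reverse‑complement $\chi$ on $\s_{n,k}$ with the involution $\rho$ on dispersed Dyck paths that reverses the step sequence and exchanges $U$ and $D$: $\chi$ sends the singleton $x$ to the singleton $k+1-x$ and the pair $ji$ to the pair $(k+1-i)(k+1-j)$, and reverses the order of the blocks, which is exactly what reading $f$ from both ends with $U$ and $D$ swapped produces, and $\rho$ visibly preserves length and number of peaks. Since $\chi$ fixes $111$ and $123$ and identifies $112$ with $122$, $121$ with $212$, and $132$ with $213$, and since $\rho$ carries the claimed $f$‑image of each of these to the claimed $f$‑image of its partner, it then suffices to prove the five identities for $111$, $112$, $121$, $132$ and $123$ directly; the images of $122$, $212$, $213$ follow by applying $\rho$.

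The step I expect to be the real work is purely organizational, and it is heaviest for $123$: there several templates are simultaneously admissible — a pair then a pair with peaks in between, a pair then a singleton then a pair, a pair then a pair then a pair, and so on — each carrying its own list of forbidden intervening blocks, and one must check that the resulting path factors are exhaustive and pairwise non‑overlapping so that the displayed sum is exact, and that the $(UD)^*$ versus $(UD)^+$ choice is consistent across the boundary between neighbouring cases. No idea beyond the proof of Theorem~\ref{faropat} is needed — which is exactly why Theorem~\ref{faropatbis} is recorded without proof — but the case analysis has to be carried out with some care.
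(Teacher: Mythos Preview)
Your proposal is correct and is exactly the approach the paper indicates: Theorem~\ref{faropatbis} is stated there without proof, with only the remark that the identities follow by the method of Theorem~\ref{faropat}, which is precisely the block-template case analysis you outline. Your use of the intertwining $f\circ\chi=\rho\circ f$ (with $\rho$ the reverse-and-swap-$U$/$D$ involution on dispersed Dyck paths) to halve the casework is a clean refinement; the paper mentions $\chi$ only to record the resulting equidistributions $\m{112}\sim\m{122}$, $\m{121}\sim\m{212}$, $\m{132}\sim\m{213}$, without making the path-side involution explicit.
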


It would be interesting to see how the method developed
in~\cite{Band2, Band1} could be applied to obtain more pattern distributions in dispersed Dyck paths, but this is beyond the scope of the present paper. The multivariate generating functions for patterns of length three are quite technical, not particularly interesting and laborious to obtain. So, we decide to leave them as an exercise for the reader. 

%proofread through here -- AB

\section{Patterns in faro permutations} \label{sec:pat_faro_perm}

We say that a $k$-ary faro word $w$ of length $n$ is \emph{injective}
(resp. \emph{surjective}) if and only if any value in $w$ appears only
once in $w$ (resp. any value $x\in[1,k]$ appears in $w$). A \emph{faro
permutation} of length $n$ is an $n$-ary faro word that is both
injective and surjective. Let $\p_n$ be the set of length $n$ faro
permutations. For instance, we have $\p_3=\{123,132,213\}$. Since faro
permutations are entirely determined by the choice of their values on
the odd indices, the cardinality of $\p_n$ is $\binom{n}{\left\lfloor
  n/2\right\rfloor}$.
Note that  faro permutations are permutations avoiding the three consecutive patterns $231$, $321$ and
$312$ (see Remark~\ref{excpat}).

\begin{thm}
  The bijection $f$ maps surjective $k$-ary faro
  words of length $n$ onto dispersed Dyck paths in $\B_{n+2(k-1),k-1}$ avoiding $UDUD$ that neither start nor end with $UD$.
\end{thm}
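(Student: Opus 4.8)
The plan is to reduce everything to a local, pattern‑by‑pattern dictionary between \emph{which letters occur in $w$} and \emph{which forced local configurations of $f(w)$ are broken}. Since Theorem~\ref{farobij} already tells us that $f\colon\s_{n,k}\to\B_{n+2(k-1),k-1}$ is a bijection, it suffices to prove that, for $w\in\s_{n,k}$, the word $w$ is surjective if and only if $f(w)$ avoids $UDUD$ and neither starts nor ends with $UD$; $f$ then automatically restricts to the asserted bijection between the two families.

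First I would make the shape of $f(w)$ fully explicit. By definition $f(w)=F^{T_0}U^{T_1}D^{T_2}F^{T_3}\cdots F^{T_{3(k-2)}}U^{T_{3(k-2)+1}}D^{T_{3(k-2)+2}}F^{T_{3(k-1)}}$ is a concatenation of $k-1$ pyramids $U^{a_j}D^{b_j}$ with $a_j=T_{3(j-1)+1}\ge 1$ and $b_j=T_{3(j-1)+2}\ge 1$, separated and flanked by (possibly empty) runs of level steps $F^{c_j}$ with $c_j=T_{3j}$ for $0\le j\le k-1$. Translating the three clauses defining the $T_i$, for each value $x\in[1,k]$ the number of singletons $x$ in $w$ equals $c_{x-1}$, the number of pairs $xy$ with $y<x$ equals $b_{x-1}-1$ (there is no such pair when $x=1$), and the number of pairs $yx$ with $y>x$ equals $a_x-1$ (there is no such pair when $x=k$). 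Hence $x$ occurs in $w$ if and only if: $c_0+a_1\ge 2$ when $x=1$; $b_{x-1}+c_{x-1}+a_x\ge 3$ when $2\le x\le k-1$; $b_{k-1}+c_{k-1}\ge 2$ when $x=k$. Because every $a_j$ and $b_j$ is at least $1$, each of these inequalities is simply the negation of a single rigid equality.

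Next I would read those equalities directly off the path. The path $f(w)$ starts with $UD$ exactly when $c_0=0$ and $a_1=1$ (so the first pyramid is $UD^{b_1}$ with $b_1\ge1$), that is, exactly when the letter $1$ is missing from $w$; symmetrically, $f(w)$ ends with $UD$ exactly when $c_{k-1}=0$ and $b_{k-1}=1$, i.e.\ exactly when the letter $k$ is missing. For the interior letters I would use the structural observation that \emph{every} occurrence of $UDUD$ in $f(w)$ straddles a junction of two consecutive pyramids: each pyramid $U^{a_j}D^{b_j}$ contains exactly one factor $UD$, so a factor $UDUD$ must consist of the $UD$ of some pyramid $j$ immediately followed by the $UD$ of pyramid $j+1$, which forces $b_j=1$, $c_j=0$ and $a_{j+1}=1$ (a longer down‑run of pyramid $j$, a nonempty $F$‑run in between, or a longer up‑run of pyramid $j+1$ each destroys the factor). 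Thus $f(w)$ contains $UDUD$ if and only if $b_{x-1}=1$, $c_{x-1}=0$, $a_x=1$ for some $x$ with $2\le x\le k-1$, i.e.\ if and only if some interior letter $x$ is absent from $w$. Combining the three cases, $w$ is surjective precisely when $f(w)$ avoids $UDUD$ and neither starts nor ends with $UD$, and Theorem~\ref{farobij} finishes the proof.

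The only genuine work is the bookkeeping in the second paragraph: correctly aligning the three index families of $T$ ($F$‑runs, $U$‑runs, $D$‑runs) with singletons, ``descending'' pairs $xy$ and ``ascending'' pairs $yx$, and then with the boundary conventions at $x=1$ and $x=k$. The structural lemma that a $UDUD$ factor can live only at a pyramid junction is immediate once the run‑length encoding is written out. I do not expect any serious obstacle; the argument is essentially the dictionary ``letter $x$ occurs in $w$'' $\Longleftrightarrow$ ``the forced local pattern of $f(w)$ at the $x$‑th site is broken''.
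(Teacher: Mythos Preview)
Your proof is correct and follows essentially the same route as the paper: both translate ``the letter $x$ occurs in $w$'' into the run-length inequalities $T_0+T_1>1$, $T_{3(x-1)-1}+T_{3(x-1)}+T_{3(x-1)+1}>2$, $T_{3(k-1)-1}+T_{3(k-1)}>1$ (your $c_0+a_1\ge 2$, $b_{x-1}+c_{x-1}+a_x\ge 3$, $b_{k-1}+c_{k-1}\ge 2$), and then read these off the path as the negations of ``starts with $UD$'', ``contains $UDUD$'', ``ends with $UD$''. Your explicit remark that any $UDUD$ factor must straddle a pyramid junction is a welcome clarification the paper leaves implicit.
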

  
\begin{proof}
  Using the definition of the bijection $f$ and in particular the definition of the sequence $T$, surjective faro words are those that have a sequence $T$ satisfying (i) $T_0+T_1>1$, (ii)
  $T_{3(x-1)-1}+T_{3(x-1)}+T_{3(x-1)+1}>2$
  for  $x \in [2, k-1]$, and (iii)
  $T_{3(k-1)-1}+T_{3(k-1)}>1$. Since $T_1\geq 1$, the condition (i) is equivalent to $T_0\neq 0$, or $T_0=0$ and $T_1>1$, which means that $f(w)$ does not start with $UD$. Similarly, the condition (iii) is equivalent to the fact that $f(w)$ does not end with $UD$. Since $T_{3(x-1)-1}\geq 1$ and $T_{3(x-1)+1}\geq 1$, the condition (ii) is equivalent to $T_{3(x-1)}>0$, or $T_{3(x-1)}=0$ and $T_{3(x-1)-1}+T_{3(x-1)+1}>2$, which means that $f(w)$ does not contain any occurrence of $UDUD$.
\end{proof}
  
\begin{thm}
  The bijection $f$ maps injective $k$-ary faro words
  of length $n$ into dispersed Dyck paths in $\B_{n+2(k-1),k-1}$ avoiding the patterns $FF$, $DDD$, $UUU$, $DDF$, $FUU$, and $DDUU$.
\end{thm}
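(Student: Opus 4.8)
The plan is to prove the contrapositive: if the dispersed Dyck path $f(w)$ contains one of the six patterns $FF$, $DDD$, $UUU$, $DDF$, $FUU$, $DDUU$ as a factor (a block of consecutive steps), then $w$ is not injective, i.e.\ some letter occurs at least twice in $w$. The bridge between the two sides is a formula reading letter multiplicities off the run-length vector $T = T_0 T_1 \cdots T_{3(k-1)}$ of $f(w)$. For $x \in [1,k]$ let $s_x$ be the number of singletons equal to $x$ in $w$, let $\ell_x$ be the number of pairs of $w$ in which $x$ is the larger letter, and let $r_x$ be the number of pairs in which $x$ is the smaller letter (so $\ell_1 = r_k = 0$, since no pair can have $1$ as larger letter or $k$ as smaller letter). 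Then $x$ occurs exactly $s_x + \ell_x + r_x$ times in $w$, and, directly from the definition of $f$, $T_{3(x-1)} = s_x$ for $x \in [1,k]$, $T_{3(x-1)-1} = 1 + \ell_x$ for $x \in [2,k]$, and $T_{3(x-1)+1} = 1 + r_x$ for $x \in [1,k-1]$. Consequently $w$ is injective if and only if $s_x + \ell_x + r_x \le 1$ for all $x$.

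Next I would invoke the shape of $f(w)$ recalled just before Theorem~\ref{farobij}: $f(w) = F^{T_0}U^{T_1}D^{T_2}F^{T_3}\cdots F^{T_{3(k-1)}}$, the successive maximal runs of level, up, and down steps have lengths given by the entries of $T$ at indices $\equiv 0$, $1$, $2 \pmod 3$ respectively, and $T_i \ge 1$ whenever $1 \le i \le 3(k-1)-1$ and $i \not\equiv 0 \pmod 3$. In particular the maximal up-runs are exactly the blocks $U^{T_{3m+1}}$ and the maximal down-runs the blocks $D^{T_{3m+2}}$; a level-run is always immediately followed by a non-empty up-run (unless it is the last run), an up-run always by a non-empty down-run, and a down-run by the next level-run --- which may be empty, in which case the following non-empty up-run comes right after it. Using this I translate each forbidden pattern, which must occur in $f(w)$ as a factor, into a constraint on $T$. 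An occurrence of $FF$ forces $T_{3(x-1)} \ge 2$ for some $x \in [1,k]$. An occurrence of $UUU$ (resp.\ $DDD$) sits inside one maximal up-run (resp.\ down-run) and forces $T_{3(x-1)+1} \ge 3$ for some $x \le k-1$ (resp.\ $T_{3(x-1)-1} \ge 3$ for some $x \ge 2$). In an occurrence of $FUU$ the leading $F$ must be the last step of a maximal level-run $F^{T_{3m}}$, which is then immediately followed by the maximal up-run $U^{T_{3m+1}}$ containing the two $U$'s; so $T_{3m} \ge 1$ and $T_{3m+1} \ge 2$, i.e.\ $T_{3(x-1)} \ge 1$ and $T_{3(x-1)+1} \ge 2$ for some $x \le k-1$. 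Symmetrically, an occurrence of $DDF$ forces $T_{3(x-1)-1} \ge 2$ and $T_{3(x-1)} \ge 1$ for some $x \ge 2$. Finally, in an occurrence of $DDUU$ the two $D$'s end a maximal down-run which is directly followed (the intervening level-run being empty) by a maximal up-run of length $\ge 2$; so $T_{3(x-1)-1} \ge 2$ and $T_{3(x-1)+1} \ge 2$ for some $x$ with $2 \le x \le k-1$.

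Plugging these constraints into the multiplicity formula closes the argument: $FF$ yields $s_x \ge 2$; $UUU$ yields $r_x \ge 2$; $DDD$ yields $\ell_x \ge 2$; $FUU$ yields $s_x \ge 1$ and $r_x \ge 1$; $DDF$ yields $\ell_x \ge 1$ and $s_x \ge 1$; and $DDUU$ yields $\ell_x \ge 1$ and $r_x \ge 1$, where the latter two quantities count disjoint sets of pairs (a letter cannot be both the larger and the smaller element of one pair). In each case $s_x + \ell_x + r_x \ge 2$, so $w$ is not injective; this is the contrapositive of the claim, and since $f(w) \in \B_{n+2(k-1),k-1}$ by Theorem~\ref{farobij}, the theorem follows. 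The only step requiring real care is the translation in the second paragraph: one must track which runs are guaranteed non-empty and handle the boundary indices $x = 1$ and $x = k$ (where $\ell_1$ and $r_k$ are vacuous) so that the pattern, being a factor, really does occupy the claimed pair of adjacent runs; the rest is routine bookkeeping.
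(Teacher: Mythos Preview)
Your proof is correct and follows essentially the same approach as the paper: both arguments translate each forbidden pattern into a condition on the run-length vector $T$ and then into a lower bound on the multiplicity of some letter $x$. You frame it as a contrapositive and introduce the convenient notation $s_x,\ell_x,r_x$, but the underlying idea is identical. If anything, your write-up is more complete than the paper's terse proof, which lists only five conditions (i)--(v) on $T$ and never explicitly treats the pattern $DDUU$ (the case $\ell_x\ge 1$ and $r_x\ge 1$ with $s_x=0$); you handle that case correctly.
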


\begin{proof}
   Using the definition of $f$,
  injective faro words are those that have a sequence $T$ satisfying
  (i) $T_{3(x-1)}<2$, $x\in[1,k]$; (ii) $T_{3(x-1)-1}<3$, $x\in[2,k]$;
  (iii) $T_{3(x-1)+1}<3$, $x\in[1,k-1]$;
  (iv) $T_{3(x-1)-1}+T_{3(x-1)}<3$, $x\in[2,k]$;
  (v)  $T_{3(x-1)}+T_{3(x-1)+1}<3$, for $x\in[1,k-1]$; and
  (vi) $T_{3(x-1)-1}+T_{3(x-1)+1}<3$, for $x\in[2,k-1]$.
  It means that $f(w)$ avoids,
  respectively, the patterns $FF$, $DDD$, $UUU$, $DDF$, $FUU$ and $DDUU$.
\end{proof}

\begin{thm}
  The image by $f$ of $\p_n$ is the subset $B'_{3n-2,n-1}$ of
  dispersed Dyck paths in $B_{3n-2,n-1}$ that neither start nor end
  with $UD$ and where any two consecutive occurrences of $UD$ are
  separated by exactly one step.
\end{thm}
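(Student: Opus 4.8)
The plan is to realize $f(\p_n)$ as the image of the $n$-ary faro words of length $n$ that are both injective and surjective, to apply the two preceding theorems -- each of which actually characterizes the image in question -- and then to match the resulting list of forbidden configurations with the description of $B'_{3n-2,n-1}$. Concretely: since $f$ is a bijection from $\s_{n,n}$ onto $\B_{3n-2,n-1}$ by Theorem~\ref{farobij}, and a faro permutation is precisely an $n$-ary faro word of length $n$ that is both injective and surjective, the two previous theorems with $k=n$ tell us that $f(\p_n)$ is exactly the set of $P\in\B_{3n-2,n-1}$ such that (a) $P$ neither starts nor ends with $UD$, (b) $P$ avoids $UDUD$, and (c) $P$ avoids each of $FF$, $DDD$, $UUU$, $DDF$, $FUU$, $DDUU$. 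So it suffices to show that, for $P\in\B_{3n-2,n-1}$, the conjunction of (a), (b), (c) is equivalent to the conjunction of (a) and (b$'$), where (b$'$) asks that any two consecutive occurrences of $UD$ be separated by exactly one step.

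For the implication ``(a)+(b)+(c) $\Rightarrow$ (b$'$)'' I would work inside the run-length encoding $f(w)=F^{T_0}U^{T_1}D^{T_2}\cdots F^{T_{3(k-1)}}$, in which the $k-1=n-1$ peaks occur exactly at the junctions of the runs $U^{T_{3j+1}}$ and $D^{T_{3j+2}}$, so two consecutive peaks are separated by $D^{T_{3j+2}-1}F^{T_{3j+3}}U^{T_{3(j+1)+1}-1}$, of total length $\ell=(T_{3j+2}-1)+T_{3j+3}+(T_{3(j+1)+1}-1)$. Avoiding $FF$ gives $T_{3j+3}\le1$. If $T_{3j+3}=1$, then avoiding $DDF$ forces $T_{3j+2}\le1$ and avoiding $FUU$ forces $T_{3(j+1)+1}\le1$, so $\ell=1$; if $T_{3j+3}=0$, then avoiding $DDD$ and $UUU$ gives $T_{3j+2},T_{3(j+1)+1}\le2$, while avoiding $DDUU$ forbids both from being $2$, so $\ell\le1$. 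Finally $\ell\ge1$ because $\ell=0$ would create a $UDUD$ factor, contradicting (b); hence $\ell=1$ for every pair of consecutive peaks, which is (b$'$).

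For the converse I would first establish a normal form: any $P\in\B_{3n-2,n-1}$ satisfying (a) and (b$'$) can be written $P=s_0\,(UD)\,s_1\,(UD)\,s_2\cdots(UD)\,s_{n-1}$ with each $s_i$ a single step. Indeed, the $n-1$ peaks are pairwise non-overlapping and occupy $2(n-1)$ of the $3n-2$ steps, leaving $n$ steps distributed among $n$ gaps (before the first peak, between consecutive peaks, after the last peak); by (b$'$) the $n-2$ interior gaps each contain exactly one step, so the two extreme gaps share the remaining $2$ steps, and by (a) neither is empty, hence each has length exactly $1$. Granting this, (b) and (c) follow by reading off neighbours in the sequence $s_0,U,D,s_1,U,D,\ldots,s_{n-2},U,D,s_{n-1}$: each $F$ is an isolated $s_i$ whose neighbours, when they exist, are the $D$ closing the previous peak and the $U$ opening the next one, ruling out $FF$, $DDF$, $FUU$; the only factors of two equal steps are $DD$ (a peak's $D$ followed by $s_i=D$, in turn followed by a $U$) and $UU$ ($s_i=U$ followed by a peak's $U$, in turn preceded by a $D$), ruling out $DDD$, $UUU$, $DDUU$; and consecutive peaks are never adjacent, ruling out $UDUD$. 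This yields $f(\p_n)=B'_{3n-2,n-1}$, the cases $n\le1$ being immediate.

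The one step I expect to require genuine care is the normal form in the converse: one must notice that ``$n-1$ peaks, length $3n-2$, no two consecutive peaks more than one step apart, and no peak at either end'' already forces every gap, including the two extremal ones, to have length exactly one. After that, both directions are finite checks against the six forbidden patterns of the injectivity theorem, and the forward direction is simply careful index bookkeeping in the encoding $T$.
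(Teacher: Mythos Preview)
Your proposal is correct and follows exactly the paper's approach: combine the surjectivity and injectivity theorems (with $k=n$) to obtain the list of forbidden configurations $\{UDUD,\,FF,\,DDD,\,UUU,\,DDF,\,FUU,\,DDUU\}$ together with the boundary condition~(a), and then identify this with the condition~(b$'$) that consecutive peaks are separated by exactly one step. The paper's own proof is a single sentence asserting this equivalence without justification; your forward direction via the $T$-encoding and your converse via the counting normal form $s_0(UD)s_1\cdots(UD)s_{n-1}$ are precisely the details that the paper leaves to the reader.
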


\begin{proof}
  The two previous theorems imply that $f(\p_n)$ is the set of
  dispersed Dyck paths in $B_{3n-2,n-1}$ that neither start nor end
  with $UD$ and that avoid the patterns $FF$, $DDD$, $UUU$, $DDUU$,
  $UDUD$, $DDF$ and $FUU$, which is exactly the dispersed Dyck paths
  that neither start nor end with $UD$ and where any two consecutive
  occurrences of $UD$ are separated by exactly one step. Indeed, in any dispersed Dyck path, the subpath between two consecutive occurrences of $UD$ is necessarily of the form $D^iF^jU^k$ for
  $i,j,k\geq 0$. Then, the avoidance of $FF$, $DDD$ and $UUU$ implies that $i,j,k\leq 1$, and the avoidance of the other patterns implies that $i+j+k=1$ as claimed.    
\end{proof}
    
  Thus, we deduce a one-to-one correspondence $g$
  between length $n$ faro permutations and dispersed Dyck paths of length $n$, where $g(p)$ is obtained from $p\in\p_n$ by removing all occurrences of $UD$ in $f(p)$. For instance, if $p=1243576$ then $f(p)=FUDFUDUUDDUDFUDUUDD$ and $g(p)=FFUDFUD$.  

\begin{thm}
  \label{permbij}
  For $n\geq 0$, the bijection $g$ from $\p_n$  to 
  $\B_n$ transports the pattern statistics as follows:
  \begingroup
  \allowdisplaybreaks
  \begin{align*}
    g(\m{21}) & = \m{U},\\
    g(\m{12}) & = \m{DU} + \m{DD} + \m{DF} + \m{FF} + \m{FU},\\
              & = \mc n - \mc 1 - \m{U},\\
    g(\m{132}) & = \m{FU} + \m{UU} + \m{DU},\\
    g(\m{213}) & = \m{DF} + \m{DD} + \m{DU}, \\
    g(\m{123}) & = \m{DFU} + \m{DFF} + \m{FFU} + \m{FFF},\\
               & = \mc n - \mc 2 - \m{FU}-\m{UU}-2~\m{DU}-\m{DF}-\m{DD},\\
     g(\m{231}) &= g(\m{312}) = g(\m{321}) =\mc {0}.\\
  \end{align*}
  \endgroup
\end{thm}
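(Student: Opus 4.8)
The plan is to do no new path-side combinatorics, but instead to exploit two soft facts: the defining inequality $p_i\le p_{i+2}$ of faro words, and that $g(p)$ is literally $f(p)$ with its $n-1$ peaks deleted. First I would assemble a short dictionary between $p$ and $g(p)$. Since the $n-1$ deleted steps are all $U$'s and $D$'s, the level steps of $g(p)$ are exactly those of $f(p)$, so $\m F(g(p))$ equals the number of singletons of $p$; and since $g(p)\in\B_n$ is a dispersed Dyck path of length $n$ it has as many $U$- as $D$-steps, whence $\m U(g(p))=\m D(g(p))=\tfrac12(n-\m F(g(p)))=\#\{\text{pairs of }p\}=\m{21}(p)$. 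For the endpoints I use that, by $p_i\le p_{i+2}$, value $1$ sits in position $1$ or $2$, so value $1$ is a singleton iff $p_1<p_2$; and the first surviving (non-peak) step of $f(p)$ is $F$ precisely when $F^{T_0}$ is non-empty, i.e. precisely when value $1$ is a singleton (if $T_0=0$ then value $1$ lies in a pair, which forces $T_1=2$, so the surviving leading step is a $U$). Hence the first step of $g(p)$ is $F$ if $p_1<p_2$ and $U$ if $p_1>p_2$, and symmetrically the last step is $F$ if $p_{n-1}<p_n$ and $D$ if $p_{n-1}>p_n$; these exhaust the cases since in any dispersed Dyck path the first step is $F$ or $U$ and the last is $F$ or $D$. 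I also record the trivial facts that $UF$ and $FD$ never occur as consecutive steps of a dispersed Dyck path.

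With this dictionary the length-two lines and the vanishing lines are immediate. Every consecutive $21$ of $p$ is a pair, so $\m{21}(p)=\m U(g(p))$. In a permutation each of the $n-1$ adjacencies is an ascent or a descent, so $\m{12}(p)=\m{n-1}-\m{21}(p)=\m{n-1}-\m U$; and on a dispersed Dyck path the sum $\m{DU}+\m{DD}+\m{DF}+\m{FF}+\m{FU}$ counts the adjacent step pairs whose first entry is $D$ or $F$ (the pair $FD$ being impossible), which is the number of steps in positions $1,\dots,n-1$ that are not $U$, namely $(n-1)-\m U$. Finally, a consecutive $231$, $312$, or $321$ would force $w_i>w_{i+2}$, impossible in a faro word, so these three statistics vanish identically on $\p_n$ and $g$ carries them to $\m 0$.

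For the three genuine length-three identities, the faro inequality converts each into a count of decorated descents or singletons of $p$, which is then transported by the dictionary. A consecutive $132$ at $(i,i+1,i+2)$ is exactly a descent at $(i+1,i+2)$ together with a left neighbour; since $p_{i}\le p_{i+2}$ (and $p$ is injective) the condition $p_i<p_{i+2}<p_{i+1}$ is automatic once there is a descent at $(i+1,i+2)$ and a position $i\ge1$, so occurrences of $132$ biject with descents not starting at position $1$, giving $\m{132}(p)=\m{21}(p)-[p_1>p_2]=\m U(g(p))-[\text{first step of }g(p)\text{ is }U]$, which is the number of $U$-steps of $g(p)$ other than the first, i.e. $\m{FU}+\m{UU}+\m{DU}$. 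Symmetrically $\m{213}(p)=\m{21}(p)-[p_{n-1}>p_n]$, the number of $D$-steps other than the last, i.e. $\m{DF}+\m{DD}+\m{DU}$. A consecutive $123$ at $(i,i+1,i+2)$ means (again because $p_i<p_{i+2}$ is automatic) that $p_{i+1}$ is a double ascent, hence an interior singleton, so $\m{123}(p)$ equals the number of singletons of $p$ in positions $2,\dots,n-1$, which is $\m F(g(p))$ minus the indicators that the first and last steps of $g(p)$ are $F$; because $UF$ and $FD$ cannot occur, these are precisely the $F$-steps that are the middle letter of a length-three window, i.e. $\m{DFU}+\m{DFF}+\m{FFU}+\m{FFF}$. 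The alternative ``$\m{n-1}-\cdots$'' and ``$\m{n-2}-\cdots$'' forms then follow by the same last-step and middle-step bookkeeping applied to the complementary adjacent pairs and length-three windows.

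The only step requiring genuine care is the endpoint part of the dictionary: confining value $1$ (resp. value $n$) to the first two (resp. last two) positions, and matching ``value $1$ is a singleton'' with ``the first step of $g(p)$ is $F$'' directly from the encoding $T$ (in particular the implication $T_0=0\Rightarrow T_1=2$). Everything else is routine counting, and the small cases $n\le 2$ are disposed of by direct inspection.
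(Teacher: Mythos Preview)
Your proof is correct, but it takes a genuinely different route from the paper. The paper's own argument is a one-line reduction: since $g$ is $f$ followed by deletion of all $n-1$ peaks, one simply takes the formulas already established in Theorems~\ref{faropat} and~\ref{faropatbis} for how $f$ transports length-two and length-three pattern statistics on general $k$-ary faro words, and mechanically simplifies them by erasing the $(UD)^*$ and $(UD)^+$ blocks and collapsing $UU$ to $U$ and $DD$ to $D$.

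You instead argue directly and self-containedly, never invoking the word-level theorems. Your key observation is that on \emph{permutations} the faro constraint $p_i\le p_{i+2}$ is strict, which forces a rigidity not present for words: any descent with a left neighbour is automatically a consecutive $132$, any descent with a right neighbour is automatically a $213$, and any double ascent is a $123$. Combined with your small dictionary (pairs $\leftrightarrow$ $U$-steps, singletons $\leftrightarrow$ $F$-steps, and the endpoint correspondence between ``$p_1<p_2$'' and ``first step $F$'', resp.\ ``$p_{n-1}<p_n$'' and ``last step $F$''), all the identities then follow from elementary step-counting on $\B_n$ together with the trivial exclusions $UF,FD$. The paper's approach has the virtue of being a free corollary of machinery already built for general faro words; yours is more transparent for permutations specifically, avoids the somewhat intricate $DD(UD)^*UU$-type patterns that appear upstream, and makes it immediately visible \emph{why} $g(\m{132})$ and $g(\m{213})$ are just $\m U$ and $\m D$ with a boundary correction.
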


\begin{proof}
   For a faro permutation $w$,
   $g(w)$ is obtained from $f(w)$ by removing all peaks $UD$.
   In $f(w)$ consecutive occurrences of $UD$ are separated by one letter exactly, and no $U^k$, $D^k$ or $F^j$ exists for $k \ge 3, j \ge 2$.
   The last $U$ in $U^2$ and the first $D$ in $D^2$ must be a part of an occurrence of $UD$.
   The claimed statistic equations are obtained from Theorem~\ref{faropat}
  and Theorem~\ref{faropatbis} by deleting peaks $UD$ and replacing all remaining 
   $U^2$ (resp. $D^2$) with $U$ (resp. $D$) in all considered pattern statistics.
   Only two (resp. three) patterns of length
  2 (resp. 3) are possible in a faro permutation, namely $12$ and $21$
  (resp. $123$, $132$ and $213$).  In any $n$-length word there is
  $n-1$ (resp. $n-2$) occurrences of patterns of length $2$
  (resp. $3$). It follows that $g(\m{12}) = \mc n - \mc 1 - \m{U}$ and
  $g(\m{123}) = \mc n - \mc 2 - \m{FU}-\m{UU}-2~\m{DU}-\m{DF}-\m{DD}$.
  
\end{proof}

\begin{thm}
  \label{faropermgenfunc}
For $p\in\{21,12,132,213,123\}$, the bivariate generating functions
$K_p(x,y,z)$, where the coefficient at $x^ny^k$ is the number of faro
permutations of length $n$ containing exactly $k$ occurrences of the
pattern $p$, are:
\begingroup \allowdisplaybreaks
     \begin{align*}
        K_{21}(x,y)  & = \frac{2}{1-2x+\sqrt {1-4\,{x}^{2}y} }, \\
        K_{12}(x,y)  & = {\frac {1+y+2xy-2\,x{y}^{2}+(y-1)\sqrt {1-4\,{x}^{2}y}}{y \left(1 -2\,xy+\sqrt {1-4\,{x}^{2}y} \right) }}, \\
        K_{132}(x,y) & = {\frac {1+y+(y-1)\sqrt {1-4\,{x}^{2}y}}{y \left(1-2x+\sqrt {1-4\,{x}^{2}y} \right) }}, \\
        K_{213}(x,y) & = K_{132}(x,y), \\
        K_{123}(x,y) & = \frac{2+3x-3xy+2x^2-2 x^{2} y  -x(1-y) \sqrt{1 - 4 x^{2}}  }{1-2 x y + \sqrt{1 - 4 x^{2}} }.       
    \end{align*}
\endgroup
  \end{thm}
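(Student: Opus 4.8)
The plan is to translate each pattern statistic on $\p_n$ into a lattice-path statistic on $\B_n$ via the bijection $g$ of Theorem~\ref{permbij}, and then set up and solve functional equations using the classical recursive decomposition of dispersed Dyck paths. By Theorem~\ref{permbij}, we have $g(\m{21}) = \m U$, and $g(\m{132}) = g(\m{213}) = \m{FU}+\m{UU}+\m{DU}$, while $g(\m{12})$ and $g(\m{123})$ can be rewritten via the identities $g(\m{12}) = \m{n-1}-\m U$ and $g(\m{123}) = \m{n-2}-\m{FU}-\m{UU}-2\m{DU}-\m{DF}-\m{DD}$ of the same theorem. So it suffices to compute the distribution generating functions for $\m U$ and for the composite statistic $\m{FU}+\m{UU}+\m{DU}$ on $\B_n$; the functions $K_{12}$ and $K_{123}$ then follow from $K_{21}$ and $K_{132}$ respectively by the substitution-and-shift trick already used in the proof of Theorem~\ref{farogenfunc} (replace $x\mapsto xy^{?}$, $y\mapsto 1/y$, divide by the appropriate power of $y$, and correct the constant term).

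First I would handle $p=21$. Decompose $\D = \epsilon + UD\,\D + U(\D\setminus\epsilon)D\,\D$ and $\B = \epsilon + F\,\B + U(\D\setminus\epsilon)D\,\B$. Let $D(x,y)$ count Dyck paths by semilength (in $x^2$) and by number of up steps $\m U$; since every Dyck path of semilength $m\ge 1$ has exactly $m$ up steps, actually $\m U$ is constant on each $\D_m$, so $D(x,y)$ satisfies $D = 1 + x^2 y\,D + x^2 y\,(D-1)D$ — here each $U$ in the decomposition contributes a factor $y$ — giving a quadratic whose solution involves $\sqrt{1-4x^2y}$. Then $B_{21}(x,y)$ satisfies $B_{21} = 1 + x\,B_{21} + x^2 y\,(D-1)\,B_{21}$ (the first step of a nonempty hump is a $U$, contributing $y$, and the matching $D$ contributes nothing to $\m U$), and solving the linear equation yields exactly $K_{21}(x,y)=2/(1-2x+\sqrt{1-4x^2y})$ after simplification. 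For $p=132$ (equivalently $213$), I would track the composite statistic $\m q := \m{FU}+\m{UU}+\m{DU}$, which simply counts every up step that is immediately preceded by some step, i.e. every $U$ that is not the first step of the path. Refining the $\B$-decomposition to record whether the first step is an $F$ or a $U$, and within each nonempty hump $U(\D\setminus\epsilon)D$ tracking which up steps are internal (preceded) versus leading, I would obtain a functional equation for $B_{132}(x,y)$ of the same shape as above but with the leading $U$ of the first hump untracked; solving it should produce $K_{132}$.

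The substitution step for $K_{12}$ and $K_{123}$ is the mechanical part: from $\m{12} = \m{n-1} - \m{21}$ on $\p_n$ (valid because a faro permutation avoids $321$, so at each of the $n-1$ consecutive positions the pattern is either $12$ or $21$) one gets $K_{12}(x,y) = \frac{1}{y}\bigl(K_{21}(xy,\tfrac1y)\cdot\text{(correction)}\bigr)$; concretely, if $L(x,t)=\sum \#\{w : \m{21}(w)=j\}\,x^n t^j$ then $K_{21}(x,y)=L(x,y)$ and $K_{12}(x,y)=\sum x^n\,y^{\,(n-1)-\m{21}(w)} = \tfrac1y\,L(xy,\tfrac1y)$, and expanding $K_{21}(xy,1/y)$ and simplifying with $\sqrt{1-4(xy)^2/y}=\sqrt{1-4x^2y}$ gives the stated $K_{12}$. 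The same device applied to $\m{123}=\m{n-2}-\m{FU}-\m{UU}-2\m{DU}-\m{DF}-\m{DD}$ reduces $K_{123}$ to the joint distribution of that linear combination of path statistics; here one needs the joint generating function (not just $K_{132}$, because of the extra $\m{DF}+\m{DD}$ and the doubled $\m{DU}$), so a separate — but entirely analogous — functional equation tracking down-step descents as well must be solved, after which the substitution $x\mapsto xy$, $y\mapsto 1/y$ and a constant-term correction produce $K_{123}$; note the $\sqrt{1-4x^2}$ (rather than $\sqrt{1-4x^2y}$) in the final formula, which signals that after the substitution the $y$-dependence inside the radical cancels.

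\textbf{Main obstacle.} The genuinely delicate step is $K_{123}$: the relevant path statistic is a $2$-weighted, $5$-term linear combination mixing up-step and down-step adjacencies, so the recursive decomposition of $\B$ must simultaneously record the type of the first and last steps of each Dyck factor (to resolve the $DU$, $DD$, $DF$, $FU$ contributions at the seams between factors), leading to a small system of functional equations rather than a single one. Organizing that bookkeeping so the system closes, and then checking that its solution simplifies to the compact form with $\sqrt{1-4x^2}$, is where the real work lies; the cases $p\in\{231,312,321\}$ are immediate since $g$ maps those statistics to $\m 0$, so their generating functions are just $\sum_n |\p_n| x^n = \sum_n \binom{n}{\lfloor n/2\rfloor} x^n$ and are not listed.
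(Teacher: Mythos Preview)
Your approach is essentially the paper's: translate via $g$ to path statistics on $\B_n$, set up functional equations from the recursive decomposition of $\B$ and $\D$, and recover $K_{12}$ and $K_{123}$ from $K_{21}$ and from the distribution of $\m{132}+\m{213}$ by the substitution $x\mapsto xy$, $y\mapsto 1/y$ with a constant-term correction. One slip to fix: Theorem~\ref{permbij} gives $g(\m{213})=\m{DF}+\m{DD}+\m{DU}$, \emph{not} $\m{FU}+\m{UU}+\m{DU}$; the two path statistics are different but equidistributed (path reversal, mirroring reverse-complement on $\p_n$), so $K_{213}=K_{132}$ remains true, but your line ``$g(\m{132})=g(\m{213})$'' is false as stated. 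Your remark that $\m{FU}+\m{UU}+\m{DU}$ counts exactly the non-initial up steps is a genuine simplification over the paper, which instead introduces an auxiliary series $\overline B$ for paths starting with $F$ and solves a three-equation system; your route would yield $K_{132}$ with less bookkeeping. For $K_{123}$ the paper does exactly what you anticipate: it tracks the single sum statistic $g(\m{132}+\m{213})=\m{FU}+\m{UU}+2\m{DU}+\m{DF}+\m{DD}$ (not a joint distribution) via that same $(D,B,\overline B)$ system, then substitutes and corrects for $n\le 1$.
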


\begin{proof}
For $p=21$, Theorem~\ref{permbij} has $g(\m{21})=\m{U}$. So, we decompose the set of Dyck paths as $\D = \epsilon \uplus U \D D \D$, the set of dispersed Dyck paths as $\B = \epsilon \uplus F\B \uplus U \D  D \B$, and obtain the following system:
$$
  \begin{cases}
    D(x,y) = 1 + x^2 y D^2(x,y),\\
    B = 1 + x B(x,y) + x^2 y D(x,y) B(x,y),
  \end{cases}
$$
where $D(x,y)$ (resp. $B(x,y)$) is the generating function for the set of Dyck paths (resp. dispersed Dyck paths) with respect to the number of occurrences of $U$. Solving it, we obtain $K_{21}(x,y) = B(x,y)$. 

Since only the two length 2 patterns ($12$ and $21$) are possible
in a faro permutation, we have $\m {12} = \mc n - \mc 1 - \m
{21}$. Hence, $K_{12} (x,y) = (K_{21} (xy, \frac{1}{y}) - 1)/y + 1$.

Only tree patterns of length 3 are possible in a faro permutation,
$123$, $132$ and $213$, so we have $ \m{123} = \mc n - \mc 2 - \m{132}
- \m{213}$. By Theorem~\ref{permbij}, $g(\m{132} + \m{213}) = \m{FU} +
\m {UU} + 2~\m {DU} + \m {DF} + \m {DD}$.  We decompose the sets of
Dyck and dispersed Dyck paths as follows: 
\[
\begin{cases}
      \D &= \epsilon \uplus UD \uplus U (\D \setminus \epsilon) D \uplus UD (\D \setminus \epsilon ) \uplus U (\D \setminus \epsilon) D (\D \setminus \epsilon) ,\\
      \B &= \epsilon\uplus \overline\B \uplus
      U (\D \setminus \epsilon) D \uplus
      U (\D \setminus \epsilon) D \overline\B \uplus
      U (\D \setminus \epsilon) D (\B \setminus (\epsilon \uplus \overline\B)) \\
      &\phantom{=} \uplus \,
      U D \uplus
      U D \overline\B \uplus
      U D (\B \setminus (\epsilon \uplus \bar\B)), \\
      \overline\B &= \F \uplus \F (\B \setminus (\epsilon \uplus \overline\B)),
\end{cases}
\]
where $\F$ is the set of paths $F^k$, $k \ge 1$ and $\overline\B$ is the set of dispersed Dyck paths starting with a level step. From this decomposition we obtain the following system of functional equations:
\[
\begin{cases}
      D(x, y) & = 1 + x^2 + 2 x^2 y^2 \left(D(x,y) - 1\right) +  x^2 y^4 \left(D(x,y) - 1\right)^2, \\
      B(x, y) & = 1 + \overline B(x,y)
      + x^2y^2\left( D(x,y) - 1\right)
      + x^2y^3\left( D(x,y) - 1\right) \overline B(x,y) + \\
      &\phantom{=} + x^2y^4\left( D(x,y) - 1\right) \left(B(x,y) - \overline B(x,y) - 1\right)
      + x^2
      + x^2 y \overline B(x,y) \\
      &\phantom{=} + x^2 y^2 \left(B(x,y) - \overline B(x,y) - 1\right),
      \\
      \overline B(x, y) & = \frac{x}{1-x} + \frac{xy}{1-x}\left( B(x,y) - \overline B(x,y) - 1 \right),
\end{cases}
\]
    where $D(x,y)$ (resp. $B(x,y)$, resp. $\overline B(x,y)$) is the
    generating function for the set of Dyck paths (resp. dispersed
    Dyck paths, resp. dispersed Dyck paths starting with $F$) with
    respect to the statistics $\m{FU} + \m {UU} + 2\,\m {DU} + \m {DF}
    + \m {DD}$.  After solving this system, we obtain the result by
    evaluating $K_{123} (x,y) = 1 + x + (B(xy,\frac{1}{y}) - 1 - xy) /
    y^2 $.
    Note that it is possible to look directly at $g(\m{123})
    = \m{DFU} + \m{DFF} + \m{FFU} + \m{FFF}$ rather than at $g(\m{132}
    + \m{213})$ as we did, but the decomposition will be more
    complicated.
    
    Note that $K_{132}(x,y) = K_{213}(x,y)$, by taking the
    reverse-complement of faro permutations.  We remark that $\m{DF}+\m{DD}+\m{DU}$ corresponds to the number of occurrences of $D$ except the last symbol if it is a $D$. So, if $\mathcal{D}$ is the set of Dyck paths and $\mathcal{B}$  the set of dispersed paths, then the classical decompositions $\mathcal{D}=\epsilon\uplus\mathcal{D}U\mathcal{D}D$, $\mathcal{B}=\epsilon\uplus\mathcal{B}F\uplus\mathcal{B}U\mathcal{D}D$ provide the following system of functional equations:
    \[
    \begin{cases}
       D(x,y)& = 1+x^2yD(x,y)^2,\\
       B(x,y)& = 1+xB(x,y)+x^2yB(x,y)D(x,y),\\
       \overline B(x,y)&=1+xB(x,y)+x^2B(x,y)D(x,y),
    \end{cases}
    \]
%    
%\[
%    \begin{cases}
%      D(x,y) & = 1 + x^2 + 2 x^2 y \left(D(x,y) - 1 \right) + x^2 y^2 \left( D(x,y) - 1 \right)^2, \\
%      B(x,y) & = 1 + \overline B(x,y)
%      + x^2y\left( D(x,y) - 1\right)
%      + x^2y\left( D(x,y) - 1\right) \overline B(x,y) + \\
%      & \quad + x^2y^2\left( D(x,y) - 1\right) \left(B(x,y) - \overline B(x,y) - %1\right)
%      + x^2
%      + x^2 \overline B(x,y) \\
%      & \quad + x^2 y \left(B(x,y) - \overline B(x,y) - 1\right),
%      \\
%      \overline B(x, y) & = \frac{x}{1-x} + \frac{xy}{1-x}\left( B(x,y) - \overline %B(x,y) - 1 \right),
%    \end{cases}
%\]
    where $D(x,y)$ and $B(x,y)$ (resp. $\overline B(x,y)$) are the generating functions for the sets of Dyck paths and dispersed Dyck paths with respect to the number of $D$ (resp. number of $D$ except the last symbol if it is a $D$).
    Solving the system, we obtain $K_{132}(x,y) = K_{213}(x,y) = \overline B(x,y)$.
\end{proof}

\begin{cor}
    \label{faropermpop}
        For $n\geq 0$, the popularity of pattern $p\in\{21,12,132,213,123\}$ in
        $\p_{n}$ is given by the generating
        function $L_p(x)$:
        \begin{align*}
         L_{21}(x)  & = \frac{1 - \sqrt{1-4x^2}}{ 2(1-2x) \sqrt{1 - 4 x^{2}} },\\
% was     L_{21}(x)  & = \frac{4 x^{2}}{\left(1- 2 x + \sqrt{1 - 4 x^{2}} \right)^{2}\sqrt{1 - 4 x^{2}} },\\ 
%         others simplified similarly
          L_{12}(x)  & = {\frac {2x \left( -1+4\,{x}^{2}+x+\sqrt {1-4\,{x}^{2}}\right) }{(1-2x)(1 + \sqrt{1 - 4 x^{2}}) \sqrt {1-4\,{x}^{2}} }},\\
          L_{132}(x) & = {\frac {x \left(-1 +4{x}^{2}+ 2x + (1-2x)\sqrt {1-4{x}^{2}} \right) }{ (1-2x)(1 + \sqrt{1 - 4 x^{2}}) \sqrt {1-4{x}^{2}}}},\\
          L_{213}(x) & = L_{132}(x),\\
          L_{123}(x) & = \frac{x (1 + 2 x)(1-\sqrt{1 - 4 x^{2}})}{  (1-2 x)(1+\sqrt{1 - 4 x^{2}}) }.
        \end{align*}
\end{cor}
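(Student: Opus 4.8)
The plan is to read off the popularity from the distribution generating functions already established in Theorem~\ref{faropermgenfunc}. For a pattern $p$, write $K_p(x,y)=\sum_{n\ge 0}\sum_{k\ge 0}a^{(p)}_{n,k}\,x^ny^k$, where $a^{(p)}_{n,k}$ is the number of length-$n$ faro permutations with exactly $k$ occurrences of $p$. Then, by the definition of popularity,
\[
L_p(x)=\sum_{n\ge 0}\m p(\p_n)\,x^n=\sum_{n\ge 0}\Big(\sum_{k\ge 0}k\,a^{(p)}_{n,k}\Big)x^n=\left.\frac{\partial K_p}{\partial y}(x,y)\right|_{y=1},
\]
a formal power series in $x$ since $\sum_k k\,a^{(p)}_{n,k}$ is finite for each $n$. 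So it suffices to differentiate each of the five closed forms $K_{21},K_{12},K_{132},K_{213},K_{123}$ of Theorem~\ref{faropermgenfunc} with respect to $y$ and evaluate at $y=1$. Moreover, since Theorem~\ref{faropermgenfunc} already gives $K_{132}=K_{213}$ (via the reverse-complement involution on $\p_n$), the equality $L_{132}=L_{213}$ is immediate, so only four genuine computations remain.

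First I would record the elementary facts used throughout: at $y=1$ the radical $\sqrt{1-4x^2y}$ occurring in $K_{21},K_{12},K_{132}$ specializes to $A:=\sqrt{1-4x^2}$ (the same $A$ that appears in every $L_p$), with $\partial_y\sqrt{1-4x^2y}\big|_{y=1}=-2x^2/A$; and $K_{123}$ involves only the radical $\sqrt{1-4x^2}$, which is constant in $y$, making that case even simpler. Then, for each $p$, I would apply the quotient rule to the displayed expression for $K_p$, substitute $y=1$ together with the derivative above, place everything over the common denominator — $(1-2x)A$ for $K_{21}$, $(1-2x)(1+A)^2A$ for $K_{12},K_{132},K_{213}$, and $(1-2x)(1+A)$ for $K_{123}$ — and simplify the numerator. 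A useful shortcut at the end is to write the numerator in the form $P(x)+Q(x)\,A$ and reduce modulo $A^2=1-4x^2$, so that matching the outcome against the stated $L_p$ becomes a comparison of two polynomials in $x$. As an independent sanity check I would verify the first few Taylor coefficients against direct enumeration of $\p_n$ — for instance $\p_3=\{123,132,213\}$ gives $\m{21}(\p_3)=1$, $\m{12}(\p_3)=2$, and $\m{123}(\p_3)=\m{132}(\p_3)=\m{213}(\p_3)=1$, with analogous counts for $n=4,5$ — which pins down any sign or normalization slip.

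An alternative that avoids reusing the closed forms for $K_p$ is to differentiate, with respect to $y$ at $y=1$, the systems of functional equations for $D$, $B$, $\overline B$ set up in the proof of Theorem~\ref{faropermgenfunc}: this converts each system into a \emph{linear} system for the popularity series of the corresponding path statistic on $\B_n$, which is the equivalent datum by the bijection $g$ of Theorem~\ref{permbij}, and solving it yields the same $L_p$. Either route is conceptually routine; the only real work is the algebraic bookkeeping, and the single point requiring care is the branch of the square root — all series must be expanded with $A(0)=1$ — so that the simplified expressions coincide with the displayed $L_p$ and not with a spurious conjugate.
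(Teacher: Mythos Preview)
Your approach is correct and matches the paper's own method: the paper gives no explicit proof for Corollary~\ref{faropermpop}, but the analogous Corollary~\ref{faropop} is proved precisely by computing $\partial_z F_p|_{z=1}$, so the intended argument here is exactly your $\partial_y K_p|_{y=1}$. Your additional remarks on organizing the algebra and the sanity checks are fine elaborations but not required.
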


\begin{proof} We evaluate  $\frac{\partial K_p(x,y)}{\partial y } \big\vert_{y=1}$.
\end{proof}

\begin{cor}
    \label{faropermpopclose} 
    For $n\geq 2$, the popularity  of pattern $p\in\{21,12,132,213,123\}$ in
        $\p_{n}$ is given by $\m p(\p_n)$:
         \begin{align*}
         \m{21}(\p_n)&=\frac{n+1}{2} \binom{n}{\lfloor\frac{n}{2}\rfloor }
-{2}^{n-1}\sim ~ \sqrt{\frac{n}{2\pi}}\cdot 2^{n},\\
         %\left\{\begin{array}{ll} {2}^{n-1} \cdot\left( {\frac{n+1}{2} \choose 
         %\frac{1}{2}}-1 \right) & \mbox{ if } n \mbox{ is even,}\\
         %{2}^{n-1}\cdot \left( {\frac{n}{2} \choose 
         %\frac{1}{2}}-1 \right) & \mbox{ otherwise, }\\
          %             \end{array}\right.
         \m{12}(\p_n)&=(n-1)\binom{n}{\lfloor\frac{n}{2}\rfloor}-\m{21}(\p_n)\sim ~ \sqrt{\frac{n}{2\pi}}\cdot 2^{n},\\
         \m{123}(\p_n)&={2}^{n}-2\,\binom{n-1}{\lfloor\frac{n-1}{2} \rfloor }-\binom{n}{\lfloor\frac{n}{2} \rfloor }\sim ~2^n,
\\
         \m{132}(\p_n)&=\m{213}(\p_n)=\frac{1}{2}\left((n-2)\binom{n}{\lfloor\frac{n}{2}\rfloor}-\m{123}(\p_n)\right)\sim ~\sqrt{\frac{n}{2\pi}}\cdot 2^{n}.
         \end{align*}
\end{cor}
\begin{proof} Recall that 
$|\p_n|={n\choose\lfloor n/2\rfloor}$. Then  we have
$\m{21}(\p_n)+\m{12}(\p_n)=
(n-1)\cdot{n\choose\lfloor n/2 \rfloor}$ and 
$\m{132}(\p_n)+\m{213}(\p_n)+\m{123}(\p_n)= 
(n-2)\cdot{n\choose\lfloor n/2 \rfloor}$, and 
considering  $\m{132}(\p_n)=\m{213}(\p_n)$, it suffices
to prove the result for $\m{21}(\p_n)$ and
$\m{123}(\p_n)$. Due to Corollary~\ref{faropermpop}, we
have $$L_{21}(x)   = \frac{W(x)}{2x} -\frac{1}{2(1-2x)}$$
where $W(x)=\frac{x}{ (1-2x) \sqrt{1 - 4 
x^{2}} }$ is the generating function for the sequence 
\oeis{A100071} in \cite{Slo} which has the general term
$\frac{n}{2}\cdot {n-1\choose 
\lfloor (n-1)/2 \rfloor }$. This induces directly 
$\m{21}(\p_n)=\frac{n+1}{2}\cdot {n\choose 
\lfloor n/2 \rfloor }-{2}^{n-1}$.

Similarly, if we expand the numerator of $L_{123}(x)$ given in 
Corollary~\ref{faropermpop}, then we obtain four generating 
functions having the general terms respectively equal to 
$2^{n-1}-{n-1\choose \lfloor (n-1)/2 \rfloor }$,
$-{n-1\choose \lfloor (n-1)/2 \rfloor }$,
$2^{n-1}-\frac{1}{2}\cdot {n\choose \lfloor n/2 \rfloor }$ 
and $-\frac{1}{2}\cdot {n\choose \lfloor n/2 \rfloor },$ 
which implies the claimed result. Finally, asymptotics are easily
obtained using ${n\choose \lfloor n/2 \rfloor }\sim
\sqrt{\frac{2}{\pi n}}\cdot 2^n$. 
\end{proof}

%The total number of faro permutations of length $n$
%is ${n\choose \lfloor n/2 \rfloor }\sim
%\sqrt{\frac{2}{\pi n}}\cdot 2^n$. JLB: on l'a déjà mis en fin de preuve du Cor 3.7

Using formulae from Corollary~\ref{faropermpopclose} the following remark can be easily verified.

\begin{rem}
The expected number of the occurrences of the pattern
$21$ (respectively $12$, $132$ and $213$) in a 
randomly selected faro permutation of length $n$ is 
asymptotically equivalent to $n/2$ when $n \to 
\infty$. In contrast, the expectation of $123$ is 
asymptotically equivalent to $\sqrt{ \pi n / 2 }$ and 
thus the probability 
that a random faro permutation contains an occurrence
of $123$ at a random position approaches $0$ as $n$ grows.
\end{rem}

Table~\ref{tablepop} provides the first values of the popularity of each pattern of length at most three in faro permutations.

 \begin{table}[ht!]
    \centering
    $ \begin{array}{c|l|l}
      \mbox{Pattern $p$} & \mbox{Popularity of $p$ in $\mathcal{P}_n$ for $1\leq n\leq 11$}  & \mbox{OEIS} \\\hline\hline
      21  &     0,1,2,7,14,38,76,187,374,874,1748, \dots &\oeis{A107373}\\
      \hline
      12  & 0,1,4,11,26,62,134,303,634,1394,2872, \dots      &  \oeis{A340567}\\
      \hline
      132, 213  &  0,0,1,4,10,28,61,152,318,748,1538, \dots  & \oeis{A340568}\\
      \hline
      123  &  0,0,1,4,10,24,53,116,246,520,1082, \dots   &  \oeis{A340569}\\
      \hline
      231,312,321  &  0,0,0,0, \dots   & \\
      \hline
    \end{array}$
    \caption{
      Popularity of patterns $p$ of length at most three  in faro permutations.}
    \label{tablepop}
 \end{table}

\section{Some particular subsets of $\mathcal{P}_n$ and $\mathcal{S}_{n,k}$}

In this part, we study particular subsets of faro permutations and
faro words which are in one-to-one correspondence with other sets of
well-known combinatorial objects.  Let us recall the definition of a
{\em standard cycle notation} (s.c.n.) and Foata's first fundamental
transformation $\phi$ (see~\cite{lot}).

In the standard cycle notation (s.c.n.) of a permutation $w$ each
cycle starts with its largest element, and cycles are ordered from
left to right in increasing order of their largest elements.

Foata's first fundamental transformation (see~\cite{lot}) acts on a
permutation $w$ as follows.  Write a permutation $w$ in s.c.n., and
then cyclically rearrange every cycle so that it ends with its largest
element.  Then, reverse each cycle and delete all parentheses. For
instance, if $w=7321564$, then the s.c.n. for $w$ is
$(32)(5)(6)(741)$, after rearrangement we have $(23)(5)(6)(417)$, and
thus $\phi(w)=3256714$. If $w$ is an involution, it contains only
cycles of length one or two, rearrangement and reversion are not
needed, we directly obtain the image after dropping parentheses in
s.c.n.

\begin{thm}
  \label{faroperm2inv}
  Foata's first fundamental transformation bijectively maps the set
  $\I_n(3\mbox{-}2\mbox{-}1)$ of involutions of length $n$ avoiding
  the classical pattern $3\mbox{-}2\mbox{-}1$ onto the set $\p_n$ of
  faro permutations.
\end{thm}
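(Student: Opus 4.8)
The plan is to make Foata's first fundamental transformation $\phi\colon S_n\to S_n$ explicit and then control how it interacts with the interlacing condition defining faro permutations. Recall that $\phi(\sigma)$ is obtained by writing $\sigma$ in cycle notation with every cycle led by its largest element and the cycles arranged in increasing order of their largest elements, and then deleting the parentheses; this is the classical bijection on $S_n$, and its inverse sends a word $w$ to the permutation whose cycles are read off after inserting a left parenthesis immediately before each left-to-right maximum of $w$. In particular the left-to-right maxima of $\phi(\sigma)$ are exactly the cycle maxima of $\sigma$, and they occur at the cycle-initial positions.

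First I would show that $\phi$ maps $\I(321)$ into $\p_n$. Fix an involution $\sigma$ avoiding $321$. Its cycles are fixed points and transpositions, so in the canonical form above the blocks are of two kinds: a singleton $(M_t)$ coming from a fixed point $M_t$, and a pair $(M_t,\mu_t)$ with $\mu_t<M_t$ coming from a transposition $\{\mu_t,M_t\}$; writing the blocks as $B_1B_2\cdots B_m$ in canonical order gives $M_1<M_2<\cdots<M_m$, and $\phi(\sigma)$ is their concatenation. To prove that $w:=\phi(\sigma)$ is a faro permutation it suffices to check $w_i<w_{i+2}$ for every $i$, which I would do by listing how three consecutive entries of $w$ can be distributed among consecutive blocks. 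The possibilities are: two entries in $B_s$ and one in $B_{s+1}$, forcing $M_s<M_{s+1}$; one entry in each of $B_s,B_{s+1},B_{s+2}$, forcing $w_i\le M_s<M_{s+2}=w_{i+2}$; and one entry in $B_s$ together with two in $B_{s+1}=(M_{s+1},\mu_{s+1})$, forcing $M_s<\mu_{s+1}$ if $B_s$ is a singleton and $\mu_s<\mu_{s+1}$ if $B_s$ is a pair. The first two inequalities are automatic from $M_1<\cdots<M_m$. For the third I argue by contradiction: if $B_s=(M_s)$ and $M_s>\mu_{s+1}$, then $\mu_{s+1}<M_s<M_{s+1}$, and at the positions $\mu_{s+1}<M_s<M_{s+1}$ the permutation $\sigma$ takes the strictly decreasing values $M_{s+1}>M_s>\mu_{s+1}$, a $321$; if $B_s=(M_s,\mu_s)$ and $\mu_s>\mu_{s+1}$, then $\mu_{s+1}<\mu_s<M_s<M_{s+1}$, and at the positions $\mu_{s+1}<\mu_s<M_s$ the values are $M_{s+1}>M_s>\mu_s$, again a $321$. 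Both contradict $\sigma\in\I(321)$, so $w\in\p_n$.

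To finish I would either count or close the loop. Counting: $\phi$ is injective, it maps $\I(321)$ into $\p_n$, and it is classical (e.g. via the Robinson--Schensted correspondence, under which $321$-avoiding involutions of $[n]$ biject with standard Young tableaux of at most two rows) that $|\I(321)|=\binom{n}{\lfloor n/2\rfloor}=|\p_n|$, so the restriction of $\phi$ to $\I(321)$ is a bijection onto $\p_n$. Alternatively one shows $\phi^{-1}(\p_n)\subseteq\I(321)$ directly: if $w$ is a faro permutation, the blocks cut just before each left-to-right maximum have length at most two (a block of length $\ge3$ would contain positions $j,j+1,j+2$ with $w_{j+1},w_{j+2}$ not left-to-right maxima, hence, $w_j$ being the running maximum through position $j+1$, $w_j>w_{j+2}$, contradicting $w_j<w_{j+2}$), so $\phi^{-1}(w)$ is an involution whose transpositions are the value pairs at the descents of $w$ and whose fixed points are the remaining (left-to-right maximal) values; one then checks, using that $w$ avoids classical $321$ and increases along each parity class, that no two of these transposition intervals nest and no fixed point lies strictly between the two elements of a transposition, which — being the only ways an involution can contain $321$ — shows $\phi^{-1}(w)\in\I(321)$. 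The main obstacle in either route is exactly the bookkeeping of the middle paragraph: isolating which interlacing inequalities for $\phi(\sigma)$ are nontrivial and recognizing that their failure is precisely a nested pair of transpositions or a fixed point trapped inside a transposition, i.e. precisely an occurrence of $321$ in $\sigma$; once that is organized, the remaining steps are routine.
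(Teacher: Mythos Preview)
Your proof is correct and follows essentially the same route as the paper: verify the faro condition $w_i<w_{i+2}$ for $\phi(\sigma)$ by a case analysis on how three consecutive entries fall across the singleton/pair blocks, use $321$-avoidance of $\sigma$ to rule out the two nontrivial configurations (singleton--pair and pair--pair), and conclude by injectivity of $\phi$ together with $|\I(321)|=\binom{n}{\lfloor n/2\rfloor}=|\p_n|$. Your case organization is in fact a bit tidier than the paper's (which lists the vacuous cases $(xy)(z)$ and $(x)(y)(z)$ alongside the substantive ones), and your optional direct argument for $\phi^{-1}(\p_n)\subseteq\I(321)$ is a genuine addition, but the core strategy is identical.
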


\begin{proof} Let us prove that the standard cycle notation
of $w\in\I_n(3\mbox{-}2\mbox{-}1)$ cannot contain any of the following consecutive
cycles: $(x)(yz)$ with $z<x$, $(xy)(z)$ with $z<x$, $(x)(y)(z)$ with 
$z<x$, or $(xy)(zt)$ with $t<y$ or $z<x$.  Assume that $w\in\I_n(3\mbox{-}2\mbox{-}1)$ and assume
towards contradiction that the standard cycle notation (s.c.n.) of $w$
contains $(x)(yz)$ with $z<x$.
Then we have $x<y$ and
thus $z<x<y$, which means that the subsequence $yxz$ (occurring at
indices $z,x,y$) is an occurrence of $3\mbox{-}2\mbox{-}1$ in $w$, a contradiction.
Due to the definition of the s.c.n. of $w$, the case
$(x)(y)(z)$ with $z<x$, the case $(xy)(z)$ with $z<x$ and the case $(xy)(zt)$ with $z<x$ do not occur
since the cycles are arranged in increasing order of their first
elements.
If the s.c.n. of $w$ contains $(xy)(zt)$ with $t<y$, then we
have $t<y<x<z$, which implies that $w$ contains an occurrence $zxy$
(at indices $t,y,x$) of $3\mbox{-}2\mbox{-}1$, a contradiction.  Thus,
$\phi(\I_n(3\mbox{-}2\mbox{-}1))\subset \mathcal{P}_n$. Since $\phi$ is injective, and
$\I_n(3\mbox{-}2\mbox{-}1)$ is also enumerated by ballot numbers $b_n$ (see for instance
\cite{bar,sim}), we have $\phi(\I_n(3\mbox{-}2\mbox{-}1))=\mathcal{P}_n$.
\end{proof}

\begin{rem} It is known that Foata's first transformation $\phi$ maps the statistic of the number of excedances (values $w_i$ such that $w_i>i$) to the statistic $\m{21}$ (number of descents $w_i>w_{i+1}$). Therefore, the generating functions $K_{21}(x,y)$ and $L_{21}(x)$ in Corollary~\ref{faropermpop} also give the distribution
and the popularity of excedances in $\I_n(3\mbox{-}2\mbox{-}1)$.
\end{rem}
\begin{rem} We could easily check that $g(\phi(w))=\Phi(w)$ for $w\in \I_n(3\mbox{-}2\mbox{-}1)$, where $\Phi$ is a bijection  in~\cite{bar} between involutions and labeled Motzkin paths, which also is a restriction of Biane's bijection~\cite{biane}, which in turn is closely related to
Françon-Viennot bijection~\cite{fra}.
\end{rem}

\bigskip
The next theorem deals with alternating faro permutations, i.e. permutations $w$ satisfying  $w_1 > w_2 <w_3 > \cdots $. Let $\mathcal{A}_n$ be the set of alternating faro permutations of length $n$.

\begin{thm}
  \label{prm-dyck} There is a bijection between $\mathcal{A}_{2n}$ and the set of Dyck paths of length $2n$.
   % $\bullet$ $w\in\mathcal{A}_{2n}$ if and only if $w$ is a length $2n$ faro %derangements, i.e. faro permutations with no fixed point $w_i=i$ for %$i\in[1,n]$.
\end{thm}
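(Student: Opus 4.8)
The plan is to produce an explicit bijection, after first determining exactly which faro permutations of length $2n$ are alternating. First I would observe that if $w=w_1w_2\cdots w_{2n}$ is a faro permutation then the defining inequalities $w_i\le w_{i+2}$ together with injectivity force $w_1<w_3<\cdots<w_{2n-1}$ and $w_2<w_4<\cdots<w_{2n}$; in particular, the values occupying the odd positions, listed in increasing order, are $w_1,w_3,\dots,w_{2n-1}$, and similarly for the even positions. Hence $w$ is alternating (in the down--up sense of the statement, $w_1>w_2<w_3>\cdots<w_{2n-1}>w_{2n}$) \emph{if and only if} $w_{2j-1}>w_{2j}$ for every $j\in[1,n]$: the required strict descents are exactly those inequalities, while the required ascents at the even positions come for free from $w_{2j}<w_{2j+2}<w_{2j+1}$. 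Equivalently, by Remark~\ref{uni}, the alternating faro permutations of length $2n$ are precisely the faro permutations of length $2n$ whose block decomposition consists of $n$ pairs and no singleton.

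Next I would define, for an alternating faro permutation $w$ of length $2n$, a lattice word $\psi(w)\in\{U,D\}^{2n}$ whose $v$-th letter (for $v=1,\dots,2n$) is $U$ if the value $v$ occurs at an even position of $w$ and $D$ if it occurs at an odd position of $w$. Writing $c_1<\cdots<c_n$ for the odd-position values and $d_1<\cdots<d_n$ for the even-position values, so that $w=c_1d_1c_2d_2\cdots c_nd_n$, the characterization of the first step reads $d_j<c_j$ for all $j$. I would then verify that $\psi(w)$ is a Dyck path of semilength $n$: it has $n$ up steps and $n$ down steps, so it returns to the $x$-axis, and its height after $m$ steps equals $\#\{j:d_j\le m\}-\#\{j:c_j\le m\}$. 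Since $\{c_j\}$ and $\{d_j\}$ partition $[1,2n]$ and are increasing, a new minimum of this height can only be reached right after a down step, where, at the down step in position $c_t$, it equals $\#\{j:d_j<c_t\}-t$; and this is $\ge 0$ for every $t$ exactly because $d_t<c_t$. Hence $\psi(w)\in\D_n$.

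Finally I would exhibit the inverse and conclude. Given $P\in\D_n$, let $d_1<\cdots<d_n$ be the positions of its up steps and $c_1<\cdots<c_n$ the positions of its down steps among $\{1,\dots,2n\}$; the Dyck condition yields $d_t<c_t$ for all $t$ by the same height computation, so $w:=c_1d_1c_2d_2\cdots c_nd_n$ satisfies $w_{2j-1}<w_{2j+1}$, $w_{2j}<w_{2j+2}$ and $w_{2j-1}>w_{2j}$, hence is an alternating faro permutation by the first step, and clearly $\psi(w)=P$; the reverse composition is equally immediate, so $\psi$ is a bijection from alternating faro permutations of length $2n$ onto $\D_n$. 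The only point requiring a little care is the second paragraph --- matching the order-theoretic inequalities $d_j<c_j$ with nonnegativity of the lattice path --- while everything else is bookkeeping. One could alternatively phrase the whole argument as a restriction of the bijection $g$ of Section~\ref{sec:pat_faro_perm}: since the alternating faro permutations of length $2n$ are exactly the faro permutations with no singleton, their $f$-images are the paths of $\B_{3n-2,n-1}$ with no level step, and deleting the forced peaks via $g$ lands precisely on $\D_n$; but the self-contained description via $\psi$ seems preferable.
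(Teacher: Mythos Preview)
Your proof is correct and self-contained. The paper takes the shorter route you sketch in your final sentence: alternating faro permutations are exactly those whose block decomposition has no singleton, so $f(w)$ has no level step, and hence $g(w)$ (obtained by deleting the $2n-1$ forced peaks from $f(w)\in\B_{6n-2,\,2n-1}$) is a Dyck path of length $2n$. Your main argument via $\psi$ is instead the classical correspondence between Dyck paths of semilength $n$ and pairs of increasing sequences $d_1<\cdots<d_n$, $c_1<\cdots<c_n$ partitioning $[1,2n]$ with $d_j<c_j$ for all $j$ (equivalently, standard Young tableaux of shape $2\times n$). This buys independence from the machinery of Sections~\ref{sec:pat_faro_words}--\ref{sec:pat_faro_perm}, at the cost of redoing a well-known lattice-path nonnegativity check; the paper's two-line proof buys brevity by cashing in the earlier work on $f$ and $g$. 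One minor slip in your closing remark: since the permutations have length $2n$, their $f$-images lie in $\B_{6n-2,\,2n-1}$, not $\B_{3n-2,\,n-1}$.
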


\begin{proof} Let $w$ be a faro permutation of length $2n$.
    Then, $w$ is  alternating if and only if $w$ does not contain any singleton in its block decomposition. Due to the definition of $f$, this means that $f(w)$ does not contain any $F$-steps and thus, $g(w)$ is a Dyck path of length $2n$, and vice versa.   
\end{proof}

\begin{thm}
  \label{prm-deran}  The set $\mathcal{A}_{2n}$ is exactly the set of length $2n$ faro derangements, i.e. faro permutations with no fixed point $w_i=i$ for $i\in[1,2n]$.
\end{thm}

\begin{proof} Let $w=w_1w_2\ldots w_{2n-1}w_{2n}$ be a faro permutation of length $2n$, that is $w_i<w_{i+2}$ for $1\leq i\leq 2n-2$. Then, $w$ is  alternating if and only if $w$ does not contain any singleton in its block decomposition, or equivalently,  $w$ satisfies $w_i>w_{i+1}$ if $i$ is odd, and  $w_i<w_{i+1}$ otherwise. This is equivalent to $w_i$ is greater than $w_1,w_2,\ldots, w_{i-1}$ and $w_{i+1}$ if $i$ is odd, and $w_i$ is smaller than $w_{i-1}$, $w_{i+1}, w_{i+2}, \ldots, w_{2n}$ if $i$ is even, which means that $w_i>i$ if $i$ is odd and $w_i<i$ otherwise. Thus, we have $w_i\neq i$, and $w$ is a derangement. This last implication also is an equivalence because  it cannot occur $w_i<i$ with $i$ odd, or $w_i>i$ with $i$ even in a faro derangement $w$. 
\end{proof}
\begin{thm}
  \label{prm-fibo1} 
  Let $\mathcal{B}_n$ (resp. $\mathcal{B}'_n$) be the set of length $n$ faro permutations avoiding the classical pattern $2\mbox{-}3\mbox{-}1$  (resp. the pattern $3\mbox{-}1\mbox{-}2$), then
  
 $\bullet$ The cardinality of $\mathcal{B}_n$ is given by the Fibonacci sequence
  $f_n$ defined by $f_n=f_{n-1}+f_{n-2}$ with $f_1=1$, $f_2=2$.
  
 $\bullet$ We have $\mathcal{B}_n=\mathcal{B}'_n$.
 
 $\bullet$ $\mathcal{B}_n$ is exactly the set of length $n$ faro involutions.
\end{thm}

\begin{proof}
A faro permutation $w$ avoiding the pattern $2\mbox{-}3\mbox{-}1$ is of the form $1w'$ or $21w'$, where $w'$ also is a faro permutation avoiding $2\mbox{-}3\mbox{-}1$. Indeed, if a faro permutation $w$ starts with $x>2$, then $w$ starts with $x1y$ for some $y>x$. Then the value 2 is to the right of $x1y$, which creates an occurrence $xy2$ of $2\mbox{-}3\mbox{-}1$, a contradiction. Therefore, the
cardinality $f_n$ of length $n$ faro permutations satisfies
$f_n=f_{n-1}+f_{n-2}$ with $f_1=1$, $f_2=2$. Using the same argument, faro permutations avoiding $2\mbox{-}3\mbox{-}1$ are  also faro permutations avoiding $3\mbox{-}1\mbox{-}2$.

For the third statement, due to the decomposition of $w\in\mathcal{B}_n$ (either $w=1w'$ or $w=21w'$ with $w'\in \mathcal{B}$), we conclude by induction that $w$ is necessarily a faro involution. Conversely, a faro involution $w$ avoids the classical pattern $321$, which implies that $w=1w'$ or $w=21w'$ where $w'$ is also a faro involution (if the first entry $w_1$ of $w$ satisfies $w_1\geq 3$, then we have $w_{w_1}=1$; since $w$ avoids $321$, we necessarily have $w_i> w_1\geq 3$ for $1\leq i\leq w_1-1$, and thus $w_{w_1-2}>w_{w_1}=1$, which is not possible in a faro permutation). A simple induction implies that $w\in\mathcal{B}_n$, which completes the proof.
\end{proof}

In the following, we consider (for convenience) faro words on the $n$-ary alphabet $[0,n-1]$, and we focus on the set of \emph{subexcedent} faro words of length $n$, i.e. faro words $w_1w_2\ldots w_n$ satisfying $w_i\leq i-1$ for $1\leq i\leq n$.
We make a shift $[1,n]\rightarrow [0,n-1]$ on the alphabet in order to apply directly the  results presented in \cite{Bur}.

\begin{thm}
  \label{prm-dum}
 There is a bijection between  subexcedent  faro words of length $n$ and $2\mbox{-}1\mbox{-}4\mbox{-}3$-avoiding  Dumont permutations of the second kind
  of length $2n$.
\end{thm}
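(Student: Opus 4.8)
The plan is to set up a chain of bijections
$$
\{\text{subexcedent faro words of length }n\}
\;\longleftrightarrow\;
\{\text{ternary trees with }n\text{ internal nodes}\}
\;\longleftrightarrow\;
\{2143\text{-avoiding Dumont permutations of the second kind of length }2n\},
$$
using the intermediate object (ternary trees, counted by $\frac{1}{2n+1}\binom{3n}{n}$) that the surrounding text already announces as the relevant combinatorial family. First I would characterise the subexcedent faro words concretely: writing $w=w_1w_2\ldots w_n$ with $w_i\le i-1$, the faro condition $w_i\le w_{i+2}$ together with subexcedence forces strong structural constraints on the block decomposition of $w$ into pairs and singletons (each pair $w_iw_{i+1}=ba$ with $b>a$ must have $a\le i-1$ and $b\le i$, hence $b=i$ and $a\le i-1$; and the value-increase constraints along even and odd subsequences become a pair of interleaved multichains). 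I expect that, after translating through the multichain description in Remark~\ref{uni}, a subexcedent faro word of length $n$ is exactly encoded by a triple of weakly increasing sequences with ranges bounded by the positions — which is the standard "ternary generating tree / left-right-middle child" encoding of ternary trees. So the first half of the argument is a (somewhat computational) repackaging of the definition of $f$ restricted to subexcedent words.

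Next I would recall the known combinatorial fact that $2143$-avoiding Dumont permutations of the second kind of length $2n$ are counted by the ternary-tree numbers $\frac{1}{2n+1}\binom{3n}{n}$ — this is due to Burstein~\cite{Bur}, and the reference is already cited in the excerpt — and, more importantly, use the explicit bijection from that work between such Dumont permutations and ternary trees. Composing it with the encoding built in the previous paragraph yields the desired bijection. To make the statement self-contained I would either (a) reproduce the ternary-tree $\leftrightarrow$ Dumont bijection of~\cite{Bur} and check that it composes cleanly with our encoding, or (b) give a direct map: read the subexcedent faro word left to right, and at step $i$ use the block containing position $i$ (singleton, or the pair closed at $i$, and which "branch" of the faro lattice it sits on) to decide the next insertion into the Dumont permutation, building a $2143$-avoiding word of length $2n$ whose descent/cycle structure is forced to be of Dumont type of the second kind.

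The main obstacle I anticipate is verifying the $2143$-avoidance together with the Dumont-of-the-second-kind property simultaneously under whichever explicit map is chosen: the Dumont condition of the second kind ("every even value is a descent top and every odd value is not", in the usual normalization) is a rigid local constraint, and one must check that the image of a subexcedent faro word never violates it, nor creates a $2143$ pattern. I would handle this by induction on $n$, peeling off the last block of $w$ (the singleton $w_n$, or the pair $w_{n-1}w_n=i\,a$), showing that the corresponding local modification of the Dumont permutation — inserting the new largest pair of values $2n-1,2n$ in the position dictated by the faro lattice coordinate of that block — preserves both properties, since any new $2143$ occurrence would have to use the newly inserted symbols and the bound $a\le i-1$ rules that out. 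A cardinality check then upgrades the resulting injection to a bijection, closing the proof.
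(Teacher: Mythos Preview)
Your plan has a genuine gap at the very first step: the enumeration you rely on is wrong. Subexcedent faro words of length $n$ are \emph{not} counted by the ternary-tree number $\frac{1}{2n+1}\binom{3n}{n}$. Already for $n=2$ there are only two such words ($00$ and $01$), whereas there are three ternary trees with two internal nodes; for $n=4$ there are $21$ subexcedent faro words but $55$ ternary trees. The correct count, established in \cite{Bur} and recalled just before the proof in the paper, is the product $a_{n}a_{n+1}$, where $a_{2m}=\frac{1}{2m+1}\binom{3m}{m}$ and $a_{2m+1}=\frac{1}{m+1}\binom{3m+1}{m}$ (sequence \oeis{A047749}). So a chain of bijections through a \emph{single} ternary tree cannot exist; the phrase ``related to ternary trees'' in the introduction is looser than you read it. Your local analysis of pairs is also off: from $w_i\le i-1$ one gets $b\le i-1$ for the larger entry $b=w_i$ of a pair, and nothing forces $b=i$ (for instance $0010$ is a subexcedent faro word with the pair $10$ at positions $3,4$, where $b=1$).

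The paper's proof reflects the product structure $a_n a_{n+1}$ directly. It splits a subexcedent faro word $\pi$ into its odd-position and even-position subsequences $\pi_o$ and $\pi_e$; each is nondecreasing with a position-dependent upper bound (namely $\pi_o(i)\le 2i-2$ and $\pi_e(i)\le 2i-1$). Each of $\pi_o$, $\pi_e$ is then encoded as a north-east lattice path staying on or below a line of slope $2$ (respectively $y=2x$ and $y=2x+1$). This \emph{pair} of paths is matched with the pair of paths that \cite[Theorem~3.5]{Bur} already associates, via a Krattenthaler-type bijection, to the odd- and even-position subsequences of a $2143$-avoiding Dumont permutation of the second kind of length $2n$. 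If you want to repair your approach, the natural fix is exactly this: replace the single ternary-tree intermediate by a pair of constrained lattice paths and build the two halves of the bijection independently.
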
   

We will briefly recall the result given in \cite{Bur} that enumerated $2\mbox{-}1\mbox{-}4\mbox{-}3$-avoiding Dumont permutations of 
the second kind of length $2n$. Dumont permutations of the second kind of length $2n$ are permutations $\pi$ that satisfy the following conditions for $i\in[n]$:
\[
\pi(2i-1)\ge 2i-1, \qquad \pi(2i)\le 2i-1.
\]

In other words, the values in the odd positions are weak excedances,
whereas the values in the even positions are deficiencies. In
addition, if $\pi$ avoids the pattern $2\mbox{-}1\mbox{-}4\mbox{-}3$
(i.e. does not contain a subsequence
$\pi(i_1)\pi(i_2)\pi(i_3)\pi(i_4)$ of length $4$ such that
$i_1<i_2<i_3<i_4$ and $\pi(i_2)<\pi(i_1)<\pi(i_4)<\pi(i_3)$), then the
values in the even positions of $\pi$ are exactly $\{1,2,\dots,n\}$,
and the values in the odd positions of $\pi$ are exactly
$\{n+1,n+2,\dots,2n\}$. Moreover, the subsequence of values of $\pi$
in the even positions avoids the pattern $2\mbox{-}1\mbox{-}3$ while the subsequence
of values of $\pi$ in the odd positions avoids the pattern $1\mbox{-}3\mbox{-}2$. This
allows~\cite{Bur} to construct a bijection as in
Krattenthaler~\cite{Kra01} from the even-position subsequence of $\pi$
to north-east integer lattice paths from $(0,0)$ to $(n,\left\lfloor
n/2 \right\rfloor)$ staying on or below the line $y=x/2$, and from the
odd-position subsequence of $\pi$ to the same paths but ending at
$(n+1,\left\lfloor (n+1)/2 \right\rfloor)$. Let $\{a_n\}_{n\ge 0}$ be
the sequence \href{https://oeis.org/A047749}{A047749} \cite{Slo}, so
that

\[
a_{2n}=\frac{1}{2n+1}\binom{3n}{n}, \qquad a_{2n+1}=\frac{1}{n+1}\binom{3n+1}{n},
\]
then the number of $2\mbox{-}1\mbox{-}4\mbox{-}3$-avoiding Dumont permutations of the second kind of length $2n$ is $a_{n}a_{n+1}$. Thus, to prove Theorem \ref{prm-dum}, we only need to construct a bijection from subexcedent faro words of length $n$ to ordered pairs of north-east lattice paths on or below the line $y=x/2$ from $(0,0)$ to $(n,\left\lfloor n/2 \right\rfloor)$ and $(n+1,\left\lfloor (n+1)/2 \right\rfloor)$, respectively.

\begin{proof}[Proof of Theorem \ref{prm-dum}] 
Let $\pi$ be a subexcedent faro word of length $n$.
As in \cite{Bur}, let $\pi_o$ and $\pi_e$ be the odd-position and even-position subsequences of $\pi$. Then $\pi_o$ and $\pi_e$ are nondecreasing subsequences such that
\begin{equation} \label{eq:subexc-even-odd}
\begin{split}
\pi_o(i)&=\pi(2i-1)\in[0,2i-2], \quad i\le \left\lfloor\frac{n+1}{2}\right\rfloor,\\
\pi_e(i)&=\pi(2i)\in[0,2i-1], \quad i\le \left\lfloor\frac{n}{2}\right\rfloor.
\end{split}
\end{equation}

Conversely, any word $\pi$ whose odd-position and even-position subsequences $\pi_o$ and $\pi_e$ satisfy the above properties is a subexcedent faro word of length $n$. 
Given sequences $\pi_o$ and $\pi_e$ as in \eqref{eq:subexc-even-odd}, associate to them a pair of north-east lattice paths as follows. If $\pi_o$ or $\pi_e$ has a letter $a_i$ in position $i$, map such an entry to the point $(i-1,a_i)$ in the integer lattice. Let $k=\left\lfloor\frac{n+1}{2}\right\rfloor$ for $\pi_o$ and $k=\left\lfloor\frac{n}{2}\right\rfloor$ for $\pi_e$, and let $a_{k+1}=2k$ for $\pi_o$ and $a_{k+1}=2k+1$ for $\pi_e$.

%=====begin drawing=====

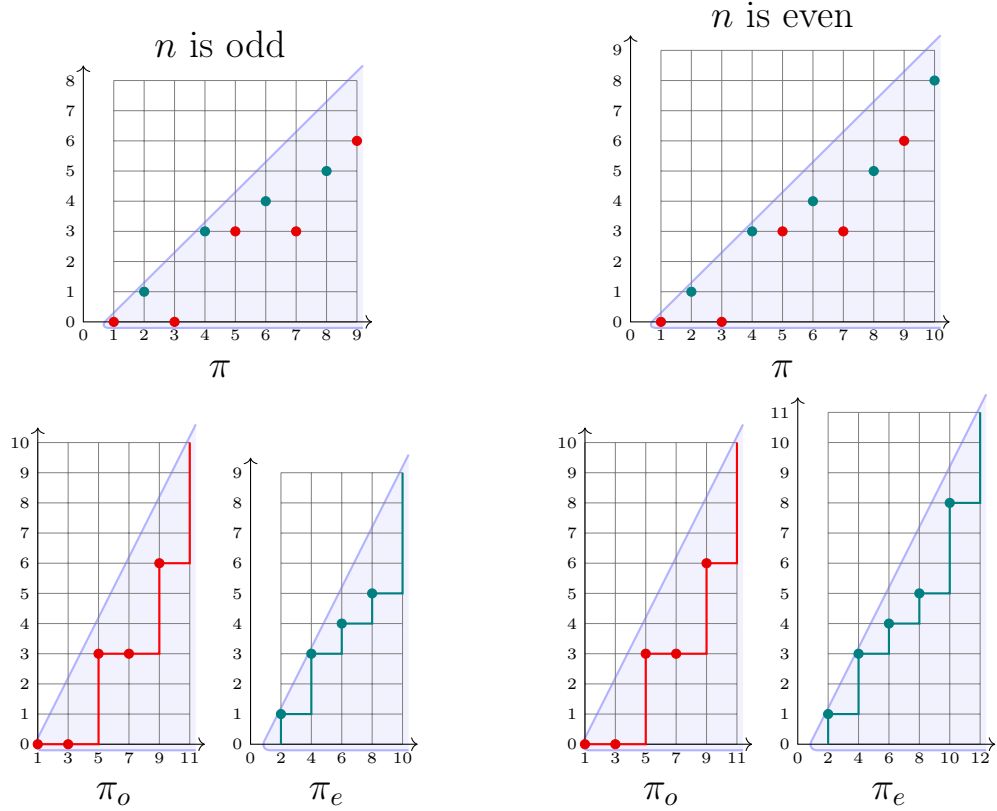
\begin{figure}[ht]
\begin{center}
{\scriptsize
\begin{tikzpicture}[scale=0.4]

\begin{scope}[xshift=1.5cm,yshift=0cm,local bounding box=odd]

\node[above] at (4.5,8.5) {\large $n$ is odd};
\node[below] at (4.5,-1) {\large $\pi$};

\draw[thick,rounded corners, blue!30,fill=blue!5] (0,0) ++(1,0)
+(8.2,8.5) -- 
+(-0.5,-0.2) -- 
+(+8.2,-0.2);

\draw[help lines] ++(1,0) +(0,0) grid +(8,8);

\draw[thin, <->] (0,8.5) |- (9.5,0);

\foreach \x in {0,1,2,3,4,5,6,7,8,9} 
{
\node[below] at (\x,0) {\tiny $\x$};
}

\foreach \x in {0,1,2,3,4,5,6,7,8} 
{
\node[left] at (0,\x) {\tiny $\x$};
}

\foreach \x/\y in {1/0,3/0,5/3,7/3,9/6}
{
\filldraw[red!90!black] (\x,\y) circle (1.5mm);
}

\foreach \x/\y in {2/1,4/3,6/4,8/5}
{
\filldraw[teal] (\x,\y) circle (1.5mm);
}

\end{scope}

\begin{scope}[xshift=0cm,yshift=-14cm,local bounding box=odd-odd]

\node[below] at (2.5,-1) {\large $\pi_o$};

\draw[thick,rounded corners, blue!30,fill=blue!5] (0,0)
+(5.2,10.6) -- 
+(-0.2,-0.2) -- 
+(+5.2,-0.2);

\draw[help lines] +(0,0) grid +(5,10);

\draw[thin, <->] (0,10.5) |- (5.5,0);

\foreach \x in {1,3,5,7,9,11} 
{
\node[below] at (\x/2-1/2,0) {\tiny $\x$};
}

\foreach \x in {0,1,2,3,4,5,6,7,8,9,10} 
{
\node[left] at (0,\x) {\tiny $\x$};
}

\draw[red,thick] (0,0)
-| (1,0)
-| (2,3)
-| (3,3)
-| (4,6)
-| (5,10);

\foreach \x/\y in {0/0,1/0,2/3,3/3,4/6}
{
\filldraw[red!90!black] (\x,\y) circle (1.5mm);
}

\end{scope}

\begin{scope}[xshift=7cm,yshift=-14cm,local bounding box=odd-even]

\node[below] at (2.5,-1) {\large $\pi_e$};

\draw[thick,rounded corners, blue!30,fill=blue!5] (0,0)
+(5.2,9.6) -- 
+(-0.2+0.5,-0.2) -- 
+(+5.2,-0.2);

\draw[help lines] ++(1,0) +(0,0) grid +(4,9);

\draw[thin, <->] (0,9.5) |- (5.5,0);

\foreach \x in {0,2,4,6,8,10} 
{
\node[below] at (\x/2,0) {\tiny $\x$};
}

\foreach \x in {0,1,2,3,4,5,6,7,8,9} 
{
\node[left] at (0,\x) {\tiny $\x$};
}

\draw[teal,thick] (1,0)
-| (1,1)
-| (2,3)
-| (3,4)
-| (4,5)
-| (5,9);

\foreach \x/\y in {1/1,2/3,3/4,4/5}
{
\filldraw[teal] (\x,\y) circle (1.5mm);
}

\end{scope}

\begin{scope}[xshift=19.5cm,yshift=0cm,local bounding box=even]

\node[above] at (5,9.5) {\large $n$ is even};
\node[below] at (5,-1) {\large $\pi$};

\draw[thick,rounded corners, blue!30,fill=blue!5] (0,0) ++(1,0)
+(9.2,9.5) -- 
+(-0.5,-0.2) -- 
+(+9.2,-0.2);

\draw[help lines] ++(1,0) +(0,0) grid +(9,9);

\draw[thin, <->] (0,9.5) |- (10.5,0);

\foreach \x in {0,1,2,3,4,5,6,7,8,9,10} 
{
\node[below] at (\x,0) {\tiny $\x$};
}

\foreach \x in {0,1,2,3,4,5,6,7,8,9} 
{
\node[left] at (0,\x) {\tiny $\x$};
}

\foreach \x/\y in {1/0,3/0,5/3,7/3,9/6}
{
\filldraw[red!90!black] (\x,\y) circle (1.5mm);
}

\foreach \x/\y in {2/1,4/3,6/4,8/5,10/8}
{
\filldraw[teal] (\x,\y) circle (1.5mm);
}

\end{scope}

\begin{scope}[xshift=18cm,yshift=-14cm,local bounding box=even-odd]

\node[below] at (2.5,-1) {\large $\pi_o$};

\draw[thick,rounded corners, blue!30,fill=blue!5] (0,0)
+(5.2,10.6) -- 
+(-0.2,-0.2) -- 
+(+5.2,-0.2);

\draw[help lines] +(0,0) grid +(5,10);

\draw[thin, <->] (0,10.5) |- (5.5,0);

\foreach \x in {1,3,5,7,9,11} 
{
\node[below] at (\x/2-1/2,0) {\tiny $\x$};
}

\foreach \x in {0,1,2,3,4,5,6,7,8,9,10} 
{
\node[left] at (0,\x) {\tiny $\x$};
}

\draw[red,thick] (0,0)
-| (1,0)
-| (2,3)
-| (3,3)
-| (4,6)
-| (5,10);

\foreach \x/\y in {0/0,1/0,2/3,3/3,4/6}
{
\filldraw[red!90!black] (\x,\y) circle (1.5mm);
}

\end{scope}

\begin{scope}[xshift=25cm,yshift=-14cm,local bounding box=even-even]

\node[below] at (3,-1) {\large $\pi_e$};

\draw[thick,rounded corners, blue!30,fill=blue!5] (0,0)
+(6.2,11.6) -- 
+(-0.2+0.5,-0.2) -- 
+(+6.2,-0.2);

\draw[help lines] ++(1,0) +(0,0) grid +(5,11);

\draw[thin, <->] (0,11.5) |- (6.5,0);

\foreach \x in {0,2,4,6,8,10,12} 
{
\node[below] at (\x/2,0) {\tiny $\x$};
}

\foreach \x in {0,1,2,3,4,5,6,7,8,9,10,11} 
{
\node[left] at (0,\x) {\tiny $\x$};
}

\draw[teal,thick] (1,0)
-| (1,1)
-| (2,3)
-| (3,4)
-| (4,5)
-| (5,8)
-| (6,11);

\foreach \x/\y in {1/1,2/3,3/4,4/5,5/8}
{
\filldraw[teal] (\x,\y) circle (1.5mm);
}

\end{scope}

\end{tikzpicture}
}
\end{center}
\caption{An example of the bijection between a subexcedent faro word and pairs of ternary paths as in the proof of Theorem~\ref{prm-dum}.}  
\label{fig:map-subexc-path}
\end{figure}

%======end drawing======

Now consider a north-east lattice path (as in Figure~\ref{fig:map-subexc-path} from $(0,0)$ to $(k,a_{k+1})$ through vertices $(0,a_1),(1,a_2),\dots,(k-1,a_k)$ in that order so that each vertex is joined to the next one by a (possibly empty) sequence of east steps followed by a (possibly empty) sequence of north steps. In other words, consider the path
\begin{equation} \label{eq:subexc-path}
N^{a_1},E,N^{a_2-a_1},E,N^{a_3-a_2},E,\dots,E,N^{a_{k+1}-a_k}
\end{equation}
from $(0,0)$ to $(k,a_{k+1})$, where $E=(1,0)$ is the unit east step and $N=(0,1)$ is the unit north step. Then this path lies on or below the line $y=2x$ for $\pi_o$ and on or below the line $y=2x+1$ for $\pi_e$, and each such path corresponds to a unique $\pi_o$ or a unique $\pi_e$.

Moreover, notice that if $n$ is even, then
\[
\begin{split}
    \left(\left\lfloor\frac{n+1}{2}\right\rfloor,\,2\left\lfloor\frac{n+1}{2}\right\rfloor\right)&=\left(\left\lfloor\frac{n}{2}\right\rfloor,\,n\right)\\
    \left(\left\lfloor\frac{n}{2}\right\rfloor,\,2\left\lfloor\frac{n}{2}\right\rfloor+1\right)&=\left(\left\lfloor\frac{n+1}{2}\right\rfloor,\,n+1\right),
\end{split}
\]
and if $n$ is odd, then
\[
\begin{split}
    \left(\left\lfloor\frac{n+1}{2}\right\rfloor,\,2\left\lfloor\frac{n+1}{2}\right\rfloor\right)&=\left(\left\lfloor\frac{n+1}{2}\right\rfloor,\,n+1\right)\\
    \left(\left\lfloor\frac{n}{2}\right\rfloor,\,2\left\lfloor\frac{n}{2}\right\rfloor+1\right)&=\left(\left\lfloor\frac{n}{2}\right\rfloor,\,n\right).
\end{split}
\]
It is easy to see now that the pair of paths thus obtained for $\pi_o$ and $\pi_e$ are in bijection with the pair of paths in the proof of the \cite[Theorem 3.5]{Bur} (see also \cite[Figure 6]{Bur}), which yields a bijection between the
 subexcedent faro words of size $n$ and $2\mbox{-}1\mbox{-}4\mbox{-}3$-avoiding Dumont permutations of the second kind of size $2n$.
\end{proof}

The enumeration of  subexcedent faro words may be refined by considering some natural statistics on such words. Together with the bijection of Theorem \ref{prm-dum} to pairs of ternary paths (or $2$-Dyck paths), a recent result \cite{Bur20} lets us find several equidistributed statistics on the odd-position and even-position subsequences of subexcedent faro words.

Recall that a \emph{ternary} (or \emph{$2$-Dyck}) path is a sequence of unit steps $u=(1,1)$ and $d=(1,-2)$ starting at $(0,0)$ and staying in the first quadrant. A \emph{peak} of a $2$-Dyck path is an $ud$-block in that path, as well as the vertex between the two steps. Likewise, a \emph{double descent} of a $2$-Dyck path is a $dd$-block in that path, as well as the vertex between the two steps. Define the following statistics on $2$-Dyck paths:
\begin{itemize}
    \item $\pk_0$, the number of peaks at even height,
    \item $\pk_1$, the number of peaks at odd height,
    \item $\dd$, the number of double descents.
\end{itemize}

Then the following results hold.
\begin{thm}[\cite{Bur20}] \label{thm:peak-heights} ~\\[-\baselineskip]
\begin{itemize}
\item On $2$-Dyck paths ending at height $0$, the tristatistic $(\pk_0 - \m1,\pk_1,\dd)$ is jointly equidistributed with any of its permutations.

\item On $2$-Dyck paths ending at height $1$, the bistatistics $(\pk_0,\pk_1)$ and $(\pk_1,\pk_0)$ are jointly equidistributed.
\end{itemize}
\end{thm}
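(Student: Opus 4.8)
The plan is to derive both statements from the right recursive decomposition of $2$-Dyck paths, read as a pair of functional equations. Let $x$ record the number of $d$-steps, and let $a,b,c$ record $\pk_0,\pk_1,\dd$ respectively. Write $D=D(x;a,b,c)$ for the generating function of $2$-Dyck paths ending at height $0$, $D_1=D_1(x;a,b,c)$ for those ending at height $1$, and $\bar D=D(x;b,a,c)$ for the series obtained from $D$ by swapping $a$ and $b$ (so $\bar D$ counts the same height-$0$ paths but with the numbers of even- and odd-height peaks exchanged in the weight). Two elementary facts underlie all the bookkeeping: a sub-path lying on the line $y=1$ has the parity of all its peak-heights reversed, and gluing an ``arch'' $u\,\widehat S\,d$ (the hat denoting a shift up by one) onto the base line creates exactly one extra peak at the even height $2$ when $S$ ends with $u$, and exactly one extra $dd$ when $S$ ends with $d$.

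Now decompose. A nonempty height-$0$ path is a concatenation of prime pieces $u\,\widehat S\,d$ with $S$ a height-$1$ path, and every gluing seam is of type $du$, contributing nothing; so $D=1+\Gamma D$ where $\Gamma$ is the generating function of prime pieces. A height-$1$ path is a sequence of arches $u\,\widehat S\,d$ (again $S$ a height-$1$ path) followed by a tail $u\,\widehat R$ (with $R$ a height-$0$ path), with all seams again of type $du$; an arch weighs exactly like a prime piece and the tail contributes $\bar D$, so $D_1=\bar D+\Gamma D_1$ with the same $\Gamma$. Therefore $D=1/(1-\Gamma)$ and $D_1=\bar D\cdot D$. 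Since $D_1=D(x;a,b,c)\,D(x;b,a,c)$ is visibly unchanged when $a$ and $b$ are swapped, $(\pk_0,\pk_1)$ and $(\pk_1,\pk_0)$ are equidistributed on $2$-Dyck paths ending at height $1$, which is the second bullet. (Here one also uses that a height-$1$ path ending with $u$ is exactly a height-$0$ path with an extra $u$ appended, a statistic-preserving bijection; this lets one write $\Gamma=x\bar D\bigl(a+c(D-1)\bigr)$ in closed form, but only the shape of the system is needed.)

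For the first bullet, note first that every nonempty height-$0$ path has $\pk_0\ge 1$: the $u$-step just before the final maximal run of $d$-steps lands at an even height and is immediately followed by a $d$, forming an even-height peak. Hence $a\mid(D-1)$; write $D=1+aH$, and correspondingly $\bar D=1+bH'$ for some series $H'$. Substituting into $D=1+x\bar D\bigl(a+c(D-1)\bigr)D$ and into its $a\leftrightarrow b$ image yields $H=x(1+aH)(1+bH')(1+cH)$ and $H'=x(1+aH)(1+bH')(1+cH')$; subtracting and cancelling the unit $1-xc(1+aH)(1+bH')$ forces $H=H'$, so that
\[
H \;=\; x\,(1+aH)(1+bH)(1+cH),
\]
an equation symmetric in $a,b,c$ with a unique formal power series solution. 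As $H=\sum x^{|P|/3}a^{\pk_0(P)-1}b^{\pk_1(P)}c^{\dd(P)}$ over nonempty height-$0$ paths, this is the first bullet. (Equivalently, $H$ enumerates ternary trees with at least one internal node by the numbers of internal left-, middle-, and right-children, whose equidistribution is obvious by relabelling the three child-slots; turning this into a bijection on $2$-Dyck paths would mean matching those tree-statistics to $(\pk_0-1,\pk_1,\dd)$, i.e. undoing the parity alternation, which is the fiddly part a purely bijective proof must confront.)

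The main obstacle throughout is exactly this accounting: establishing $D=1+\Gamma D$ and $D_1=\bar D+\Gamma D_1$ with the same $\Gamma$ requires correctly tracking the parity reversal of peak heights on shifted sub-paths and charging the extra peak or double descent created at each arch seam to the right variable (hence the split of the height-$1$ family by its last step and the identification of its $u$-part with the height-$0$ family). Once the system is set up correctly, the deductions $D_1=\bar D D$ and $D=1+aH\Rightarrow H=x(1+aH)(1+bH)(1+cH)$ are short, and it is there that the two claimed symmetries become manifest.
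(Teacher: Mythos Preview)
The paper does not prove this statement: it is quoted verbatim as a result of \cite{Bur20} and used as a black box in the subsequent corollary. There is therefore no proof in the paper to compare against.

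On its own merits, your generating-function argument is correct. The combinatorial inputs are all valid: the first-return decomposition gives $D=1/(1-\Gamma)$ and $D_1=\bar D/(1-\Gamma)$ with the \emph{same} arch series $\Gamma$ (since in both cases an arch is $u\widehat S d$ with $S$ a height-$1$ path, and all seams are $du$); shifting a subpath up by one swaps the parity of every peak height and hence the roles of $a$ and $b$; and every nonempty height-$0$ path has an even-height peak because the $u$ preceding the terminal run $d^k$ sits at height $2k$. The algebra then goes through: $D_1=\bar D D$ gives the second bullet, and writing $D=1+aH$, $\bar D=1+bH'$ leads to $H=x(1+aH)(1+bH')(1+cH)$ and $H'=x(1+aH)(1+bH')(1+cH')$, whose difference forces $H=H'$ since $1-xc(1+aH)(1+bH')$ is a unit, whence $H=x(1+aH)(1+bH)(1+cH)$ is symmetric in $a,b,c$. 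One expository point worth tightening: you display the closed form $\Gamma=x\bar D(a+c(D-1))$ before establishing $D_1=\bar D D$, but that closed form already presupposes $\bar D_1=\bar D D$. The logic is not circular because, as you note, only the identity of the two $\Gamma$'s is needed to get $D_1=\bar D D$, and the closed form for $\Gamma$ is only required afterwards for the first bullet; it would read more cleanly if you stated the closed form only after step~3.
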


For a subexcedent faro word $\pi$ of length $2n$, define the following statistics on its odd-position and even-position subsequences $\pi_o$ and $\pi_e$:
\begin{itemize}

\item $\eOdis(\pi)$, the number of distinct \emph{positive} even letters in $\pi_o$ (we exclude $0$ since $\pi_o$ and $\pi$ always start with $0$);

\item $\oOdis(\pi)$, the number of distinct odd letters in $\pi_o$;

\item $\aOrpt(\pi)=\{i\in[n-1]\,\mid\,\pi(2i-1)=\pi(2i+1)\}$, the number of letter repetitions in $\pi_o$ (the ``a'' in $\aOrpt$ stands for ``any parity'');

\item $\eEdis(\pi)$, the number of distinct even letters in $\pi_e$;

\item $\oEdis(\pi)$, the number of distinct odd letters in $\pi_e$.

\end{itemize}

Then we have the following result.
\begin{thm} \label{thm:dist-rpt-stats}
On subexcedant faro words of length $n$,
\begin{itemize}
\item the tristatistic $(\eOdis,\oOdis,\aOrpt)$ is jointly equidistributed with any of its permutations.

\item the bistatistics $(\eEdis,\oEdis)$ and $(\oEdis,\eEdis)$ are jointly equidistributed.
\end{itemize}
\end{thm}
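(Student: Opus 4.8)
The plan is to transport everything through the bijection of Theorem~\ref{prm-dum} to the world of 2-Dyck paths, where Theorem~\ref{thm:peak-heights} already provides the desired equidistributions, and then to match up the five statistics on subexcedent faro words with the statistics $\pk_0,\pk_1,\dd$ on the two component paths. Concretely, a subexcedent faro word $\pi$ of length $n$ is encoded by the pair $(\pi_o,\pi_e)$ of nondecreasing sequences of \eqref{eq:subexc-even-odd}, and the proof of Theorem~\ref{prm-dum} already turns $\pi_o$ into a north-east lattice path weakly below $y=2x$ and $\pi_e$ into one weakly below $y=2x+1$. After the standard $45^\circ$ reparametrization (replace each $N$ by an up step $u=(1,1)$ and each $E$ by a down step $d=(1,-2)$, reading the path from the terminal corner), the $\pi_o$-path becomes a 2-Dyck path $P_o$ ending at height $0$ (when $n$ is even; height differs by the obvious parity shift otherwise) and the $\pi_e$-path becomes a 2-Dyck path $P_e$ ending at height $1$. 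So first I would fix this dictionary carefully, being explicit about which endpoint heights arise for $n$ even versus odd, using the two displayed parity computations in the proof of Theorem~\ref{prm-dum}.

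The second step is to read off what $\eOdis,\oOdis,\aOrpt$ (resp.\ $\eEdis,\oEdis$) become under this dictionary. In the path \eqref{eq:subexc-path} the exponents $a_{i+1}-a_i$ are the numbers of consecutive north steps between successive east steps; an east step $E$ flanked on its left by a north step and on its right by a north step corresponds precisely to a vertex that is \emph{not} a peak, while a maximal run $N^{a_{i+1}-a_i}E$ with $a_{i+1}>a_i$ followed by $N^{a_{i+2}-a_{i+1}}$ with $a_{i+2}>a_{i+1}$ forces each intermediate height value in $(a_i,a_{i+1}]$ to be attained exactly once. The key combinatorial observation is: a value $v\ge 1$ occurs in the nondecreasing word $\pi_o$ if and only if the corresponding 2-Dyck path $P_o$ has a peak (a $ud$-block) at height $v$; and similarly a repeated value $\pi(2i-1)=\pi(2i+1)$ corresponds to two consecutive east steps, i.e.\ a $dd$-block, i.e.\ a double descent. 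Thus $\eOdis(\pi)=\pk_0(P_o)$, $\oOdis(\pi)=\pk_1(P_o)$, $\aOrpt(\pi)=\dd(P_o)$ — up to the $-\m1$ normalization coming from the fact that $\pi_o$ always starts with $0$, which is exactly the ``$\pk_0-\m1$'' in the first bullet of Theorem~\ref{thm:peak-heights}. The same reading for the even-position path gives $\eEdis(\pi)=\pk_0(P_e)$ and $\oEdis(\pi)=\pk_1(P_e)$, with $P_e$ ending at height $1$.

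With the dictionary established, the theorem follows immediately: the first bullet of Theorem~\ref{thm:peak-heights} says $(\pk_0-\m1,\pk_1,\dd)$ on 2-Dyck paths ending at height $0$ is equidistributed with all its permutations, which translates verbatim to $(\eOdis,\oOdis,\aOrpt)$ being equidistributed with all its permutations; and the second bullet, for paths ending at height $1$, gives the $(\eEdis,\oEdis)\sim(\oEdis,\eEdis)$ statement. One technical wrinkle to handle is the parity of $n$: the component carrying the ``height-$0$'' path is $\pi_o$ when $n$ is even but $\pi_e$ when $n$ is odd (and vice versa for height $1$), as the displayed identities in the proof of Theorem~\ref{prm-dum} show; one should simply note that the tristatistic always lives on whichever of $\pi_o,\pi_e$ maps to the height-$0$ path, and the statistic names $\eOdis,\oOdis,\aOrpt$ were defined on $\pi_o$ precisely because the even-$n$ case (length $2n$ in the even-$n$ normalization) is the primary case of interest — so I would either restrict to that normalization or spell out the trivial relabeling.

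\textbf{Main obstacle.} The only genuinely delicate point is the third paragraph's claimed correspondence ``value $v$ occurs in $\pi_o$ $\iff$ $P_o$ has a peak at height $v$'' together with ``repeat $\iff$ double descent'': one must verify that the $N$/$E$-to-$u$/$d$ translation, combined with reading the path from the correct end, sends \emph{maximal} north-runs and \emph{maximal} east-runs to peaks and double descents at the correct heights, and that the endpoint-height bookkeeping (the $2k$ vs.\ $2k+1$ padding values $a_{k+1}$ in the proof of Theorem~\ref{prm-dum}, and the two parity cases) is consistent with ``ending at height $0$'' vs.\ ``height $1$'' in Theorem~\ref{thm:peak-heights}. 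Everything else is bookkeeping; once the dictionary is pinned down, no further computation is needed.
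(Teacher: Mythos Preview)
Your overall strategy is exactly the paper's: map $(\pi_o,\pi_e)$ to a pair of 2-Dyck paths via the north-east paths of \eqref{eq:subexc-path}, reverse and replace $N\mapsto u$, $E\mapsto d$, and invoke Theorem~\ref{thm:peak-heights}. Two details in your dictionary are off, however, and the second one is precisely the point you flag as the main obstacle.

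First, the ``parity of $n$'' wrinkle is a red herring. With $a_{k+1}=2k$ for $\pi_o$ and $a_{k+1}=2k+1$ for $\pi_e$, the path $\phi(P)$ has $a_{k+1}$ up steps and $k$ down steps, hence ends at height $a_{k+1}-2k$, which is $0$ for $\pi_o$ and $1$ for $\pi_e$ \emph{regardless of the parity of $n$}. The displayed identities in the proof of Theorem~\ref{prm-dum} concern the endpoints of the north-east paths before the $\phi$-transformation and are irrelevant here.

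Second, and more substantively, your claim ``value $v$ occurs in $\pi_o$ $\iff$ $P_o$ has a peak at height $v$'' is false as stated. If $\ell$ is a distinct letter with rightmost occurrence at position $j$, the corresponding peak in $\phi(P)$ sits at height $(a_{k+1}-\ell)-2(k-j)$, not at height $\ell$. What \emph{is} true, and what the paper actually uses, is that this height has the same parity as $a_{k+1}+\ell$. Since $a_{k+1}=2k$ is even for $\pi_o$, the peak-height parity agrees with the parity of $\ell$, and the mandatory letter $0$ contributes one extra even-height peak; this yields $(\eOdis,\oOdis,\aOrpt)=(\pk_0-\m1,\pk_1,\dd)$ on $\phi(P_o)$, as you want. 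For $\pi_e$, however, $a_{k+1}=2k+1$ is odd, so the parities are \emph{swapped}: one gets $(\eEdis,\oEdis)=(\pk_1,\pk_0)$ on $\phi(P_e)$, not $(\pk_0,\pk_1)$. This does not damage the conclusion because Theorem~\ref{thm:peak-heights} already asserts the symmetry, but your stated identification is incorrect and would need the paper's actual height computation to repair.
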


\begin{proof}
For each of $\pi_o$ and $\pi_e$, define $k$ and $a_1,a_2,\dots,a_k,a_k+1$ as in the proof of Theorem \ref{prm-dum}, and let
\[
P=N^{a_1},E,N^{a_2-a_1},E,N^{a_3-a_2},E,\dots,E,N^{a_{k+1}-a_k}
\]
be the corresponding north-east path as in \eqref{eq:subexc-path} (when needed, we will distinguish the paths obtained from $\pi_o$ and $\pi_e$ as $P_o$ and $P_e$, respectively). Map $P$ to a lattice path obtained by reversing $P$ and mapping unit steps $N\mapsto u=(1,1)$ and $E\mapsto d=(1,-2)$. In other words, consider the map
\[
\phi: P\mapsto\phi(P)=u^{a_{k+1}-a_k},d,u^{a_k-a_{k-1}},d,\dots,d,u^{a_2-a_1},d,u^{a_1},
\]
where $\phi(P)$ starts at $(0,0)$. Recall that $P$ starts at $(0,0)$, stays in the first quadrant on or below $y=2x$ for $\pi_o$ and $y=2x+1$ for $\pi_e$, and ends on $y=2x$ for $\pi_o$ and $y=2x+1$ for $\pi_e$. Therefore, it is easy to see that $\phi(P)$ stays in the first quadrant and ends at height $0$ for $\pi_o$ and at height $1$ for $\pi_e$. Moreover, each distinct letter of $\pi_o$ or $\pi_e$ (except for $0$ in $\pi_o$) corresponds to a block $EN$ in the corresponding path $P$, which in turn corresponds to a block $ud$ of $\phi(P)$, i.e. to a peak of $\phi(P)$.

Furthermore, a repetition of a letter in positions $i$ and $i+1$ of $\pi_o$ means that $a_{i+1}=a_{i}$, and thus the $i$-th and $(i+1)$-st steps $E$ in $P$ are adjacent, which in turn corresponds to a block $dd$ in $\phi(P)$. Therefore, $\aOrpt(\pi)=\dd(P_o)$.

Let $\ell$ be one of distinct letters of in $\pi_o$ or $\pi_e$ (for $\pi_o$, also assume $\ell>0$). Suppose its rightmost occurrence is in position $j$. Then there are $k+1-\ell$ east steps and $a_{k+1}-a_{\ell}$ north steps in path $P$ to the right of that point, so the height of the corresponding peak in $\phi(P)$ is 
\[
a_{k+1}-a_{\ell}-2(k+1-\ell) \equiv a_{k+1}-a_{\ell}\!\!\!\pmod 2 \equiv a_\ell\!\!\!\pmod 2 + a_{k+1}\!\!\!\pmod 2.
\]
It follows that, on $\pi_o$ $(\eOdis,\oOdis)(\pi)=(\pk_0-1,\pk_1)(\phi(P_o))$ if $a_{k+1}$ is even, and $(\eOdis,\oOdis)(\pi)=(\pk_1,\pk_0-1)(\phi(P_o))$ if $a_{k+1}$ is odd. Likewise, $(\eEdis,\oEdis)(\pi)=(\pk_0,\pk_1)(\phi(P_e))$ if $a_{k+1}$ is even, and $(\eEdis,\oEdis)(\pi)=(\pk_1,\pk_0)(\phi(P_e))$ if $a_{k+1}$ is odd. However, the two statistics on the right-hand side of the equations are jointly equidistributed in each case by Theorem \ref{thm:peak-heights}, and thus the parity of $a_{k+1}$ is immaterial in each case.
\end{proof}

From Corollary 1.12 and Equation (2.7) of \cite{Bur20}, we can also determine the joint distribution of all the statistics we defined on subexcedent faro words. For this result, we let $n_o=\left\lfloor\frac{n+1}{2}\right\rfloor$ and $n_e=\left\lfloor\frac{n}{2}\right\rfloor$ (so $n_o+n_e=n$). We also let $\aErpt(\pi)$ be the number of letter repetitions in $\pi_e$, i.e. $\aErpt(\pi)=\{i\in[n-1]\,\mid\,\pi(2i)=\pi(2i+2)\}$.

\begin{cor}
The number of subexcedent faro words $\pi$ of length $n$ such that
\[
(\eOdis,\oOdis,\aOrpt,\eEdis,\oEdis,\aErpt)(\pi)=(r_1,r_2,r_2,r_4,r_5,r_6)
\]
is
\[
\frac{1}{n_o}\binom{n_o}{r_1}\binom{n_o}{r_2}\binom{n_o}{r_3}\frac{r_4+r_5}{n_e(n_e+1)}\binom{n_e+1}{r_4}\binom{n_e+1}{r_5}\binom{n_e}{r_6}.
\]
\end{cor}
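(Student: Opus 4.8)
The plan is to split $\pi$ into its odd- and even-position subsequences $\pi_o,\pi_e$, reduce each resulting factor to a refined enumeration of $2$-Dyck paths via the encoding already built in the proof of Theorem~\ref{thm:dist-rpt-stats}, and then quote the explicit formulas of \cite{Bur20}. (We read the left-hand index of the Corollary as $(r_1,r_2,r_3,r_4,r_5,r_6)$; the repeated $r_2$ there is a typo, as the factor $\binom{n_o}{r_3}$ makes plain.) First I would observe that $\pi\mapsto(\pi_o,\pi_e)$ is a bijection from the subexcedent faro words of length $n$ onto the pairs of weakly increasing sequences constrained by \eqref{eq:subexc-even-odd}, and that $\eOdis,\oOdis,\aOrpt$ depend only on $\pi_o$ while $\eEdis,\oEdis,\aErpt$ depend only on $\pi_e$. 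Hence the number sought factors as $N_o(r_1,r_2,r_3)\cdot N_e(r_4,r_5,r_6)$, where $N_o$ (resp.\ $N_e$) is the number of admissible $\pi_o$ (resp.\ $\pi_e$) realizing the prescribed triple of values.

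Next I would invoke the maps $\pi_o\mapsto\phi(P_o)$ and $\pi_e\mapsto\phi(P_e)$ from the proof of Theorem~\ref{thm:dist-rpt-stats}. With $k=n_o$ one has $a_{k+1}=2n_o$, and $\phi(P_o)$ ranges bijectively over the $2$-Dyck paths with exactly $n_o$ down steps ending at height $0$; with $k=n_e$ one has $a_{k+1}=2n_e+1$, and $\phi(P_e)$ ranges bijectively over the $2$-Dyck paths with exactly $n_e$ down steps ending at height $1$. That same proof gives $(\eOdis,\oOdis,\aOrpt)(\pi)=(\pk_0-\m1,\pk_1,\dd)(\phi(P_o))$, together with $\{\eEdis,\oEdis\}(\pi)=\{\pk_0,\pk_1\}(\phi(P_e))$ and $\aErpt(\pi)=\dd(\phi(P_e))$ (on the even side the two orderings of $\pk_0,\pk_1$ are interchangeable by Theorem~\ref{thm:peak-heights}). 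Consequently $N_o(r_1,r_2,r_3)$ equals the number of $2$-Dyck paths with $n_o$ down steps ending at height $0$ and $(\pk_0-\m1,\pk_1,\dd)=(r_1,r_2,r_3)$, and $N_e(r_4,r_5,r_6)$ equals the number of $2$-Dyck paths with $n_e$ down steps ending at height $1$ and $(\pk_0,\pk_1,\dd)=(r_4,r_5,r_6)$.

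Finally I would quote Corollary~1.12 and Equation~(2.7) of \cite{Bur20}: the number of $2$-Dyck paths with $m$ down steps ending at height $0$ and $(\pk_0-\m1,\pk_1,\dd)=(s,t,d)$ is $\tfrac1m\binom{m}{s}\binom{m}{t}\binom{m}{d}$, while the number ending at height $1$ and $(\pk_0,\pk_1,\dd)=(s,t,d)$ is $\tfrac{s+t}{m(m+1)}\binom{m+1}{s}\binom{m+1}{t}\binom{m}{d}$. Setting $m=n_o$ and $(s,t,d)=(r_1,r_2,r_3)$ gives $N_o(r_1,r_2,r_3)=\tfrac1{n_o}\binom{n_o}{r_1}\binom{n_o}{r_2}\binom{n_o}{r_3}$, and setting $m=n_e$ and $(s,t,d)=(r_4,r_5,r_6)$ gives $N_e(r_4,r_5,r_6)=\tfrac{r_4+r_5}{n_e(n_e+1)}\binom{n_e+1}{r_4}\binom{n_e+1}{r_5}\binom{n_e}{r_6}$, the order of $r_4,r_5$ being immaterial since this expression is symmetric in them. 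Multiplying $N_o$ by $N_e$ gives the asserted formula.

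The step needing genuine care is the bookkeeping of normalizations: verifying that the $\phi$-encoding of the length-$n_o$ (resp.\ length-$n_e$) admissible sequences lands precisely on the $2$-Dyck paths with $n_o$ (resp.\ $n_e$) down steps ending at height $0$ (resp.\ $1$), and that $\pk_0$ enters shifted by $\m1$ on the height-$0$ side in accordance with the convention of Theorem~\ref{thm:peak-heights}. Both facts are already contained in (or follow immediately from) the proof of Theorem~\ref{thm:dist-rpt-stats}, so I expect no fresh combinatorial obstacle — the residual work is just substituting indices into the formulas of \cite{Bur20}.
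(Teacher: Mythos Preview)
Your proposal is correct and follows exactly the route the paper indicates: the paper itself does not give a formal proof of this corollary but simply points to Corollary~1.12 and Equation~(2.7) of \cite{Bur20} together with the encoding from the proof of Theorem~\ref{thm:dist-rpt-stats}, and you have spelled out precisely that derivation. Your handling of the normalizations (the height-$0$/height-$1$ endpoints, the $\pk_0-\m1$ shift on the odd side, and the symmetry in $r_4,r_5$ absorbing the swap of $\pk_0,\pk_1$ on the even side) matches the paper's setup.
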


Note also that $r_1+r_2+r_3=n_o-1$ and $r_4+r_5+r_6=n_e$.

\section{Acknowledgments} We would like to greatly thank anonymous referees for their helpful comments and suggestions.

%\section{Further directions}

%In this paper, we give multivariate generating functions for the
%  distribution and popularity of some consecutive patterns on the set
%  of $k$-ary faro words and permutations of a given length. The
%  method consists in giving a bijection between these objects and
%  dispersed Dyck paths in order to study patterns in the more
%  suitable context of lattice paths. Can we apply the general method
%  developed by Banderier et al. in~\cite{Band2, Band1} in order to
%  obtain the pattern distributions in dispersed paths? Except for
%  some basic cases, we have not succeeded in providing closed forms
%  for the diagonals $k=n$. Can we prove that these diagonals are
%  D-finite?  Here we focus on consecutive patterns, but can we obtain
%  similar results for classical patterns of length at most three?
%  More generally, it would be interesting to study the faro shuffle
%  of two words that are not necessarily non-decreasing, and the faro
%  shuffle of several words.  Note that Dumont permutations of the
%  second and fourth kinds are examples of faro shuffles of two not
%  necessarily nondecreasing words (with some conditions on the even
%  and odd words).

\bibliographystyle{plain}
\bibliography{bib}
\end{document}